\topskip \setlength{\parindent}{0pt} \setlength{\parskip}{5pt plus
\def\FF{\mathcal{F}}
\begin{document}
\newtheorem{theorem}{Theorem}
\newtheorem{definition}[theorem]{Definition}
\newtheorem{lemma}[theorem]{Lemma}
\newtheorem{proposition}[theorem]{Proposition}
\newtheorem{corollary}[theorem]{Corollary}
\newtheorem{example}[theorem]{Example}
\title{The components of the singular locus of a component of a Springer fiber over $x^2=0$}
\author{Ronit Mansour\footnote{Supported by ISF grant 797/14} and Anna Melnikov}
\date{}
\maketitle
\begin{center}
Department of Mathematics, University of Haifa,  3498838  Haifa, Israel\\[4pt]
{\it hadilatil@gmail.com, anna.melnikov@gmail.com}
\end{center}
\begin{abstract}
For $x\in End(\mathbb K^n)$ satisfying $x^2=0$ let $\FF_x$ be the variety of full
flags stable under the action of $x$ (Springer fiber over $x$). The full classification of the components of $\FF_x$ according to their smoothness was provided in \cite{FM2009} in terms of both Young tableaux and link patterns. Moreover in \cite{F2012} the purely combinatorial algorithm to compute the singular locus of a singular components of $\FF_x$ is provided. However this algorithm involves the computation of the graph of the component, and the complexity of computations grows very quickly, so that in practice it is impossible to use it. In this paper, we construct another algorithm, derived from the algorithm of Fresse \cite{F2012}, providing all the components of the singular locus of a singular component of $\FF_x$ in terms of link patterns constructed straightforwardly from its link pattern.

\end{abstract}

\section{Introduction}
Throughout this paper, we set $\mathbb{K}$\index{$\mathbb{K}$, Algebraically closed field} to be an algebraically closed field of arbitrary characteristic. We set $V$\index{$V$, Vector space of finite dimension} to be a vector space of finite dimension $n$. A {\bf complete flag}\index{Complete flag} of $V$ is a chain $V_0\subset V_1\subset\cdots\subset V_n$ of subspaces of $V$ with $dim V_i=i$ for all $i=0,1,\ldots,n$. We denote the set of all the complete flags by $\mathcal{F}$.\index{$\mathcal{F}$, Set of all the complete flags}

\subsection{Springer fibers and their components}
Let $x$ be a nilpotent endomorphism of $V$.\index{$x$, Nilpotent endomorphism} A {\bf Springer fiber}\index{Springer iber} $\mathcal{F}_x$\index{$\mathcal{F}_x$, Springer fiber} is a subset of $x$-stable complete flags, that is, flags $(V_0,V_1,\ldots,V_n)$ such that $x(V_i)\subset V_{i-1}$ for all $i=1,2,\ldots,n$. Clearly, $\mathcal{F}_x$ is a closed subvariety of $\mathcal{F}$ (see \cite{S1977}) and depends on the Jordan form of $x$ only.  Note that $\mathcal{F}_x$  is reducible unless $x$ is zero or regular.
Different aspects of Springer fibers were studied by many authors. In particular many aspects of the study of Springer fibers and its connection to Schubert varieties is described in  the survey \cite{Tym2017}.

The main objects of our interest are the irreducible components of a Springer fiber. We concentrate on the case of $x$ satisfying $x^2=0$. In this case the components are described in terms of link patterns. We provide the algorithm describing all components of the singular locus of a singular component in terms of admissible pairs of a link pattern.

\subsection{Parametrization of the irreducible components of $\mathcal{F}_x$ by Young tableaux}\label{parametr_of_comp}
A nilpotent  endomorphism $x:V\mapsto V$ has a unique eigenvalue 0, so its Jordan  form can be written as the list of lengths of its Jordan blocks and since Jordan form is unique up to the order of Jordan blocks this list can be viewed as a partition of $n$. Put $\lambda(x):=\lambda=(\lambda_1,\ldots,\lambda_r)\vdash n$\index{$\lambda$, Partition}\index{$\lambda\vdash n$, Partition of $n$} to be this partition. A {\bf Young diagram}\index{Young diagram} $Y(x)=Y_\lambda$\index{$Y(x)=Y_\lambda$, Young diagram} is an array of $r$ rows of boxes starting on the left, with the $i$-th row containing  $\lambda_i$ boxes. For $\lambda=(\lambda_1,\ldots,\lambda_l)\vdash n$ and $\mu=(\mu_1,\ldots,\mu_k)\vdash m$ where $m<n$ we say that $\mu$ is a subdiagram of $\lambda$ if $k\leq l$ and for each $i\ :\ 1\leq i\leq k$ one has $\mu_i\leq \lambda_i$.

Let $\lambda^*$\index{$\lambda$, Conjugate partition}  denote the {\bf conjugate partition}\index{Conjugate partition}  of $\lambda$ that is the list of the lengths of columns in $Y_\lambda$. By \cite[Sec. II, 5.5]{S1982}, the dimension of $\mathcal{F}_x$ is given by
$\dim\mathcal{F}_x=\sum_{\mu_i\in\lambda^*(x)}\binom{\mu_i}{2},$ and in particular for $\lambda=(2^k,1^{n-2k})$ one has $\dim\mathcal{F}=\binom{n-k}{2}+\binom{k}{2}.$

Fill the boxes of $Y(x)$ with numbers $1,2,\ldots,n$ in such a way that numbers increase from left to right in the rows and from top to bottom in the columns. Such an array is called a {\bf  standard Young  tableau of shape}\index{Standard Young  tableau} $Y(x)$. We call it a {\bf tableau} in what follows. We denote
the set of all standard tableaux of shape $Y_\lambda$ by $Tab_\lambda$.\index{$Tab_\lambda$, Set of all standard tableaux of shape $Y_\lambda$}  Given  $T\in Tab_\lambda$ where $\lambda\vdash n$, put $T_{\{i\}}$ to be a subtableau of $T$ containing the entries $1,2,\ldots,i$, and respectively put $Y_i(T)$ to be its shape.\index{$T_{\{i\}}$, Subtableau of $T$ containing the entries $1$, $2$, $\ldots$, $i$}\index{$Y_i(T)$, Shape of the subtableau of $T$ containing the entries $1$, $2$, $\ldots$, $i$}

The  components of $\mathcal{F}_x$ are parametrized by standard tableaux of shape $Y(x)$, and we provide one of the possible ways of parametrization, following \cite{S1982}. Let $F=(V_0,\ldots,V_n)\in\mathcal{F}_x$, where $V_i$ is $x$-stable subspace of $V$. Thus, the restriction $x\mid_{V_i}$\index{$x\mid_{V_i}$, Restriction of the nipotent endomorphosim $x$} is a nilpotent endomorphism  and its Jordan form is represented by a shape $Y(x|_{V_i})$, which is a subdiagram of $Y(x|_{V_{i+1}})$ (and of $Y(x)$). Define\index{$\mathcal{V}_x^T$, Parittion of $\mathcal{F}_x$}
$$\mathcal{V}_x^T=\{(V_0,\ldots,V_n)\in\mathcal{F}_x\mid Y(x|_{V_i})=Y_i(T),\,i=1,2,\ldots,n\},$$
which is a partition of $\mathcal{F}_x$, namely, $\mathcal{F}_x=\coprod_{T\in Tab_\lambda}\mathcal{V}_x^T$.
By \cite[Sec. II.5.4-5]{S1982}, the set $\mathcal{V}_x^T$ is a locally closed, irreducible subset of $\mathcal{F}_x$ and $\dim\mathcal{V}_x^T=\dim\mathcal{F}_x$. Define \index{$\mathcal{F}_T=\overline{\mathcal{V}_x^T}$, Closure in Zariski topology} $\mathcal{F}_T=\overline{\mathcal{V}_x^T}$ to be the closure in Zariski topology. Then $\{\mathcal{F}_T\}_{T\in Tab_\lambda}$ are all the irreducible components of $\mathcal{F}_x$.\index{$\{\mathcal{F}_T\}_{T\in Tab_\lambda}$, Irreducible components of $\mathcal{F}_x$} This parametrization carries a lot of information on the components.

\subsection{Smooth and singular components of $\mathcal F_x$ in general and in case $x^2=0$}\label{1.3} Each $\mathcal{F}_x$ has at least one smooth component. Moreover, as it is shown in \cite{FM2010} all the components of $\mathcal{F}_x$ are smooth if and only if
$\lambda=(\lambda_1,n-\lambda_1)$ or $\lambda=(\lambda_1,n-\lambda_1-1,1)$ or
$\lambda=(\lambda_1,1^{n-\lambda_1})$ (or $\lambda=(2^3)$).

In all other cases there is at least one singular component, but in general the classification of the components according to their smoothness is a very difficult problem which is beyond our means. However, the nice exception is the case $x^2=0$. Here a full  classification of smooth components exists.

In this case all the block in Jordan form of $x$ are of length at most 2. Moreover the number of blocks of length 2 is equal to ${\rm Rank}\, x$ so that the components of $\mathcal F_x$ are parameterized by Young tableaux of shape $(2^k,1^{n-2k})$ where $k={\rm Rank}\, x.$

For $T\in Tab_\lambda$ where $\lambda\vdash n$ and $i\ :\ 1\leq i\leq n$ let $col(i)$ be the number of column $i$ belongs to.\index{$col(i)$, Number of column $i$ belongs to}
Put
$$\tau^*(T):=\{i\ :\ col(i+1)>col(i)\}.$$\index{$\tau^*(T)$, Set $\{i\ :\ col(i+1)>col(i)\}$}

For $T\in Tab_{(2^k,1^{n-2k})}$ let $T_1$ denote the first column\index{$T_i$, $i$-th column}
of $T$ and $T_2$ denote the second column of $T$. In this case
$\tau^*(T)=\{i\in T_1 :\ i+1\in T_2\}$.\index{$\tau^*(T)$, Set $\{i\in T_1 :\ i+1\in T_2\}$}

For a (finite) set  $S$ put $|S|$ to be its cardinality.\index{$|S|$, Cardinality of $S$}

For $T\in Tab_{(2^k,1^{n-2k})}$ let $T_1=(1=a_1,\ldots, a_{n-k})$ and $T_2=(b_1,\ldots, b_k)$. Put
\begin{align}
\rho(T):=\left\{\begin{array}{ll} |\tau^*(T)|+2, &{\rm if}\ a_{n-k}=n\ {\rm and}\ b_i>2i\ \forall\ i\,:\, 1\leq i\leq k;\\
|\tau^*(T)|+1, &{\rm if\ either}\ (a_{n-k}=n\ {\rm and}\ \exists\ i\, :\,  b_i=2i)\ {\rm or}\\
&\quad\quad\quad (b_k=n\ {\rm and}\ b_i>2i\ \forall\  1\leq i\leq k-1 );\\
|\tau^*(T)|, &{\rm if}\ b_k=n\ {\rm and}\ \exists\ i\, :\, 1\leq
i\leq k-1\ {\rm s.t.}\ b_i=2i. \end{array}\right.\label{eqrho1}
\end{align}\index{$\rho(T)$}
For instance, if
$$T=\ytableausetup{mathmode}
 \begin{ytableau}
  1 & 4\\
  2 & 5\\
  3 &9\\
  6&10\\
  7\\
  8\\
 \end{ytableau},$$
then $\tau^*(T)=\{3,8\}$, so that $|\tau^*(T)|=2$. Note that
$b_4=10$, $b_1=4>2$, $b_2=5>4$ and $b_3=9>6$. Hence,
$\rho(T)=|\tau^*(T)|+1=3$.

For $T\in Tab_{(2^k,1^{n-2k})}$ the simple criteria for $\mathcal{F}_T$ to be singular  is provided in terms of $\rho(T)$  by \cite[Theorem 1.2]{FM2009}:

\begin{theorem}\label{thm1}
For $T\in Tab_{(2^k,1^{n-k})}$  $\mathcal{F}_T$ is smooth if and only if $\rho(T)\leq 3$.
\end{theorem}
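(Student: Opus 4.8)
The plan is to pass to link patterns, reduce singularity of $\mathcal F_T$ to the appearance of a standard local singularity at a torus-fixed point, and show that this happens precisely when $\rho(T)\ge4$. Attach to $T$ the link pattern $\pi=\pi(T)$ by matching second-column entries of $T$ (``closers'') to first-column entries (``openers'') by the bracketing rule, the leftover first-column entries being fixed points; a short combinatorial check identifies $|\tau^*(T)|$ with the number of \emph{small arcs} $\{i,i+1\}$ of $\pi$ (if $i\in T_1$ and $i+1\in T_2$ the bracketing forces them to be paired) and shows that the three cases of \eqref{eqrho1} merely record two boundary features of $\pi$: whether $n$ is a fixed point (this is $a_{n-k}=n$) and whether some proper initial block $\{1,\dots,2i\}$ is a union of arcs of $\pi$ (this is $b_i=2i$). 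So $\rho(T)$ is the number of small arcs of $\pi$ corrected by $0$, $1$ or $2$, and the theorem reads: $\mathcal F_T$ is singular iff this corrected count is $\ge4$.

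Next comes the general apparatus. $\mathrm{Sing}(\mathcal F_T)$ is a closed subvariety of the projective variety $\mathcal F_T$, stable under a maximal torus $\mathbb T\subset GL(V)$ adapted to a Jordan basis of $x$; hence $\mathcal F_T$ is singular iff $\mathrm{Sing}(\mathcal F_T)$ contains a $\mathbb T$-fixed point. The $\mathbb T$-fixed points of $\mathcal F$ lying on $\mathcal F_T$ are the coordinate flags $F_w$ for $w$ in an explicit finite set read off from $T$, and since $\dim T_{F_w}\mathcal F_T\ge\dim\mathcal F_x=\binom{n-k}2+\binom k2$ always, $\mathcal F_T$ is smooth iff equality holds at every such $F_w$. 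In the standard affine chart of $\mathcal F$ around $F_w$ the defining conditions $Y(x|_{V_i})=Y_i(T)$ turn into explicit rank equations on the entries of a unipotent matrix, producing a concrete local model of $\mathcal F_T$ at $F_w$; equivalently, building the flag $V_1\subset V_2\subset\cdots$ one step at a time, the variety of admissible $V_i$ over a given $V_{i-1}$ is generically a projective space and degenerates only at the rank-jump indices $b_1,\dots,b_k$, which is where all the coordinates and all the singularities live.

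For ``$\rho(T)\ge4\Rightarrow$ singular'' I would first handle the minimal cases directly: when the corrected small-arc count equals $4$, $\pi$ contains as a sub-link-pattern one of a short list of configurations --- the basic one being two small arcs flanked by two fixed points, the link pattern of the singular component for $\lambda=(2,2,1,1)$ --- and for each of these the local model of the previous paragraph is, up to a smooth factor, the cone $\{u_1u_2=u_3u_4\}$, whose tangent space at the origin has dimension $\binom{n-k}2+\binom k2+1$. A propagation step then takes care of the rest: if $\pi$ is enlarged from the link pattern of a singular component by adding arcs or fixed points, $\mathcal F_T$ fibers over (or retracts onto) the smaller Springer-fiber component along maps preserving singularity, and every $\pi$ with corrected count $\ge4$ contains a minimal bad configuration. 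For ``$\rho(T)\le3\Rightarrow$ smooth'' I would induct on $n$: delete the largest entry $n$ from $T$ to get $T'$, check $\rho(T')\le\rho(T)\le3$ so that $\mathcal F_{T'}$ is smooth by induction, and realize $\mathcal F_T$ as a fibration over $\mathcal F_{T'}$ (for a suitably reduced tableau $T'$) whose fibers are projective spaces or smooth complete intersections --- the bound $\rho(T)\le3$ being exactly what forbids a conifold-type fiber.

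The main obstacle is the ``if'' direction. One cannot get away with ``$\rho(T)\le3$ means $\pi$ has few arcs, so $\mathcal F_T$ is simple'': a link pattern with $\rho(T)=3$ may carry arbitrarily many (non-small) arcs and $\mathcal F_T$ may be large and intricate, so smoothness at every $F_w$ must be pulled out of the explicit equations, and in particular one must check that the small would-be-singular local models (few small arcs, no boundary bonus) are genuinely nonsingular rather than cones. A secondary, more mechanical, difficulty is the reformulation of \eqref{eqrho1} itself: one must verify that the small-arc count and the two boundary features of $\pi$ are faithfully encoded by $|\tau^*(T)|$ and the case distinctions on $a_{n-k}=n$ and $b_i=2i$, and that $\rho$ is monotone under the deletion used in the induction. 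One may alternatively run the whole argument by identifying the local models with open pieces of Schubert varieties and invoking the Lakshmibai--Sandhya pattern criterion, a bad configuration of $\pi$ then corresponding to an occurrence of the pattern $4231$ or $3412$.
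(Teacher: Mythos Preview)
The paper does not prove Theorem~\ref{thm1}: it is quoted from \cite[Theorem~1.2]{FM2009} and used as input. So there is no proof here to compare your proposal against. What the present paper does supply is the link-pattern reformulation you begin with (the definition of $\rho(\sigma)$ in \S\ref{2.1} with $\rho(\sigma_T)=\rho(T)$), and the fibration statement (Proposition~\ref{prop_compl}, itself taken from \cite{FM2009}) that underlies your inductive step for the smooth direction.

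Two remarks on the proposal itself. First, your asserted local model is wrong in the minimal singular case. For $\lambda=(2^2,1^2)$ and $\sigma_T=(2,3)(4,5)$ the paper computes (end of \S\ref{secGG}) $\dim\mathcal F_T=7$ while $\dim\mathcal T_{F_\upsilon}(\mathcal F_T)=11$ at the singular orbit, so the tangent excess is $4$, not $1$; the singularity is not a quadric cone $\{u_1u_2=u_3u_4\}$ times a smooth factor. More generally, Proposition~\ref{prop1} gives $\dim\mathcal T_{F_\upsilon}(\mathcal F_\sigma)-\dim\mathcal F_\sigma=(k+1)(2r+2)+2\geq 4$ for the basic singular configurations, so any local model you write down must reflect that. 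Second, your induction for the smooth direction (``delete $n$, check $\rho(T')\le\rho(T)$, fiber over $\mathcal F_{T'}$'') is essentially the content of Proposition~\ref{prop_compl} when $n\in\sigma^0$ or when removing $n$ corresponds to undoing a maximal completion; but when $n$ is the right end of a non-extremal arc the restriction map is not the nice $\mathbb P^k$-bundle of that proposition, and you have not said what replaces it. The actual proof in \cite{FM2009} handles this by a more careful case analysis of how the outermost arcs sit, together with an explicit description of the fibres; your sketch correctly identifies this as the main obstacle but does not resolve it.
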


\subsection{The orbits of centralizer of $x$ for $x^2=0$ and link patterns}\label{seclinkp}
In order to study the components of $\FF_x$ we have to introduce $Z_x$-orbits.

Let  $GL(V)$ be the general linear group of $V$. Define $Z_x=\{g\in GL(V)\mid gxg^{-1}=x\}$\index{$Z_x$, Centralizer of $x$} to be the centralizer of $x$. So, for $g\in Z_x$ and $F\in\mathcal{F}_x$ one has $xg(F)=gx(F)$ that is $g(F)\in\mathcal{F}_x$ so that $Z_x$ naturally acts on the complete flags and leaves the Springer fiber $\mathcal{F}_x$ stable. However, $\mathcal{F}_x$ in general contains an infinite number of $Z_x$-orbits. The case $x^2=0$ is one of a 3 general cases where $\mathcal{F}_x$ is a union of finite number of $Z_x$-orbits (the other two cases are $\lambda=(\lambda_1,1^{n-\lambda_1})$ where all the components are smooth and $\lambda=(3,2^k,1^{n-3-2k})$ in which classification of the components according to their smoothness is far beyond our means).
%

For $x$ satisfying $x^2=0$  and ${\rm Rank}\, x=k$\  $Z_x$-orbits in $\mathcal{F}_x$ are labeled by involutions of symmetric group\index{$S_n$, Symmetric group of order $n$}\index{${\rm Rank}\,x$, Rank of $x$}
$S_n$  with $k$ disjoint 2-cycles as follows:

Let $I_n$ denote the set of all involutions in $S_n$,\index{$I_n$, Involutions in $S_n$} that is, $I_n=\{\sigma\in S_n\mid \sigma^2=id\}$. Let $I_{n,k}$\index{$I_{n,k}$, Involutions in $S_n$ with $k$ disjoint 2-cycles} denote the set of all involutions in $S_n$ with $k$ disjoint 2-cycles, that is
$$I_{n,k}=\{(i_1,j_1)\ldots(i_k,j_k)\in I_n\ :\ 1\leq i_s<j_s\leq n,\, \{i_s,j_s\}\cap\{i_t,j_t\}=\emptyset,\,1\leq s\ne t\leq k\}.$$

For $\sigma\in I_{n,k}$, a {\bf $\sigma$-basis}\index{$\sigma$-basis, Restriction of basis of $V$} $(v_1,\ldots,v_n)$ of $V$ is a basis of $V$ such that
$$x(v_i)=\left\{\begin{array}{ll} v_{\sigma(i)}&{\rm if}\ \sigma(i)<i;\\
0&{\rm otherwise}.\\
\end{array}\right.$$
For example, let $\{e_1,\ldots,e_5\}$ be a standard basis of $V$ and assume that $x$ with $\lambda(x)=(2,2,1)$ is defined by\index{$x(e_i)$}
$$x(e_i)=\left\{\begin{array}{ll}e_{i-1}&{\rm if}\ i=2,4;\\
0&{\rm otherwise}.\end{array}\right.$$
Let $\sigma=(1,5)(2,3)$ then some $\sigma$-bases are $\{e_1,e_3,e_4,e_5,e_2\},$ $\{e_3,e_1,e_2,e_5+e_3,e_4\},$ $\{e_1+e_3,e_1-2e_3,e_2-2e_4,e_5+e_1,e_2+e_4\}$ etc.

A {\bf $\sigma$-flag}\index{$\sigma$-flag, Complete flag} in $\mathcal{F}$ is a complete flag of the form $F=(Span\{ v_1,\ldots,v_{i}\}_{i=0}^n)$, where $(v_1,\ldots,v_n)$ is a $\sigma$-basis. We denote the set of all $\sigma$-flags by $\mathcal{Z}_\sigma$. Clearly, $\mathcal{Z}_\sigma\subset\mathcal{F}_x$ and it is $Z_x$-orbit of some $\sigma$-flag.
By \cite{F2012} for $x$ of square zero and ${\rm Rank}\, x=k$ one has
$$\mathcal F_x=\coprod\limits_{\sigma\in I_{n,k}}\mathcal Z_\sigma.$$

Since by the result above each $\mathcal F_T$ is the union of finite number of $Z_x$-orbits, in particular there exists a unique $\sigma_T\in I_{n,k}$ such that\index{$\sigma_T$, Involution}
$\mathcal F_T=\overline{\mathcal Z}_{\sigma_T}$. We use the combinatorics of $I_{n,k}$ in order to describe singular locus of singular $\mathcal F_T$ in terms of $\mathcal Z_\sigma$.\index{$\mathcal Z_\sigma$} A general algorithm for computing the singular locus is provided in \cite{F2012}. To do this Fresse constructs a  graph $G_T$,\index{$G_T$, Graph of Fresse} its vertexes are  $\{\sigma\in I_{n,k}\ :\ \mathcal Z_\sigma\subset \mathcal F_T\}$ (the combinatorial description of vertexes of $G_T$ is known (cf. \ref{2.1} for details)) and an edge connects two vertexes if one of them is obtained from another by so called  elementary move (also defined combinatorially (cf. \ref{secGG}). By \cite{F2012}
$\mathcal Z_\omega$ is in the singular locus of $\mathcal F_T$ if and only if in $G_T$ the number of edges $\{\omega,\omega'\}\in G_T$  is bigger than the number of edges $\{\sigma_T,\upsilon\}\in G_T$. As one can see at Figure \ref{figgg1} already for the first singular component (case $\lambda=(2^2,1^2)$) graph $G_T$ is big and complex.

\subsection{The algorithm for the components of a singular locus of $\FF_T$} In the paper we construct another algorithm, based on admissible pairs of $\sigma_T$ on
an interval. We do not use $G_T$ and construct all the $\omega_i$ such that $\{\mathcal Z_{\omega_i}\}_{i=1}^s$ are the components of the singular locus of $\FF_T$ straightforwardly from $\sigma_T$. For each pair of minimal arcs $(i,i+1),(j,j+1)$ in $\sigma_T$ satisfying special conditions we construct a component of the singular locus. Since we need more notation in order to explain the algorithm, the exact formula is provided in  \S\ref{maintheorem}.
Using this algorithm we show in particular that singular locus of $\FF_T$ is irreducible iff $\rho(T)=4$. In the subsequent paper we construct a natural stratification of $\FF_T$ with $\rho(T)=4$ and show that if $\rho(T)>5$ there is no well defined stratification.
\medskip

The structure of the paper is as follows. In \S 2 we provide all the notation and facts needed in what follows in order to make the paper self contained. Then is \S 3
we formulate the main theorem and explain its proof. In \S 4 we provide all the technical details of the proof. Finally in \S 5 we deduce the first results following from our algorithm. The list of notation is provided at the end of the paper.

\section{Preliminaries}
\subsection{Link patterns}\label{2.1}
For $\sigma=(i_1,j_1)\ldots(i_k,j_k)\in I_{n,k}$, its {\bf link pattern}\index{Link pattern} $P_\sigma$\index{$P_\sigma$\, Link pattern of $\sigma$}  is a graph with vertices $1,2,\ldots,n$ on a horizontal line and $k$ edges (arcs) connecting
$i_s$ and $j_s$, for all $s=1,2,\ldots,k$. From now and on, we identify the involution $\sigma$ with its link pattern $P_\sigma$ and write $\sigma$ for both of them. We set $|\sigma|=k$ to be the number of arcs in $\sigma$.

For  $\sigma\in I_{n,k}$, the point $i$, $1\leq i\leq n$, is called a {\bf fixed point}  if $\sigma(i)=i$ (the degree of the vertex $i$ in $\sigma$ is zero), and an
{\bf end point}  if $\sigma(i)\neq i$ (the degree of the vertex $i$ in $\sigma$ is one). In the later case, if $\sigma(i)>i$, $i$ is called a {\bf left end point} and if $\sigma(i)<i$ it is called a {\bf right end point}. We denote the set of all fixed points, left end points and right end points of $\sigma$, by $\sigma^0$, $\sigma^\ell$ and $\sigma^r$,\index{$\sigma^0$, Set of the fixed points of $\sigma$}\index{$\sigma^\ell$, Set of the left end points of $\sigma$}\index{$\sigma^r$, Set of the right end points of $\sigma$} respectively, that is,
$\sigma^0=\{i:i=\sigma(i)\}$, $\sigma^\ell=\{i:i<\sigma(i)\}$ and  $\sigma^r=\{i:i>\sigma(i)\}$.
We say $(i,j)\in\sigma$ if $i<j$ and $\sigma(i)=j$. For example, Figure \ref{figlp1} presents the link pattern $\sigma=(2,6)(3,8)(4,5)\in I_{9,3}$. Note that $1,7,9\in\sigma^0$, $2,3,4\in\sigma^{\ell}$ and $5,6,8\in\sigma^r$.
\begin{figure}[htp]
\begin{center}
\begin{pspicture}(3.3,.7)
\pscircle*(0,0){0.05}\pscircle*(.4,0){0.05}\pscircle*(.8,0){0.05}
\pscircle*(1.2,0){0.05}\pscircle*(1.6,0){0.05}\pscircle*(2,0){0.05}\pscircle*(2.4,0){0.05}
\pscircle*(2.8,0){0.05}\pscircle*(3.2,0){0.05}
\put(-.07,-.27){\scriptsize$1$}\put(.33,-.27){\scriptsize$2$}\put(.73,-.27){\scriptsize$3$}
\put(1.13,-.27){\scriptsize$4$}\put(1.53,-.27){\scriptsize$5$}\put(1.93,-.27){\scriptsize$6$}
\put(2.33,-.27){\scriptsize$7$}\put(2.73,-.27){\scriptsize$8$}\put(3.13,-.27){\scriptsize$9$}
\pscurve(.4,0)(1.2,.6)(2,0)\pscurve(.8,0)(1.9,.6)(2.8,0)
\pscurve(1.2,0)(1.4,.3)(1.6,0)
\end{pspicture}
\caption{The graph of $(2,6)(3,8)(4,5)\in I_{9,3}$}\label{figlp1}
\end{center}
\end{figure}
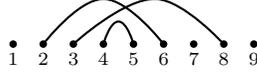

Let $\sigma$ be any link pattern. We say that $\sigma$  has a {\bf crossing}  if there is a pair of arcs $(i,j),(i',j')\in\sigma$ such that
$i<i'<j<j'$, we say that $(i',j')$ {\bf crosses} $(i,j)$ {\bf on the right} and $(i,j)$ {\bf crosses} $(i',j')$ {\bf on the left} in that case. For $(i,j)\in\sigma$,
let $c_{\sigma}^r((i,j)):=|\{(i',j')\in\sigma\ :\ i<i'<j<j'\}|$\index{$c_{\sigma}^r((i,j))$, Number of arcs crossing $(i,j)$ on the right} (that is  the number of arcs crossing $(i,j)$ on the right) and respectively
$c_{\sigma}^l((i,j)):=|\{(i',j')\in\sigma\ :\ i'<i<j'<j\}|$\index{$c_{\sigma}^l((i,j))$, Number of arcs crossing $(i,j)$ on the left} (that is the number of arcs crossing $(i,j)$ on the left). We put $c(\sigma)$ to be the total number of crossings in $\sigma$.\index{$c(\sigma))$, Number of total of crossings in $\sigma$} Obviously,
$$c(\sigma)=\sum\limits_{(i,j)\in\sigma}c_\sigma^r(i,j)=\sum\limits_{(i,j)\in\sigma}c_\sigma^l(i,j).$$
For instance, if $\sigma=(2,6)(3,8)(4,5)\in I_{9,3}$, see Figure \ref{figlp1}, then $c(\sigma)=1$, namely, the arc $(2,6)$ crosses the arc $(3,8)$.

For $a<b\in\mathbb N$ let $[a,b]:=\{i\}_{i\in \mathbb N,\ a\leq i\leq b}$ denote the set of integer points of interval $[a,b].$\index{$[a,b]$, Set of integer points of interval $[a,b]$}

We call the arc $(i,j)\in\sigma$ a {\bf bridge} over  $p\in\sigma^0$ if $i<p<j$. For $(i,j)\in\sigma$, put $b_\sigma((i,j)):=|\sigma^0\cap[i,j]|$ (that is number of $p\in\sigma^0$ such that $(i,j)$ is a bridge over them).\index{$b_\sigma((i,j))$, Number of fixed points below arc $(i,j)$}  Set $b(\sigma)$ to be the sum of number of bridges over all fixed points.\index{$b(\sigma)$, Sum of number of bridges over all fixed points} Obviously,
$$b(\sigma)=\sum\limits_{(i,j)\in\sigma}b_\sigma((i,j)).$$
For $f\in\sigma^0$ we also introduce for the future use notation $b_\sigma(f)$ -- the number of bridges over $f$.\index{$b_\sigma(f)$, Number of bridges over $f$} Obviously $b(\sigma)=\sum\limits_{f\in\sigma^0}b_\sigma(f)$. For instance,
if $\sigma=(2,6)(3,8)(4,5)\in I_{9,3}$, see Figure \ref{figlp1}, then $b(\sigma)=1$, namely, the arc $(3,8)$ is a bridge over the fixed point $7$.

For $a<b\in\mathbb N$ let $S_{[a,b]}$ denote the symmetric group on $\{a,a+1,\ldots,b\}$.\index{$S_{[a,b]}$, Symmetric group on $\{a,a+1,\ldots,b\}$}
For $1\leq a<b\leq n$ and $(i,j)\in\sigma$, we say that $(i,j)\in[a,b]$ if $a\leq i,j\leq b$. Let $\pi_{a,b}(\sigma):=\{(i,j)\in\sigma\cap[a,b]\}$\index{$\pi_{a,b}(\sigma)$, Involution $\{(i,j)\in\sigma\cap[a,b]\}$} and we consider it
as an involution of $S_{[a,b]}$.  Put $R_{[a,b]}(\sigma):=|\pi_{a,b}(\sigma)|$.\index{$R_{[a,b]}(\sigma)$, $|\pi_{a,b}(\sigma)|$} Let $R(\sigma)\in Mat_n(\mathbb Z^+)$ be the corresponding
(strictly upper triangular $n\times n$ matrix)\index{$R(\sigma)$, Strictly upper triangular $n\times n$ matrix}  defined by
$$(R(\sigma))_{i,j}:=\left\{\begin{array}{ll} R_{[i,j]}(\sigma)&{\rm if}\ i<j;\\
0&{\rm otherwise}\\
\end{array}\right.$$
For instance, if $\sigma=(1,4)(2,5)\in I_5$ then
$$R(\sigma)=\left(\begin{array}{lllll}
0&0&0&1&2\\
0&0&0&0&1\\
0&0&0&0&0\\
0&0&0&0&0\\
0&0&0&0&0
\end{array}\right).$$

We define a partial order on $Mat_n(\mathbb Z^+)$ by putting $A\leq B$\index{$A\leq B$, Partial order induces partial order on matrices} for $A,B\in Mat_n(\mathbb Z^+)$ if for any $1\leq i,j\leq n$ one has $A_{i,j}\leq B_{i,j}$. This partial order induces partial order on $I_n$ as follows. Let $\sigma,\upsilon\in I_n$ put $\sigma\geq \upsilon$ if $R(\sigma)\geq R(\upsilon)$.\index{$\sigma\geq \upsilon$, Partial order induces partial order on involutions}

Recall that for a partially ordered set $(S,\leq)$ the cover of $a\in S$ is\index{$Cov(a)$, Cover of $a$}
$$Cov(a):=\{b\in S\ :\ b<a\ {\rm and}\ a\leq c\leq b\, \Rightarrow\, (c=a\ {\rm or}\ c=b)\}.$$
For instance,
$$I_{5,2}=\left\{\begin{array}{c}(1,2)(3,4),\, (1,2)(3,5),\, (1,2)(4,5),\, (1,3)(2,4),\, (1,3)(2,5), (1,3)(4,5),\\
                                 (1,4)(2,3),\, (1,4)(2,5),\, (1,4)(3,5),\, (1,5)(2,3),\, (1,5)(2,4),\,(1,5)(3,4)\\
\end{array}\right\}.$$
The corresponding graph of link patterns is given in Figure \ref{fig5p}  where the first row consists of maximal link patterns and the last row contains the unique minimal link pattern of $I_{5,2}$. Edges between link patterns show the cover of a given link pattern in our relation.
\begin{figure}[htp]
\begin{center}
\begin{pspicture}(0,-.4)(8,6.3)
\put(2,6){\pscircle*(0,0){0.05}\pscircle*(.3,0){0.05}\pscircle*(.6,0){0.05}\pscircle*(.9,0){0.05}\pscircle*(1.2,0){0.05}
\put(-.1,-.3){$1$}\put(.2,-.3){$2$}\put(.5,-.3){$3$}\put(.8,-.3){$4$}\put(1.1,-.3){$5$}
\pscurve(0,0)(.45,.25)(.9,0)\pscurve(.3,0)(.45,.18)(.6,0)\put(-.08,-.6){\scriptsize $b=0,c=0$}}
\put(4,6){\pscircle*(0,0){0.05}\pscircle*(.3,0){0.05}\pscircle*(.6,0){0.05}\pscircle*(.9,0){0.05}\pscircle*(1.2,0){0.05}
\put(-.1,-.3){$1$}\put(.2,-.3){$2$}\put(.5,-.3){$3$}\put(.8,-.3){$4$}\put(1.1,-.3){$5$}
\pscurve(0,0)(.15,.2)(.3,0)\pscurve(.6,0)(.75,.2)(.9,0)\put(-.08,-.6){\scriptsize $b=0,c=0$}}
\put(6,6){\pscircle*(0,0){0.05}\pscircle*(.3,0){0.05}\pscircle*(.6,0){0.05}\pscircle*(.9,0){0.05}\pscircle*(1.2,0){0.05}
\put(-.1,-.3){$1$}\put(.2,-.3){$2$}\put(.5,-.3){$3$}\put(.8,-.3){$4$}\put(1.1,-.3){$5$}
\pscurve(0,0)(.15,.2)(.3,0)\pscurve(.9,0)(1.05,.2)(1.2,0)\put(-.08,-.6){\scriptsize $b=0,c=0$}}
\put(0,4){\pscircle*(0,0){0.05}\pscircle*(.3,0){0.05}\pscircle*(.6,0){0.05}\pscircle*(.9,0){0.05}\pscircle*(1.2,0){0.05}
\put(-.1,-.3){$1$}\put(.2,-.3){$2$}\put(.5,-.3){$3$}\put(.8,-.3){$4$}\put(1.1,-.3){$5$}
\pscurve(0,0)(.6,.25)(1.2,0)\pscurve(.3,0)(.45,.18)(.6,0)\put(-.08,-.6){\scriptsize $b=1,c=0$}}
\put(2,4){\pscircle*(0,0){0.05}\pscircle*(.3,0){0.05}\pscircle*(.6,0){0.05}\pscircle*(.9,0){0.05}\pscircle*(1.2,0){0.05}
\put(-.1,-.3){$1$}\put(.2,-.3){$2$}\put(.5,-.3){$3$}\put(.8,-.3){$4$}\put(1.1,-.3){$5$}
\pscurve(0,0)(.6,.25)(1.2,0)\pscurve(.6,0)(.75,.18)(.9,0)\put(-.08,-.6){\scriptsize $b=1,c=0$}}
\put(4,4){\pscircle*(0,0){0.05}\pscircle*(.3,0){0.05}\pscircle*(.6,0){0.05}\pscircle*(.9,0){0.05}\pscircle*(1.2,0){0.05}
\put(-.1,-.3){$1$}\put(.2,-.3){$2$}\put(.5,-.3){$3$}\put(.8,-.3){$4$}\put(1.1,-.3){$5$}
\pscurve(0,0)(.3,.2)(.6,0)\pscurve(.3,0)(.6,.2)(.9,0)\put(-.08,-.6){\scriptsize $b=0,c=1$}}
\put(6,4){\pscircle*(0,0){0.05}\pscircle*(.3,0){0.05}\pscircle*(.6,0){0.05}\pscircle*(.9,0){0.05}\pscircle*(1.2,0){0.05}
\put(-.1,-.3){$1$}\put(.2,-.3){$2$}\put(.5,-.3){$3$}\put(.8,-.3){$4$}\put(1.1,-.3){$5$}
\pscurve(0,0)(.15,.2)(.3,0)\pscurve(.6,0)(.9,.2)(1.2,0)\put(-.08,-.6){\scriptsize $b=1,c=0$}}
\put(8,4){\pscircle*(0,0){0.05}\pscircle*(.3,0){0.05}\pscircle*(.6,0){0.05}\pscircle*(.9,0){0.05}\pscircle*(1.2,0){0.05}
\put(-.1,-.3){$1$}\put(.2,-.3){$2$}\put(.5,-.3){$3$}\put(.8,-.3){$4$}\put(1.1,-.3){$5$}
\pscurve(0,0)(.3,.2)(.6,0)\pscurve(.9,0)(1.05,.2)(1.2,0)\put(-.08,-.6){\scriptsize $b=1,c=0$}}
\put(2,2){\pscircle*(0,0){0.05}\pscircle*(.3,0){0.05}\pscircle*(.6,0){0.05}\pscircle*(.9,0){0.05}\pscircle*(1.2,0){0.05}
\put(-.1,-.3){$1$}\put(.2,-.3){$2$}\put(.5,-.3){$3$}\put(.8,-.3){$4$}\put(1.1,-.3){$5$}
\pscurve(0,0)(.6,.25)(1.2,0)\pscurve(.3,0)(.6,.18)(.9,0)\put(-.08,-.6){\scriptsize $b=2,c=0$}}
\put(4,2){\pscircle*(0,0){0.05}\pscircle*(.3,0){0.05}\pscircle*(.6,0){0.05}\pscircle*(.9,0){0.05}\pscircle*(1.2,0){0.05}
\put(-.1,-.3){$1$}\put(.2,-.3){$2$}\put(.5,-.3){$3$}\put(.8,-.3){$4$}\put(1.1,-.3){$5$}
\pscurve(0,0)(.45,.22)(.9,0)\pscurve(.6,0)(.9,.22)(1.2,0)\put(-.08,-.6){\scriptsize $b=1,c=1$}}
\put(6,2){\pscircle*(0,0){0.05}\pscircle*(.3,0){0.05}\pscircle*(.6,0){0.05}\pscircle*(.9,0){0.05}\pscircle*(1.2,0){0.05}
\put(-.1,-.3){$1$}\put(.2,-.3){$2$}\put(.5,-.3){$3$}\put(.8,-.3){$4$}\put(1.1,-.3){$5$}
\pscurve(0,0)(.3,.2)(.6,0)\pscurve(.3,0)(.75,.2)(1.2,0)\put(-.08,-.6){\scriptsize $b=1,c=1$}}
\put(4,0){\pscircle*(0,0){0.05}\pscircle*(.3,0){0.05}\pscircle*(.6,0){0.05}\pscircle*(.9,0){0.05}\pscircle*(1.2,0){0.05}
\put(-.1,-.3){$1$}\put(.2,-.3){$2$}\put(.5,-.3){$3$}\put(.8,-.3){$4$}\put(1.1,-.3){$5$}
\pscurve(0,0)(.45,.22)(.9,0)\pscurve(.3,0)(.75,.22)(1.2,0)\put(-.08,-.6){\scriptsize $b=2,c=1$}}
\psline(4.5,.4)(2.6,1.3)\psline(4.5,.4)(4.6,1.3)\psline(4.5,.4)(6.6,1.3)
\psline(2.5,2.4)(.6,3.3)\psline(2.5,2.4)(2.6,3.3)\psline(2.5,2.4)(4.6,3.3)
\put(4,0){\psline(.5,2.4)(-1.4,3.3)\psline(.5,2.4)(2.6,3.3)\psline(.5,2.4)(4.6,3.3)}
\put(4,0){\psline(2.5,2.4)(.6,3.3)\psline(2.5,2.4)(2.6,3.3)\psline(2.5,2.4)(4.6,3.3)}
\psline(.5,4.4)(2.6,5.3)\psline(2.5,4.4)(4.6,5.3)\psline(4.5,4.4)(2.6,5.3)\psline(4.5,4.4)(4.6,5.3)
\psline(6.5,4.4)(4.6,5.3)\psline(6.5,4.4)(6.6,5.3)\psline(8.5,4.4)(6.6,5.3)
\end{pspicture}
\caption{Partial order relation of the link patterns of $I_{5,2}$}\label{fig5p}
\end{center}
\end{figure}
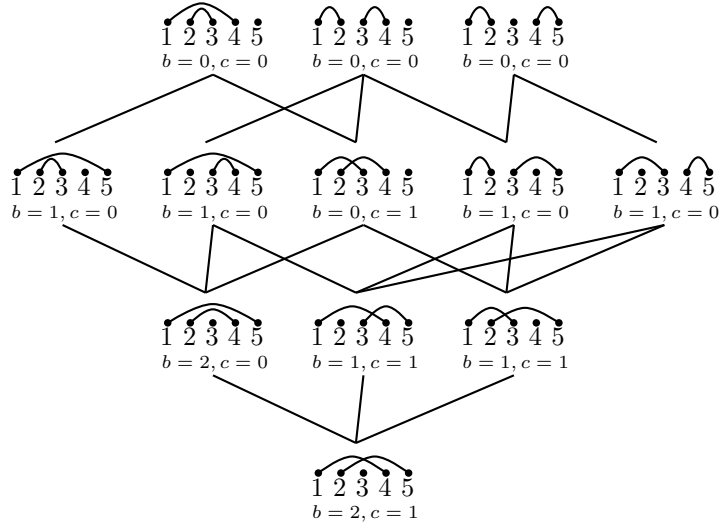

Put $\mathcal F_\sigma:=\overline{\mathcal Z_\sigma}$. By \cite{FM2009} and \cite{M2007} one has:

\begin{theorem}\label{thM007}
Let $x\in End(V)$ with $x^2=0$ and ${\rm Rank}\, x=k$. Then
\begin{itemize}
\item the map $\sigma\mapsto\mathcal{Z}_\sigma$ is a bijection between $I_{n,k}$ and the set of $Z_x$-orbits of the Springer fiber $\mathcal{F}_x$.
\item for  $\sigma\in I_{n,k}$, the dimension of $\mathcal{F}_\sigma$ is given by $\binom{n-k}{2}+\binom{k}{2}-b(\pi)-c(\pi)$. In particular, the irreducible components of $\mathcal{F}_x$ are  $\mathcal{F}_\sigma$ corresponding to $\sigma\in I_{n,k}$  without crossings and fixed points under an arc.
\item for any $\sigma,\upsilon\in I_{n,k}$, $\mathcal{F}_{\sigma}\supset\mathcal{F}_{\upsilon}$ if and only if $\sigma\geq \upsilon$.
\end{itemize}
In particular $\mathcal{Z}_{\omega_o}$,  where $\omega_o=(1,n-k+1)(2,n-k+2)\cdots(k,n)$,  is the unique closed $Z_x$-orbit of $\mathcal{F}_x$
(so that $\mathcal F_{\omega_o}=\mathcal Z_{\omega_o}$) and
$\dim\mathcal{F}_{\omega_o}=\binom{n-2k}{2}+\binom{k}{2}$ and this is the unique element of $I_{n,k}$ satisfying $\omega_o\leq\sigma$ for any $\sigma\in I_{n,k}.$
\end{theorem}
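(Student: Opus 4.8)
\emph{Sketch of the intended proof.} The statement collects three facts, which I would establish in turn: the parametrization of $Z_x$-orbits by $I_{n,k}$, the dimension formula, and the description of the closure order; the displayed consequences then follow formally. For the parametrization, the decomposition $\mathcal F_x=\coprod_{\sigma\in I_{n,k}}\mathcal Z_\sigma$ is already available and each $\mathcal Z_\sigma$ is a single $Z_x$-orbit (the orbit of any $\sigma$-flag), so only the injectivity of $\sigma\mapsto\mathcal Z_\sigma$ needs an argument. The plan is to attach to every $F=(V_0,\dots,V_n)\in\mathcal F_x$ the integer matrix $R(F)$ with $\bigl(R(F)\bigr)_{i,j}=\operatorname{rank}\bigl(\bar x\colon V_j/V_{i-1}\to V_j/V_{i-1}\bigr)$ for $i<j$ and $0$ otherwise; this is well defined because $x$-stability of the flag means $x$ preserves every $V_i$, hence descends to an endomorphism $\bar x$ of each subquotient. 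This matrix is constant on $Z_x$-orbits, and evaluating it on a $\sigma$-basis gives $R(F)=R(\sigma)$ for every $\sigma$-flag, where $R(\sigma)$ is the combinatorial matrix of \S\ref{2.1}. Since the predicate $(i,j)\in\sigma$ is recovered from $R(\sigma)$ by the inclusion--exclusion quantity $R_{[i,j]}(\sigma)-R_{[i+1,j]}(\sigma)-R_{[i,j-1]}(\sigma)+R_{[i+1,j-1]}(\sigma)$, the assignment $\sigma\mapsto R(\sigma)$ is injective on $I_{n,k}$, so distinct involutions give distinct orbits.

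For the dimension formula, since $\dim\mathcal F_x=\binom{n-k}{2}+\binom k2$ is known, it is equivalent to show $\operatorname{codim}_{\mathcal F_x}\mathcal Z_\sigma=b(\sigma)+c(\sigma)$, which I would prove by induction on $n$ by inspecting the line $V_1\subseteq\ker x$ of a flag. If $1\in\sigma^0$, then for $V_1$ in the open locus $\mathbb P(\ker x)\setminus\mathbb P(\operatorname{Im}x)$ the quotient $V/V_1$ carries a square-zero endomorphism of the same rank whose associated involution is $\sigma$ with the fixed point $1$ deleted, and $b,c$ are unchanged; this closes the recursion at once. If $1\in\sigma^\ell$, say $(1,j)\in\sigma$, then $V_1\subseteq\operatorname{Im}x$ and reduction modulo $V_1$ lowers the rank by one and converts the arc $(1,j)$ into a fixed point, so $b+c$ changes by a quantity governed by $b_\sigma((1,j))$ and $c_\sigma^r((1,j))$; assembling the dimension of the pertinent family of lines with the inductively known fibre dimension yields the formula, the cases $n\le2$ being trivial. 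Making the relevant fibration precise and carrying out this last piece of $b,c$-bookkeeping is the only genuinely computational part of the proof.

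For the closure order, one inclusion is soft. Over $\mathcal F_x$ the endomorphism $x$ induces, for each $i<j$, a morphism of vector bundles from the tautological subbundle $\mathcal V_j$ to $\mathcal V_j/\mathcal V_{i-1}$ whose fibrewise rank is $\bigl(R(F)\bigr)_{i,j}$; rank is lower semicontinuous and a $Z_x$-orbit is open in its closure, so the value of $\bigl(R(\,\cdot\,)\bigr)_{i,j}$ on the dense orbit $\mathcal Z_\sigma$ of $\overline{\mathcal Z_\sigma}$ dominates its value at every point of $\overline{\mathcal Z_\sigma}$. Hence $\mathcal Z_\upsilon\subseteq\overline{\mathcal Z_\sigma}$ forces $R(\upsilon)\le R(\sigma)$, that is $\upsilon\le\sigma$, which gives the implication $\mathcal F_\sigma\supseteq\mathcal F_\upsilon\Rightarrow\sigma\ge\upsilon$. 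The converse is the hard part and the main obstacle of the whole theorem: I would reduce it to the covering relations of $(I_{n,k},\le)$ by first describing each cover combinatorially as an elementary move on link patterns (shifting one end point by a single step, resolving one crossing, or lowering an arc onto an adjacent fixed point), and then, for each move, exhibiting an explicit one-parameter family of $\sigma$-bases $\bigl(v_1(t),\dots,v_n(t)\bigr)$, $t\in\mathbb K^\times$, whose flags lie in $\mathcal Z_\sigma$ and whose limit as $t\to0$ is a $\upsilon$-flag. Combined with the dimension formula, which forces $b+c$ to increase strictly across a cover, this yields $\overline{\mathcal Z_\sigma}=\coprod_{\upsilon\le\sigma}\mathcal Z_\upsilon$.

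The remaining assertions are then immediate. A stratum $\mathcal Z_\sigma$ is dense in an irreducible component of $\mathcal F_x$ exactly when $\dim\mathcal Z_\sigma=\dim\mathcal F_x$, i.e. $b(\sigma)=c(\sigma)=0$, i.e. $\sigma$ has no crossing and no fixed point under an arc; and since $\mathcal F_x=\coprod_\sigma\mathcal Z_\sigma$, every component is of this form. The element $\omega_o=(1,n-k+1)(2,n-k+2)\cdots(k,n)$ is the unique $\le$-minimal element of $I_{n,k}$---seen either directly from the inclusion--exclusion description of $R$ or because repeatedly applying downward covering moves to any $\sigma$ terminates at $\omega_o$---so $\overline{\mathcal Z_{\omega_o}}$ contains no orbit besides $\mathcal Z_{\omega_o}$, whence $\mathcal F_{\omega_o}=\mathcal Z_{\omega_o}$ is closed, and being below every $\mathcal Z_\sigma$ it is the unique closed orbit. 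Finally each of the $k$ arcs of $\omega_o$ bridges all $n-2k$ fixed points and every two of its arcs cross, so $b(\omega_o)=k(n-2k)$, $c(\omega_o)=\binom k2$, and the dimension formula gives $\dim\mathcal F_{\omega_o}=\binom{n-k}{2}+\binom k2-k(n-2k)-\binom k2=\binom{n-2k}{2}+\binom k2$.
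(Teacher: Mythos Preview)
The paper does not prove this theorem: it is quoted verbatim as a preliminary result, introduced by ``By \cite{FM2009} and \cite{M2007} one has'' and given no argument whatsoever. So there is no proof in the paper against which to compare your attempt.

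That said, your sketch is sound and in fact tracks the strategy of the cited sources rather closely. The rank matrix $R(F)$ you introduce is exactly the invariant used in \cite{M2007} to separate $Z_x$-orbits and to describe the closure order; lower semicontinuity of rank gives the easy direction $\mathcal F_\sigma\supseteq\mathcal F_\upsilon\Rightarrow\sigma\ge\upsilon$, and the converse is indeed obtained there by reducing to covers and exhibiting explicit one-parameter degenerations, just as you propose. Your inductive scheme for the dimension formula (peeling off $V_1$ and distinguishing $1\in\sigma^0$ from $1\in\sigma^\ell$) is the standard one and matches the approach in \cite{FM2009}; the bookkeeping you flag as ``the only genuinely computational part'' is precisely where the quantities $b_\sigma((1,j))+c_\sigma^r((1,j))$ enter. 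Your derivation of the consequences for irreducible components, for the minimality and closedness of $\mathcal Z_{\omega_o}$, and the final dimension computation $\binom{n-k}{2}+\binom k2-k(n-2k)-\binom k2=\binom{n-2k}{2}+\binom k2$ are all correct.

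One small point worth tightening if you ever write this up in full: in the list of covering moves you describe (``shifting one end point by a single step, resolving one crossing, or lowering an arc onto an adjacent fixed point''), make sure your enumeration is provably exhaustive for the order $\le$ on $I_{n,k}$ defined via $R$; this is where the real work in the cited references lies, and a sketch that merely names plausible moves without proving they generate all covers would leave a gap.
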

By this theorem and \cite{Tim1995} $\sigma'\in Cov(\sigma)$ iff ${\rm codim}_{\mathcal F_\sigma}\mathcal Z_{\sigma'}=1$.

Define $d_0:=\dim \mathcal F_{\omega_o}=\binom{n-2k}{2}+\binom{k}{2}$.\index{$d_0$}
Note that by the theorem each $\mathcal F_T$ contains the unique dense $Z_x$-orbit $\mathcal Z_\sigma$, so that there is a bijection between the components of $\mathcal F_x$ and involutions $\sigma$ without crossings and fixed points under an arc. For $T=(T_1,T_2)\in Tab_{(2^k,1^{n-2k})}$ let $T_1=(1,a_2,\ldots,a_{n-k})$ denote the first column of $T$ and $T_2=(b_1,\ldots,b_k)$ denote its second column.  $\sigma_T$ such that $\mathcal F_T=\mathcal F_{\sigma_T}$ is constructed as follows. We define\index{$\sigma_T$}
\begin{align}
\sigma_T=(i_1,b_1),\ldots,(i_k,b_k),\label{eqsigmaT}
\end{align}
where $i_1=b_1-1$ and $i_s=\max\{a_m\in T_1\setminus\{i_1,\ldots,i_{s-1}\}\ :\ a_m<b_s\}.$

 For instance, let
$$T=\ytableausetup{mathmode}
 \begin{ytableau}
  1 & 4\\
  2 & 5\\
  3 &9\\
  6&10\\
  7\\
  8\\
 \end{ytableau}.$$
One has $b_1=4$, $b_2=5$, $b_3=9$ and $b_4=10$. Thus, $i_1=b_1-1=3$, $i_2=\max\{a\in\{1,2,6,7,8\}|a<5\}=2$, $i_3=\max\{a\in\{1,6,7,8\}|a<9\}=8$ and $i_4=\max\{a\in\{1,2,6,7\}|a<10\}=7$. Thus, $\sigma_T=(3,4)(2,5)(8,9)(7,10)\in I_{10,4}$.

Given $\sigma=(i_1,j_1)\ldots(i_k,j_k)\in I_{(n,k)}$ where $j_1<j_2<\ldots j_k$ without crossings and fixed points under an arc,  $T\in Tab_{(2^k,1^{n-2k})}$ such that $\sigma_T=\sigma$ is $T=(T_1,T_2)$ where $T_2=(j_1,\ldots,j_k)$ and $T_1=\{i\}_{i=1}^n\setminus\{j_s\}_{s=1}^k$ written in increasing order. We  call $\sigma\in I_{n,k}$ without crossings and fixed point under an arc {\bf maximal} and denote by $I_{n,k}^{\max}$\index{$I_{n,k}^{\max}$, Subset of all maximal $\sigma$ in $I_{n,k}$} the subset of all maximal $\sigma$ in $I_{n,k}$.

For $\sigma\in I_{n,k}^{\max}$ put $\tau^*(\sigma):=\{i\ :\
(i,i+1)\in\sigma\}$.\index{$\tau^*(\sigma)$, Set $\{i\ :\
(i,i+1)\in\sigma\}$} One can easily see that
$\tau^*(\sigma_T)=\tau^*(T).$ Respectively, put
$$\rho(\sigma)=\left\{\begin{array}{ll}|\tau^*(\sigma)|+2&{\rm if}\ 1,n\in\sigma^0;\\
|\tau^*(\sigma)|+1 &{\rm if\ either}\ |\sigma^0\cap\{1,n\}|=1,\ {\rm or}\ (1,n)\in\sigma;\\
|\tau^*(\sigma)|&{\rm otherwise}.\\
\end{array}\right.$$
Again, $\rho(\sigma_T)=\rho(T)$.\index{$\rho(\sigma_T)$, $\rho(T)$}

\subsection{$G_{T}$ graphs and smoothness of $\mathcal{F}_T$}\label{secGG}
Given $\sigma\in I_{(n,k)}$, for $(i_1,j_1),\ldots, (i_s,j_s)\in \sigma$ put $\sigma_{(i_1,j_1),\ldots,(i_s,j_s)}^-\in I_{n,k-s}$ to be a link pattern obtained from $\sigma$ by deleting arcs $(i_1,j_1),\ldots, (i_s,j_s).$\index{$\sigma_{(i_1,j_1),\ldots,(i_s,j_s)}^-$, Link pattern obtained from $\sigma$ by deleting arcs $(i_1,j_1)$, $\ldots$, $(i_s,j_s)$} For $i<j\in\sigma^0$ put $(i,j)\sigma=\sigma(i,j)\in I_{n,k+1}$ to be a link pattern obtained from $\sigma$ by adding arc $(i,j).$\index{$\sigma(i,j)=(i,j)\sigma$, Link pattern obtained from $\sigma$ by adding arc $(i,j)$}

For $(i,j)\in\sigma$ and $f\in \sigma^0$ such that $(i,j)$ is bridge over $f$ we say that $\sigma'\in I_{n,k}$ is obtained from $\sigma$ by an {\bf elementary move forward of first type}\index{Elementary move forward of first type} if either $\sigma'=(i,f)\sigma^-_{(i,j)}$  or $\sigma'=(f,j)\sigma^-_{(i,j)}$. We say that $\sigma$ is obtained by an {\bf elementary move backward of first type}\index{Elementary move backward of first type} from $\sigma'$ in this case.

For $(i,j),(\ell,m)\in \sigma$ satisfying $i<\ell<j<m,$  we say that $\sigma'\in I_{n,k}$ is obtained from $\sigma$ by an {\bf elementary move forward of the second type}\index{Elementary move forward of the second type} if either $\sigma'=(i,\ell)(j,m)\sigma_{(i,j),(\ell,m)}^-$ or $\sigma'=(i,m)(j,\ell)\sigma_{(i,j),(\ell,m)}^-$. Again say that $\sigma$ is obtained by an {\bf elementary move backward of second type}\index{Elementary move backward of second type} from $\sigma'$ in this case.

We call $\sigma'$  a {\bf predecessor}\index{Predecessor} of $\sigma$ if is obtained from $\sigma$ by an elementary move forward. Note that  $\sigma'>\sigma$.

For $\sigma\in I_{n,k}$, graph $G_\sigma$ is a graph on vertices
$$\mathcal V_{G_\sigma}=\{\upsilon\in I_{n,k}\ :\ \upsilon \leq \sigma\}$$\index{$\mathcal V_{G_\sigma}$, Vertices of the graph $G_\sigma$}
and with edges
$$\mathcal E_{G_\sigma}=\{(\upsilon,\upsilon')\ :\ \upsilon,\upsilon'\in \mathcal V_{G_\pi}\ {\rm and\ one\ of \ them\ is\ a\ predecessor\ of\ another}\}.$$\index{$\mathcal E_{G_\sigma}$, Edges of the graph $G_\sigma$}
For $\upsilon\in \mathcal V_{G_\sigma}$, let $a_{\sigma,\upsilon}$ denote the number of edges of vertex $\upsilon$ in $G_\sigma$.\index{$a_{\sigma,\upsilon}$, Number of edges of vertex $\upsilon$ in $G_\sigma$} For instance, Figure \ref{figgg1} presents the graph $G_\sigma$, where $\sigma=(2,3)(4,5)\in I_{6,2}$. Column $i$ of the graph contains $\sigma'$ with ${\rm codim}_{\mathcal F_{(2,3)(4,5)}}\mathcal Z_{\sigma'}=i-1$.
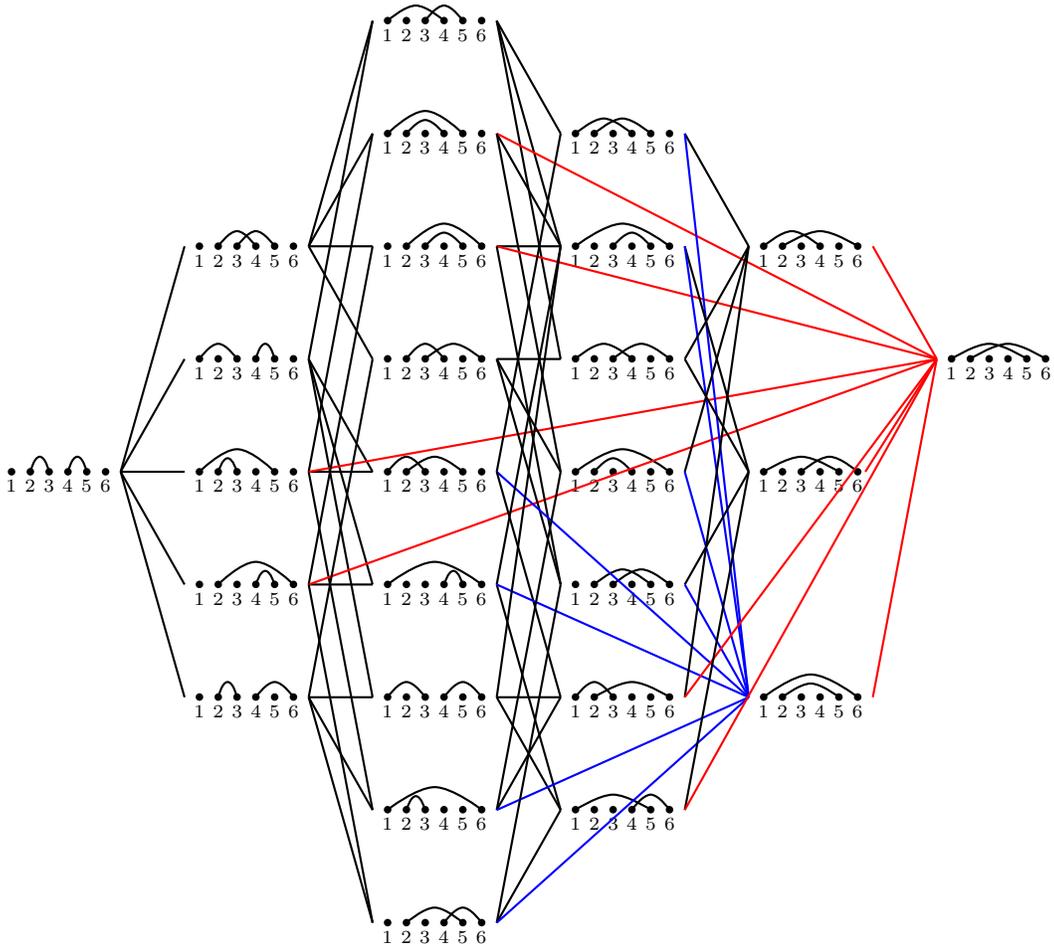
\begin{figure}[h!]
\begin{center}
\begin{pspicture}(0,-6)(13,6)
\put(0,0){\pscircle*(0,0){0.05}\pscircle*(.25,0){0.05}\pscircle*(.5,0){0.05}
\pscircle*(.75,0){0.05}\pscircle*(1,0){0.05}\pscircle*(1.25,0){0.05}
\put(-.07,-.27){\scriptsize$1$}\put(.18,-.27){\scriptsize$2$}\put(.43,-.27){\scriptsize$3$}
\put(.68,-.27){\scriptsize$4$}\put(.93,-.27){\scriptsize$5$}\put(1.18,-.27){\scriptsize$6$}
\pscurve(.25,0)(.375,.2)(.5,0)\pscurve(.75,0)(.875,.2)(1,0)}
\put(2.5,3){\pscircle*(0,0){0.05}\pscircle*(.25,0){0.05}\pscircle*(.5,0){0.05}\pscircle*(.75,0){0.05}\pscircle*(1,0){0.05}\pscircle*(1.25,0){0.05}
\put(-.07,-.27){\scriptsize$1$}\put(.18,-.27){\scriptsize$2$}\put(.43,-.27){\scriptsize$3$}
\put(.68,-.27){\scriptsize$4$}\put(.93,-.27){\scriptsize$5$}\put(1.18,-.27){\scriptsize$6$}
\pscurve(.25,0)(.5,.2)(.75,0)\pscurve(.5,0)(.75,.2)(1,0)}
\put(2.5,1.5){\pscircle*(0,0){0.05}\pscircle*(.25,0){0.05}\pscircle*(.5,0){0.05}\pscircle*(.75,0){0.05}\pscircle*(1,0){0.05}\pscircle*(1.25,0){0.05}
\put(-.07,-.27){\scriptsize$1$}\put(.18,-.27){\scriptsize$2$}\put(.43,-.27){\scriptsize$3$}
\put(.68,-.27){\scriptsize$4$}\put(.93,-.27){\scriptsize$5$}\put(1.18,-.27){\scriptsize$6$}
\pscurve(0,0)(.25,.2)(.5,0)\pscurve(.75,0)(.875,.2)(1,0)}
\put(2.5,0){\pscircle*(0,0){0.05}\pscircle*(.25,0){0.05}\pscircle*(.5,0){0.05}\pscircle*(.75,0){0.05}\pscircle*(1,0){0.05}\pscircle*(1.25,0){0.05}
\put(-.07,-.27){\scriptsize$1$}\put(.18,-.27){\scriptsize$2$}\put(.43,-.27){\scriptsize$3$}
\put(.68,-.27){\scriptsize$4$}\put(.93,-.27){\scriptsize$5$}\put(1.18,-.27){\scriptsize$6$}
\pscurve(0,0)(.5,.3)(1,0)\pscurve(.25,0)(.375,.18)(.5,0)}
\put(2.5,-1.5){\pscircle*(0,0){0.05}\pscircle*(.25,0){0.05}\pscircle*(.5,0){0.05}\pscircle*(.75,0){0.05}\pscircle*(1,0){0.05}\pscircle*(1.25,0){0.05}
\put(-.07,-.27){\scriptsize$1$}\put(.18,-.27){\scriptsize$2$}\put(.43,-.27){\scriptsize$3$}
\put(.68,-.27){\scriptsize$4$}\put(.93,-.27){\scriptsize$5$}\put(1.18,-.27){\scriptsize$6$}
\pscurve(.25,0)(.75,.3)(1.25,0)\pscurve(.75,0)(.875,.18)(1,0)}
\put(2.5,-3){\pscircle*(0,0){0.05}\pscircle*(.25,0){0.05}\pscircle*(.5,0){0.05}\pscircle*(.75,0){0.05}\pscircle*(1,0){0.05}\pscircle*(1.25,0){0.05}
\put(-.07,-.27){\scriptsize$1$}\put(.18,-.27){\scriptsize$2$}\put(.43,-.27){\scriptsize$3$}
\put(.68,-.27){\scriptsize$4$}\put(.93,-.27){\scriptsize$5$}\put(1.18,-.27){\scriptsize$6$}
\pscurve(.25,0)(.375,.2)(.5,0)\pscurve(.75,0)(1,.2)(1.25,0)}
\put(5,6){\pscircle*(0,0){0.05}\pscircle*(.25,0){0.05}\pscircle*(.5,0){0.05}\pscircle*(.75,0){0.05}\pscircle*(1,0){0.05}\pscircle*(1.25,0){0.05}
\put(-.07,-.27){\scriptsize$1$}\put(.18,-.27){\scriptsize$2$}\put(.43,-.27){\scriptsize$3$}
\put(.68,-.27){\scriptsize$4$}\put(.93,-.27){\scriptsize$5$}\put(1.18,-.27){\scriptsize$6$}
\pscurve(0,0)(.375,.2)(.75,0)\pscurve(.5,0)(.75,.2)(1,0)}
\put(5,4.5){\pscircle*(0,0){0.05}\pscircle*(.25,0){0.05}\pscircle*(.5,0){0.05}\pscircle*(.75,0){0.05}\pscircle*(1,0){0.05}\pscircle*(1.25,0){0.05}
\put(-.07,-.27){\scriptsize$1$}\put(.18,-.27){\scriptsize$2$}\put(.43,-.27){\scriptsize$3$}
\put(.68,-.27){\scriptsize$4$}\put(.93,-.27){\scriptsize$5$}\put(1.18,-.27){\scriptsize$6$}
\pscurve(0,0)(.5,.3)(1,0)\pscurve(.25,0)(.5,.18)(.75,0)}
\put(5,3){\pscircle*(0,0){0.05}\pscircle*(.25,0){0.05}\pscircle*(.5,0){0.05}\pscircle*(.75,0){0.05}\pscircle*(1,0){0.05}\pscircle*(1.25,0){0.05}
\put(-.07,-.27){\scriptsize$1$}\put(.18,-.27){\scriptsize$2$}\put(.43,-.27){\scriptsize$3$}
\put(.68,-.27){\scriptsize$4$}\put(.93,-.27){\scriptsize$5$}\put(1.18,-.27){\scriptsize$6$}
\pscurve(.25,0)(.75,.3)(1.25,0)\pscurve(.5,0)(.75,.18)(1,0)}
\put(5,1.5){\pscircle*(0,0){0.05}\pscircle*(.25,0){0.05}\pscircle*(.5,0){0.05}\pscircle*(.75,0){0.05}\pscircle*(1,0){0.05}\pscircle*(1.25,0){0.05}
\put(-.07,-.27){\scriptsize$1$}\put(.18,-.27){\scriptsize$2$}\put(.43,-.27){\scriptsize$3$}
\put(.68,-.27){\scriptsize$4$}\put(.93,-.27){\scriptsize$5$}\put(1.18,-.27){\scriptsize$6$}
\pscurve(.25,0)(.5,.2)(.75,0)\pscurve(.5,0)(.875,.2)(1.25,0)}
\put(5,0){\pscircle*(0,0){0.05}\pscircle*(.25,0){0.05}\pscircle*(.5,0){0.05}\pscircle*(.75,0){0.05}\pscircle*(1,0){0.05}\pscircle*(1.25,0){0.05}
\put(-.07,-.27){\scriptsize$1$}\put(.18,-.27){\scriptsize$2$}\put(.43,-.27){\scriptsize$3$}
\put(.68,-.27){\scriptsize$4$}\put(.93,-.27){\scriptsize$5$}\put(1.18,-.27){\scriptsize$6$}
\pscurve(0,0)(.25,.2)(.5,0)\pscurve(.25,0)(.625,.2)(1,0)}
\put(5,-1.5){\pscircle*(0,0){0.05}\pscircle*(.25,0){0.05}\pscircle*(.5,0){0.05}\pscircle*(.75,0){0.05}\pscircle*(1,0){0.05}\pscircle*(1.25,0){0.05}
\put(-.07,-.27){\scriptsize$1$}\put(.18,-.27){\scriptsize$2$}\put(.43,-.27){\scriptsize$3$}
\put(.68,-.27){\scriptsize$4$}\put(.93,-.27){\scriptsize$5$}\put(1.18,-.27){\scriptsize$6$}
\pscurve(0,0)(.625,.3)(1.25,0)\pscurve(.75,0)(.875,.18)(1,0)}
\put(5,-3){\pscircle*(0,0){0.05}\pscircle*(.25,0){0.05}\pscircle*(.5,0){0.05}\pscircle*(.75,0){0.05}\pscircle*(1,0){0.05}\pscircle*(1.25,0){0.05}
\put(-.07,-.27){\scriptsize$1$}\put(.18,-.27){\scriptsize$2$}\put(.43,-.27){\scriptsize$3$}
\put(.68,-.27){\scriptsize$4$}\put(.93,-.27){\scriptsize$5$}\put(1.18,-.27){\scriptsize$6$}
\pscurve(0,0)(.25,.2)(.5,0)\pscurve(.75,0)(1,.2)(1.25,0)}
\put(5,-4.5){\pscircle*(0,0){0.05}\pscircle*(.25,0){0.05}\pscircle*(.5,0){0.05}\pscircle*(.75,0){0.05}\pscircle*(1,0){0.05}\pscircle*(1.25,0){0.05}
\put(-.07,-.27){\scriptsize$1$}\put(.18,-.27){\scriptsize$2$}\put(.43,-.27){\scriptsize$3$}
\put(.68,-.27){\scriptsize$4$}\put(.93,-.27){\scriptsize$5$}\put(1.18,-.27){\scriptsize$6$}
\pscurve(0,0)(.625,.3)(1.25,0)\pscurve(.25,0)(.375,.18)(.5,0)}
\put(5,-6){\pscircle*(0,0){0.05}\pscircle*(.25,0){0.05}\pscircle*(.5,0){0.05}\pscircle*(.75,0){0.05}\pscircle*(1,0){0.05}\pscircle*(1.25,0){0.05}
\put(-.07,-.27){\scriptsize$1$}\put(.18,-.27){\scriptsize$2$}\put(.43,-.27){\scriptsize$3$}
\put(.68,-.27){\scriptsize$4$}\put(.93,-.27){\scriptsize$5$}\put(1.18,-.27){\scriptsize$6$}
\pscurve(.25,0)(.625,.2)(1,0)\pscurve(.75,0)(1,.2)(1.25,0)}
\put(7.5,4.5){\pscircle*(0,0){0.05}\pscircle*(.25,0){0.05}\pscircle*(.5,0){0.05}\pscircle*(.75,0){0.05}\pscircle*(1,0){0.05}\pscircle*(1.25,0){0.05}
\put(-.07,-.27){\scriptsize$1$}\put(.18,-.27){\scriptsize$2$}\put(.43,-.27){\scriptsize$3$}
\put(.68,-.27){\scriptsize$4$}\put(.93,-.27){\scriptsize$5$}\put(1.18,-.27){\scriptsize$6$}
\pscurve(0,0)(.375,.2)(.75,0)\pscurve(.25,0)(.625,.2)(1,0)}
\put(7.5,3){\pscircle*(0,0){0.05}\pscircle*(.25,0){0.05}\pscircle*(.5,0){0.05}\pscircle*(.75,0){0.05}\pscircle*(1,0){0.05}\pscircle*(1.25,0){0.05}
\put(-.07,-.27){\scriptsize$1$}\put(.18,-.27){\scriptsize$2$}\put(.43,-.27){\scriptsize$3$}
\put(.68,-.27){\scriptsize$4$}\put(.93,-.27){\scriptsize$5$}\put(1.18,-.27){\scriptsize$6$}
\pscurve(0,0)(.625,.3)(1.25,0)\pscurve(.5,0)(.75,.18)(1,0)}
\put(7.5,1.5){\pscircle*(0,0){0.05}\pscircle*(.25,0){0.05}\pscircle*(.5,0){0.05}\pscircle*(.75,0){0.05}\pscircle*(1,0){0.05}\pscircle*(1.25,0){0.05}
\put(-.07,-.27){\scriptsize$1$}\put(.18,-.27){\scriptsize$2$}\put(.43,-.27){\scriptsize$3$}
\put(.68,-.27){\scriptsize$4$}\put(.93,-.27){\scriptsize$5$}\put(1.18,-.27){\scriptsize$6$}
\pscurve(0,0)(.375,.2)(.75,0)\pscurve(.5,0)(.875,.2)(1.25,0)}
\put(7.5,0){\pscircle*(0,0){0.05}\pscircle*(.25,0){0.05}\pscircle*(.5,0){0.05}\pscircle*(.75,0){0.05}\pscircle*(1,0){0.05}\pscircle*(1.25,0){0.05}
\put(-.07,-.27){\scriptsize$1$}\put(.18,-.27){\scriptsize$2$}\put(.43,-.27){\scriptsize$3$}
\put(.68,-.27){\scriptsize$4$}\put(.93,-.27){\scriptsize$5$}\put(1.18,-.27){\scriptsize$6$}
\pscurve(0,0)(.625,.3)(1.25,0)\pscurve(.25,0)(.5,.18)(.75,0)}
\put(7.5,-1.5){\pscircle*(0,0){0.05}\pscircle*(.25,0){0.05}\pscircle*(.5,0){0.05}\pscircle*(.75,0){0.05}\pscircle*(1,0){0.05}\pscircle*(1.25,0){0.05}
\put(-.07,-.27){\scriptsize$1$}\put(.18,-.27){\scriptsize$2$}\put(.43,-.27){\scriptsize$3$}
\put(.68,-.27){\scriptsize$4$}\put(.93,-.27){\scriptsize$5$}\put(1.18,-.27){\scriptsize$6$}
\pscurve(.25,0)(.625,.2)(1,0)\pscurve(.5,0)(.875,.2)(1.25,0)}
\put(7.5,-3){\pscircle*(0,0){0.05}\pscircle*(.25,0){0.05}\pscircle*(.5,0){0.05}\pscircle*(.75,0){0.05}\pscircle*(1,0){0.05}\pscircle*(1.25,0){0.05}
\put(-.07,-.27){\scriptsize$1$}\put(.18,-.27){\scriptsize$2$}\put(.43,-.27){\scriptsize$3$}
\put(.68,-.27){\scriptsize$4$}\put(.93,-.27){\scriptsize$5$}\put(1.18,-.27){\scriptsize$6$}
\pscurve(0,0)(.25,.2)(.5,0)\pscurve(.25,0)(.75,.2)(1.25,0)}
\put(7.5,-4.5){\pscircle*(0,0){0.05}\pscircle*(.25,0){0.05}\pscircle*(.5,0){0.05}\pscircle*(.75,0){0.05}\pscircle*(1,0){0.05}\pscircle*(1.25,0){0.05}
\put(-.07,-.27){\scriptsize$1$}\put(.18,-.27){\scriptsize$2$}\put(.43,-.27){\scriptsize$3$}
\put(.68,-.27){\scriptsize$4$}\put(.93,-.27){\scriptsize$5$}\put(1.18,-.27){\scriptsize$6$}
\pscurve(0,0)(.5,.2)(1,0)\pscurve(.75,0)(1,.2)(1.25,0)}
\put(10,3){\pscircle*(0,0){0.05}\pscircle*(.25,0){0.05}\pscircle*(.5,0){0.05}\pscircle*(.75,0){0.05}\pscircle*(1,0){0.05}\pscircle*(1.25,0){0.05}
\put(-.07,-.27){\scriptsize$1$}\put(.18,-.27){\scriptsize$2$}\put(.43,-.27){\scriptsize$3$}
\put(.68,-.27){\scriptsize$4$}\put(.93,-.27){\scriptsize$5$}\put(1.18,-.27){\scriptsize$6$}
\pscurve(0,0)(.375,.2)(.75,0)\pscurve(.25,0)(.75,.2)(1.25,0)}
\put(10,0){\pscircle*(0,0){0.05}\pscircle*(.25,0){0.05}\pscircle*(.5,0){0.05}\pscircle*(.75,0){0.05}\pscircle*(1,0){0.05}\pscircle*(1.25,0){0.05}
\put(-.07,-.27){\scriptsize$1$}\put(.18,-.27){\scriptsize$2$}\put(.43,-.27){\scriptsize$3$}
\put(.68,-.27){\scriptsize$4$}\put(.93,-.27){\scriptsize$5$}\put(1.18,-.27){\scriptsize$6$}
\pscurve(0,0)(.5,.2)(1,0)\pscurve(.5,0)(.875,.2)(1.25,0)}
\put(10,-3){\pscircle*(0,0){0.05}\pscircle*(.25,0){0.05}\pscircle*(.5,0){0.05}\pscircle*(.75,0){0.05}\pscircle*(1,0){0.05}\pscircle*(1.25,0){0.05}
\put(-.07,-.27){\scriptsize$1$}\put(.18,-.27){\scriptsize$2$}\put(.43,-.27){\scriptsize$3$}
\put(.68,-.27){\scriptsize$4$}\put(.93,-.27){\scriptsize$5$}\put(1.18,-.27){\scriptsize$6$}
\pscurve(0,0)(.625,.3)(1.25,0)\pscurve(.25,0)(.625,.18)(1,0)}
\put(12.5,1.5){\pscircle*(0,0){0.05}\pscircle*(.25,0){0.05}\pscircle*(.5,0){0.05}\pscircle*(.75,0){0.05}\pscircle*(1,0){0.05}\pscircle*(1.25,0){0.05}
\put(-.07,-.27){\scriptsize$1$}\put(.18,-.27){\scriptsize$2$}\put(.43,-.27){\scriptsize$3$}
\put(.68,-.27){\scriptsize$4$}\put(.93,-.27){\scriptsize$5$}\put(1.18,-.27){\scriptsize$6$}
\pscurve(0,0)(.5,.2)(1,0)\pscurve(.25,0)(.75,.2)(1.25,0)}
\psline(1.45,0)(2.3,3)\psline(1.45,0)(2.3,1.5)\psline(1.45,0)(2.3,0)\psline(1.45,0)(2.3,-1.5)\psline(1.45,0)(2.3,-3)
\psline(3.95,3)(4.8,6)\psline(3.95,3)(4.8,4.5)\psline(3.95,3)(4.8,3)\psline(3.95,3)(4.8,1.5)
\psline(3.95,1.5)(4.8,6)\psline(3.95,1.5)(4.8,0)\psline(3.95,1.5)(4.8,-1.5)\psline(3.95,1.5)(4.8,-3)
\psline(3.95,0)(4.8,4.5)\psline(3.95,0)(4.8,0)\psline(3.95,0)(4.8,-4.5)\psline[linecolor=red](3.95,0)(12.3,1.5)
\psline(3.95,-1.5)(4.8,3)\psline(3.95,-1.5)(4.8,-1.5)\psline[linecolor=red](3.95,-1.5)(12.3,1.5)\psline(3.95,-1.5)(4.8,-6)
\psline(3.95,-3)(4.8,1.5)\psline(3.95,-3)(4.8,-3)\psline(3.95,-3)(4.8,-4.5)\psline(3.95,-3)(4.8,-6)
\psline(6.45,6)(7.3,4.5)\psline(6.45,6)(7.3,3)\psline(6.45,6)(7.3,1.5)
\psline(6.45,4.5)(7.3,3)\psline[linecolor=red](6.45,4.5)(12.3,1.5)\psline(6.45,4.5)(7.3,0)
\psline(6.45,3)(7.3,3)\psline[linecolor=red](6.45,3)(12.3,1.5)\psline(6.45,3)(7.3,-1.5)
\psline(6.45,1.5)(7.3,1.5)\psline(6.45,1.5)(7.3,0)\psline(6.45,1.5)(7.3,-1.5)
\psline(6.45,0)(7.3,4.5)\psline[linecolor=blue](6.45,0)(9.8,-3)\psline(6.45,0)(7.3,-3)
\psline(6.45,-1.5)(7.3,3)\psline(6.45,-1.5)(7.3,-4.5)\psline[linecolor=blue](6.45,-1.5)(9.8,-3)
\psline(6.45,-3)(7.3,3)\psline(6.45,-3)(7.3,-3)\psline(6.45,-3)(7.3,-4.5)
\psline(6.45,-4.5)(7.3,0)\psline(6.45,-4.5)(7.3,-3)\psline[linecolor=blue](6.45,-4.5)(9.8,-3)
\psline(6.45,-6)(7.3,-4.5)\psline(6.45,-6)(7.3,-1.5)\psline[linecolor=blue](6.45,-6)(9.8,-3)
\psline(8.95,4.5)(9.8,3)\psline[linecolor=blue](8.95,4.5)(9.8,-3)
\psline(8.95,3)(9.8,0)\psline[linecolor=blue](8.95,3)(9.8,-3)
\psline(8.95,1.5)(9.8,3)\psline(8.95,1.5)(9.8,0)
\psline(8.95,0)(9.8,3)\psline[linecolor=blue](8.95,0)(9.8,-3)
\psline(8.95,-1.5)(9.8,0)\psline[linecolor=blue](8.95,-1.5)(9.8,-3)
\psline(8.95,-3)(9.8,3)\psline[linecolor=red](8.95,-3)(12.3,1.5)
\psline(8.95,-4.5)(9.8,0)\psline[linecolor=red](8.95,-4.5)(12.3,1.5)
\psline[linecolor=red](11.45,3)(12.3,1.5)
\psline[linecolor=red](11.35,.0)(12.3,1.5)
\psline[linecolor=red](11.45,-3)(12.3,1.5)
\end{pspicture}
\caption{The graph $G_\sigma$, where $\sigma=(2,3)(4,5)\in S_6$.}\label{figgg1}
\end{center}
\end{figure}

Here we have $a_{(2,3)(4,5),(2,3)(4,5)}=5$ and $a_{(2,3)(4,5),(1,5)(2,6)}=9$.
In what follows we will write $\upsilon\in G_\sigma$ instead of $\upsilon\in\mathcal{V}_{G_\sigma}$ in order to simplify the notation.
Recall that a graph is called $p$-regular if every vertex has $p$ edges.
For a projective variety $\mathcal V$ and a point $F\in \mathcal V$, let  $\mathcal T_F(\mathcal V)$ denote the tangent space to $\mathcal V$ at point $F$.\index{$\mathcal T_F(\mathcal V)$, Tangent space to $\mathcal V$ at point $F$} By \cite{F2012} one has

\begin{theorem}
Let $x\in End(V)$ with $x^2=0$ and ${\rm Rank}\, x=k$. For $\sigma,\upsilon\in I_{n,k}$ with $\upsilon\in G_\sigma$, let $\mathcal{Z}_\sigma,\mathcal Z_\upsilon\subset\mathcal{F}_x$ be the corresponding $Z_x$-orbits. Let $F_\upsilon\in\mathcal Z_\upsilon$ denote a point of orbit $\mathcal Z_\upsilon$. Define $p_\sigma:=\binom{n-k}{2}-\binom{n-2k}{2}-b(\sigma)-c(\sigma)$.\index{$p_\sigma$}
\begin{itemize}
\item  One has $\dim \mathcal T_{F_\upsilon}(\mathcal F_\sigma)=a_{\sigma,\upsilon}+d_0$. In particular,
$a_{\sigma,\upsilon}\geq dim\mathcal{Z}_\sigma-d_0=a_{\sigma,\sigma}=p_\sigma$.
\item $\mathcal F_\sigma$ is smooth if and only if the graph $G_\sigma$ is $p_\sigma$-regular.
 Otherwise, the singular locus of $\mathcal F_\sigma$ is
$\coprod\limits_{\upsilon\in G_\sigma,\ a_{\sigma,\upsilon}>p_\sigma} \mathcal Z_{\upsilon}$.
\end{itemize}
\end{theorem}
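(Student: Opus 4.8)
\emph{Proof idea.}
The plan is to reduce everything to the tangent space formula of the first bullet, $\dim\mathcal T_{F_\upsilon}(\mathcal F_\sigma)=a_{\sigma,\upsilon}+d_0$, and to deduce the remaining assertions formally. Granting this formula, the inequality $a_{\sigma,\upsilon}\ge p_\sigma$ is immediate since $\dim\mathcal T_{F_\upsilon}(\mathcal F_\sigma)\ge\dim\mathcal F_\sigma=\dim\mathcal Z_\sigma=d_0+p_\sigma$ (using Theorem~\ref{thM007} and the definition of $p_\sigma$), and applying the formula at a point of the dense orbit $\mathcal Z_\sigma$ of $\mathcal F_\sigma=\overline{\mathcal Z_\sigma}$, where $\mathcal F_\sigma$ is automatically smooth, gives $a_{\sigma,\sigma}=p_\sigma$. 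Moreover $F_\upsilon$ is a smooth point of $\mathcal F_\sigma$ exactly when $\dim\mathcal T_{F_\upsilon}(\mathcal F_\sigma)=\dim\mathcal F_\sigma$, i.e. when $a_{\sigma,\upsilon}=p_\sigma$; since the tangent dimension depends only on the orbit $\mathcal Z_\upsilon$ (because $Z_x$ preserves $\mathcal F_\sigma$ and acts transitively on $\mathcal Z_\upsilon$), the singular locus is $Z_x$-stable and hence equals $\coprod_{\upsilon\in G_\sigma,\ a_{\sigma,\upsilon}>p_\sigma}\mathcal Z_\upsilon$, while $\mathcal F_\sigma$ is smooth iff $a_{\sigma,\upsilon}=p_\sigma$ for every $\upsilon\in G_\sigma$, i.e. iff every vertex of $G_\sigma$ has degree $p_\sigma$, i.e. iff $G_\sigma$ is $p_\sigma$-regular.

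\emph{Setting up the formula.}
First I would compute the tangent space of the ambient Springer fiber. Viewing $\mathcal F_x\subset\mathcal F$ and $\mathcal T_F(\mathcal F)=\mathfrak{gl}(V)/\mathfrak b_F$ with $\mathfrak b_F=\{\xi:\xi(V_i)\subseteq V_i\ \forall i\}$, differentiating the equations $x(V_i)\subseteq V_{i-1}$ along $(\mathrm{id}+\varepsilon\xi)F$ yields $\mathcal T_F(\mathcal F_x)=\{\bar\xi:[x,\xi](V_i)\subseteq V_{i-1}\ \forall i\}$. Taking $F=F_\upsilon$ to be the coordinate flag of a $\upsilon$-basis $(v_1,\dots,v_n)$, the space $\mathfrak{gl}(V)/\mathfrak b_{F_\upsilon}$ has basis $\{\bar E_{a,b}:a>b\}$, and since $x(v_{j_s})=v_{i_s}$ for $(i_s,j_s)\in\upsilon$ while $x$ annihilates the remaining basis vectors, $[x,E_{a,b}]$ is a signed sum of at most two matrix units; the membership condition then selects an explicit basis of $\mathcal T_{F_\upsilon}(\mathcal F_x)$ whose vectors are indexed by the fixed points of $\upsilon$ lying under an arc, the crossings of $\upsilon$, and pairs of ``free'' points of $\upsilon$. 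Next I would pass to the orbit closure: since $Z_x$ acts on $\mathcal F_\sigma$ with smooth locally closed orbit $\mathcal Z_\upsilon$, \'etale-locally $\mathcal F_\sigma\cong\mathcal Z_\upsilon\times S$ for a transverse slice $S$ with base point $o$, so $\dim\mathcal T_{F_\upsilon}(\mathcal F_\sigma)=\dim\mathcal Z_\upsilon+\dim\mathcal T_o(S)$. Using $\dim\mathcal Z_\upsilon=d_0+p_\upsilon$ (Theorem~\ref{thM007}), the equality $a_{\upsilon,\upsilon}=p_\upsilon$ --- a direct enumeration of the backward elementary moves out of $\upsilon$, which incidentally also proves the $a_{\sigma,\sigma}=p_\sigma$ clause --- and the purely combinatorial identity $a_{\sigma,\upsilon}-a_{\upsilon,\upsilon}=|\{\mu:\mu\ \text{a predecessor of}\ \upsilon,\ \mu\le\sigma\}|$ (the edges of $\upsilon$ in $G_\sigma$ that are not edges of $G_\upsilon$ are precisely those joining $\upsilon$ to its predecessors that still lie below $\sigma$), the formula reduces to the single geometric claim that $\dim\mathcal T_o(S)$ equals the number of predecessors $\mu$ of $\upsilon$ with $\mu\le\sigma$.

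\emph{The lower bound.}
For each predecessor $\mu$ of $\upsilon$ with $\mu\le\sigma$ one has $\mathcal F_\mu=\overline{\mathcal Z_\mu}\subseteq\mathcal F_\sigma$, and the elementary move relating $\upsilon$ and $\mu$ is realized by a one-parameter degeneration, namely an explicit rational curve $C_\mu\subset\mathcal F_\mu$ through $F_\upsilon$ along which exactly one subspace of the flag varies linearly. In the coordinates of the previous step its tangent vector at $F_\upsilon$ is a nonzero vector lying outside $\mathcal T_{F_\upsilon}(\mathcal Z_\upsilon)$, and the arc/point data of the move shows that distinct predecessors move distinct coordinate directions, so the corresponding tangent vectors are linearly independent; projecting them to the slice gives ``$\ge$'' in the claim above.

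\emph{The upper bound, and the main obstacle.}
The hard part will be the reverse inequality: that $\mathcal T_{F_\upsilon}(\mathcal F_\sigma)$ is \emph{spanned} by $\mathcal T_{F_\upsilon}(\mathcal Z_\upsilon)$ together with the curve directions just produced, equivalently that no further vector of the explicit basis of $\mathcal T_{F_\upsilon}(\mathcal F_x)$ remains tangent to the orbit closure $\mathcal F_\sigma$. I would try one of two routes. (a) \emph{Local equations}: by Theorem~\ref{thM007}(3) together with \cite{Tim1995}, $\mathcal F_\sigma$ is set-theoretically $\{F\in\mathcal F_x:\mu\le\sigma\ \text{whenever}\ F\in\mathcal Z_\mu\}$, which inside $\mathcal F_x$ is carved out by rank (minor) conditions refining the quantities $R_{[p,q]}$; one checks that near $F_\upsilon$ these minors define $\mathcal F_\sigma$ \emph{scheme-theoretically} and differentiates them at $F_\upsilon$ to obtain exactly enough independent linear forms to annihilate the unwanted directions. (b) \emph{Induction on $n$}: the forgetful map $F\mapsto V_1$ fibers $\mathcal F_\sigma$ over a projective space, with fibers orbit closures inside a square-zero Springer fiber in dimension $n-1$; the orbit $\mathcal Z_\upsilon$, the graph $G_\sigma$ and the numbers $a_{\sigma,\upsilon}$ all restrict compatibly, so the formula follows by induction once one establishes the additivity of the edge-counts along this fibration, which is the delicate bookkeeping step. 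In either route the crucial input is that, for $x^2=0$, $\mathcal F_x$ is paved by affine cells indexed by the $Z_x$-orbits and that its orbit closures meet as transversally as possible --- precisely the content of \S\ref{seclinkp} and Theorem~\ref{thM007}.
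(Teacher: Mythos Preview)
The paper does not prove this theorem at all: it is quoted verbatim from \cite{F2012} (see the line ``By \cite{F2012} one has'' immediately preceding the statement in \S\ref{secGG}). There is therefore no proof in the paper to compare your attempt against; the result is used as a black box throughout.

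That said, your outline is a faithful sketch of how Fresse's argument in \cite{F2012} goes, and your reduction of the second bullet to the tangent-space formula is correct and cleanly explained. Two remarks. First, the combinatorial identity you use, $a_{\sigma,\upsilon}-a_{\upsilon,\upsilon}=|\{\mu\ \text{predecessor of}\ \upsilon:\mu\le\sigma\}|$, is correct (edges from $\upsilon$ to things it is a predecessor of are automatically in $G_\sigma$ since they lie below $\upsilon\le\sigma$, and no predecessor of $\upsilon$ lies in $G_\upsilon$), but the companion claim $a_{\upsilon,\upsilon}=p_\upsilon$ is not a one-line enumeration: counting backward elementary moves out of an arbitrary $\upsilon$ and matching the answer to $\dim\mathcal Z_\upsilon-d_0$ is already a nontrivial lemma in \cite{F2012}. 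Second, your ``upper bound'' paragraph correctly identifies the hard step, but neither route (a) nor (b) is carried out; in particular, the assertion that the rank minors cut out $\mathcal F_\sigma$ \emph{scheme-theoretically} near $F_\upsilon$ is precisely the delicate point, and your sketch does not supply it. As written, your proposal is a good plan rather than a proof; for the actual argument you should consult \cite{F2012} directly.
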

For instance, one can see at Figure \ref{figgg1} that $\mathcal F_{(2,3)(4,5)}$ for $n=6$ is singular and its singular locus is
$\mathcal F_{(1,6)(2,5)}={\mathcal Z}_{(1,6)(2,5)}\sqcup {\mathcal Z}_{(1,5)(2,6)}.$ Indeed, $\dim \mathcal F_{(2,3)(4,5)}=7$ and $$\dim \mathcal T_{F_{(1,6)(2,5)}}
(\mathcal F_{(2,3)(4,5)})=\dim \mathcal T_{F_{(1,5)(2,6)}}(\mathcal F_{(2,3)(4,5)})=11.$$
Note that $\mathcal{F}_{(1,6)(2,5)}=\mathcal{Z}_{(1,6)(2,5)}\sqcup\mathcal{Z}_{(1,5)(2,6)}$ so that $\mathcal{F}_{(1,6)(2,5)}$ is the component of singular locus of $\mathcal{F}_{(2,3)(4,5)}$.

\section{Components of the singular locus for a given $\mathcal F_\sigma$}
\subsection{Admissible pair}\label{3.1}
Let $\sigma\in I_{n,k}$ and $1\leq s<t\leq n$, we say that $(i,j),(i',j')\in \sigma$ is an {\bf admissible pair} at $[s,t]$ if\index{Admissible pair}
\begin{itemize}
\item{} $s<i<j<i'<j'<t$;
\item{} $s,t\in \pi_{s,t}(\sigma)^0$;
\item{} for any $\ell: s<\ell<j$ or $i'<\ell<t$ one has $\ell\not\in \pi_{s,t}(\sigma)^0$, that is additional fixed points can be only at $[j,i']$;
\item{} for any $(r,q)\in \pi_{s,t}(\sigma)$ such that either $r<i<q$ or $r<j'<q$ or $r<j<i'<q$ one has $r<i$ and $j'<q$, that is, there are no arcs crossing $(i,j)$ or $(i',j')$ and every $(r,q)$ over either $(i,j)$ or $(i',j')$ is over both of them;
\item{} for any $(r,q),(r',q')\in \pi_{s,t}(\sigma)$ satisfying $r'<r<i<j'<q$, one has $q<q'$, that is, all the arcs over both $(i,j)$ and $(i',j')$ are concentric;
\item{} let $k$ be the number of fixed points at $[j,i']$; and $r$ the number of arcs over both $(i,j)$ and $(i',j')$, then $kr=0$.
\end{itemize}

For $\sigma\in I_{n,k}$ such that $\mathcal F_\sigma$ is singular let
$$Sing(\sigma):=\{\upsilon\ :\ \mathcal F_\upsilon\ {\rm is\ a\ component\ of\ the\ signular\ locus\ of}\ \mathcal F_\sigma\}.$$\index{$Sing(\sigma)$}

In order to formulate the main theorem we need the notions of maximal completion and concatenation, which we define in the next subsection.

\subsection{Maximal completion, concatenation and the main theorem}\label{maintheorem}
For $\sigma\in I_{n,k}$ let $\sigma_{+a}$ be obtained  by moving points $i \ :\ 1\leq i\leq n$   to $i+a$.\index{$\sigma_{+a}$, Link pattern obtained  by moving points $i \ :\ 1\leq i\leq n$  to $i+a$} That is $(i,j)\in\sigma$ is translated to $(i+a,j+a)\in\sigma_{+a}$. We can consider $\sigma_{+a}$ as an element of $I_{n+a,k}$, which means that we add $a$ fixed points to $\sigma$ on the left. We can consider $\sigma$ as an element of $I_{n+a,k}$ by adding $a$ fixed points on the right simply by writing $\sigma\in I_{n+a,k}$.

\begin{definition} Given $\sigma\in I_{n,k}$ with $2k<n$. Let $m_1=\min\, \sigma^0$ and $m_2=\max\, \sigma^0$. We call
$(1,m_1+1)\sigma_{+1}\in I_{n+1,k+1}$
{\bf $(1,0)$-maximal completion}\index{$(1,0)$-maximal completion} of $\sigma$ and $(m_2,n+1)\sigma\in I_{n+1,k+1}$ {\bf $(0,1)$-maximal completion} of $\sigma.$ If $\widehat\sigma\in I_{n+s_1+s_2,k+s_1+s_2}$ is obtained from $\sigma$ by series of $s_1$ maximal completions on the left and $s_2$ maximal completions on the right we  call it
{\bf $(s_1,s_2)$-maximal completion}.\index{$(s_1,s_2)$-maximal completion}
\end{definition}
Note that the order of completion is not important.
Since $\sigma\in I_{n,k}$ has $n-2k$ fixed points $(s,n-2k-s)$-maximal completions are the largest possible. We will call
$\sigma\in I_{2k,k}$ $k$-{\bf complete} link patterns.

\begin{definition}\label{def_cancatenation} For $\sigma\in I_n$ and $\upsilon\in I_m$ put $\sigma\upsilon_{+n}\in I_{n+m}$ to be their concatenation.\index{$\sigma\upsilon_{+n}$, Concatenation of $\sigma\in I_n$ and $\upsilon\in I_m$} It is obtained moving arcs of $\upsilon$ by $n$ and ``gluing'' the new set of arcs  to $\sigma$.
\end{definition}

Note that each maximal link pattern is a concatenation of complete maximal link patterns and fixed points between them.

\begin{theorem}\label{thm-main}
Consider $\sigma\in I_{n,k}^{\max}$ with $\rho(\sigma)\geq 4$. Let $S=\{(\{(i,j),(i',j')\},[s,t])\}$ be the set of all admissible pairs at all possible $[s,t]$. For  $x\in S$
let $a=\max(\sigma^0\cap[1,s])$ if it is not empty, and $a=0$ otherwise and $b=\min(\sigma^0\cap[t,n])$ if it is not empty and $b=n+1$ otherwise. Let $\upsilon=\pi_{s,t}(\sigma)^-_{(i,j),(i',j')}(i,j')(s,t)$.
\begin{itemize}
\item[(i)] If $a=s$ put $\upsilon_l=\pi_{1,a-1}(\sigma)$ and $l=0$ and $i=a-1$. Otherwise put
$\upsilon_l=\pi_{1,a}(\sigma)$ and $l=s-1-a$ and $i=a$;
\item[(ii)] If $b=t$ put $\upsilon_r=\pi_{b+1,n}(\sigma)$ and $r=0$ and $j=b$. Otherwise put $\upsilon_r=\pi_{b,n}(\sigma)$ and $r=b-t-1$ and $j=b-1$;
\end{itemize}
Let $\widehat\upsilon$ be $(l,r)$-completion of $\upsilon$ and $\upsilon(x)=\upsilon_l\widehat\upsilon_{+i}(\upsilon_r)_{+j}$. Then $Sing(\sigma)=\{\upsilon(x)\}_{x\in S}$. Moreover, $Sing(\sigma)$ contains  the unique element if and only if $\rho(\sigma)=4$.
\end{theorem}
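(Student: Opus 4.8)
The plan is to reduce the global statement to a purely local one about an interval, and then to establish the local statement by comparing edge-counts in the Fresse graph $G_\sigma$. First I would set up the reduction: since every maximal link pattern is a concatenation of $k$-complete pieces and isolated fixed points, and since by Theorem \ref{thM007}(iii) the poset structure (hence the graph $G_\sigma$ and all tangent-space dimensions $a_{\sigma,\upsilon}$) decomposes as a product over the concatenation blocks, I would argue that a component of the singular locus of $\mathcal F_\sigma$ must "live" inside one such block together with the fixed points immediately surrounding it; everything outside that block must agree with $\sigma$ (because altering an outside block can only decrease dimension without producing the edge-count excess $a_{\sigma,\upsilon}>p_\sigma$ needed by the last theorem in \S\ref{secGG}). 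This is exactly what the formula $\upsilon(x)=\upsilon_l\,\widehat\upsilon_{+i}\,(\upsilon_r)_{+j}$ encodes: $\upsilon_l$ and $\upsilon_r$ are the untouched left and right portions, and the action happens on the completed middle $\widehat\upsilon$. So the heart of the proof is to show that, within the interval $[s,t]$ with $s,t$ fixed points of $\pi_{s,t}(\sigma)$, the codimension-one orbits $\mathcal Z_{\upsilon'}\subset\mathcal F_{\pi_{s,t}(\sigma)}$ that are singular points of $\mathcal F_{\pi_{s,t}(\sigma)}$ are precisely those of the form $\pi_{s,t}(\sigma)^-_{(i,j),(i',j')}(i,j')(s,t)$ coming from admissible pairs.

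Second, I would prove the local characterization by the edge-counting criterion. Fix $\upsilon$ of the stated shape and compute $a_{\sigma,\upsilon}$, the number of predecessors and successors of $\upsilon$ inside $G_\sigma$, by classifying the elementary moves (of first and second type) available at $\upsilon$ and at $\sigma$. The admissibility conditions are tailored so that exactly this count exceeds $p_\sigma = a_{\sigma,\sigma}$: the condition $kr=0$ (no fixed points under both of $(i,j),(i',j')$ unless there are no concentric arcs over them, and vice versa) is precisely what makes the move pattern at $\upsilon$ "too rich" — giving an extra edge compared to the regular vertex $\sigma$ — while ruling out the degenerate configurations where the extra edges would cancel. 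Conversely, I would show that any $\upsilon'$ with $a_{\sigma,\upsilon'}>p_\sigma$ and $\mathrm{codim}=1$ forces the configuration to be, after passing to the relevant subinterval, an admissible pair: enumerate the codimension-one orbits below $\sigma$ (these are exactly $Cov(\sigma)$ by the remark following Theorem \ref{thM007}, and they come either from a single first-type or single second-type forward move), and check that in all non-admissible cases the graph is locally regular at that vertex. Then I would invoke the general fact that a component of a singular locus must be the closure of such a codimension-one singular orbit when $\rho(\sigma)=4$, and iterate/combine to get all components in general.

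Third, I would handle the maximal-completion bookkeeping. The reason one passes from $\upsilon$ on $[s,t]$ to its $(l,r)$-completion $\widehat\upsilon$ and then translates is that $\mathcal F_{\upsilon(x)}$ — to be an actual component of the singular locus, i.e. a maximal irreducible closed subset of it — must itself be of the form $\overline{\mathcal Z_{\upsilon(x)}}$ for a link pattern whose $Z_x$-orbit is dense in it; the fixed points between the touched block and its neighbours must be "absorbed" by forcing the new arcs $(s,t)$ and the flanking arcs outward as far as possible, which is exactly what maximal completion does, and the indices $a,b,i,j,l,r$ in (i)–(ii) just read off how many fixed points lie on each side and whether the neighbour already abuts the interval. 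I would verify that $\widehat\upsilon$ is the $\leq$-maximal link pattern whose orbit closure is contained in the singular locus and contains $\mathcal Z_{\upsilon'}$, which pins down $\upsilon(x)$ uniquely and shows the map $x\mapsto\upsilon(x)$ is well defined; injectivity/surjectivity onto $Sing(\sigma)$ then follows from the local classification.

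Finally, the uniqueness clause: $Sing(\sigma)$ is a singleton iff $\rho(\sigma)=4$. One direction is a counting argument — if $\rho(\sigma)=4$ then $|\tau^*(\sigma)|$ together with the endpoint conditions forces essentially one "minimal-arc pair" configuration, so $S$ has exactly one admissible pair (up to the interval choice, which is then forced); I would make this precise by showing $\rho(\sigma)\ge 5$ produces either two separated minimal arcs or enough internal structure to yield two distinct admissible pairs at incomparable intervals, hence two distinct $\upsilon(x)$. The main obstacle I anticipate is the exhaustive case analysis in the converse direction of the local step: proving that every codimension-one singular orbit arises from an admissible pair requires checking a genuinely large list of local configurations of arcs and fixed points near $[s,t]$, and keeping the edge-count argument uniform across first-type versus second-type moves (and across the three branches of the definition of $\rho$) is where the real work lies; by contrast the reduction to an interval and the completion bookkeeping, while notationally heavy, are conceptually routine once the product structure of $G_\sigma$ over concatenation blocks is established.
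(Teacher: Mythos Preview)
Your proposal contains a fundamental misconception that would derail the argument. You repeatedly speak of ``codimension-one orbits $\mathcal Z_{\upsilon'}\subset\mathcal F_{\pi_{s,t}(\sigma)}$ that are singular points'' and propose to ``enumerate the codimension-one orbits below $\sigma$ (these are exactly $Cov(\sigma)$)'' to find the singular ones. But the components of the singular locus are \emph{never} codimension one: in the basic local picture $\upsilon=(1,n)(i,j')\sigma^-_{(i,j)(i',j')}$ one computes $\mathrm{codim}_{\mathcal F_\sigma}\mathcal F_\upsilon=4+2(k+r)\geq 4$, and this lower bound of $4$ holds for every $\upsilon\in Sing(\sigma)$ (this is the content of the corollary immediately following the theorem). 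So your plan to locate $Sing(\sigma)$ by scanning $Cov(\sigma)$ cannot work; you would find nothing there, and the link pattern $\pi_{s,t}(\sigma)^-_{(i,j),(i',j')}(i,j')(s,t)$ is not a codimension-one orbit to begin with.

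The forward direction (admissible pairs give singular components) does proceed by an edge count as you suggest, but it requires two separate computations: first that $a_{\sigma,\upsilon}>p_\sigma$ at $\upsilon$ itself, and second that $a_{\sigma,\upsilon'}=p_\sigma$ for \emph{every} $\upsilon'$ strictly between $\upsilon$ and $\sigma$ (this is where the condition $kr=0$ is actually used---it is needed for maximality, not for singularity of $\upsilon$). The converse direction is where the real difficulty lies, and your sketch does not touch the key mechanism the paper uses: one first proves the non-obvious combinatorial fact that for $\sigma$ maximal and any $\omega\in G_\sigma\setminus\{\sigma\}$, out of each pair of predecessors of $\omega$ at least one lies in $G_\sigma$; this yields the criterion that $\mathcal F_\omega$ is singular in $\mathcal F_\sigma$ iff \emph{both} predecessors of some elementary move at $\omega$ lie in $G_\sigma$. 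From there one analyzes the structure of any $\omega\in Sing(\sigma)$ possessing such a ``double predecessor'' (showing, e.g., that the arcs over the relevant fixed point are concentric and that nothing else crosses or bridges), and deduces inductively that $\omega$ must be of the stated form. None of this is visible in your codimension-one scan, and the product-over-blocks decomposition of $G_\sigma$ you invoke, while true for concatenation, does not by itself handle the completion step---the paper uses a geometric fiber-bundle argument over $\mathbb P^k$ for that.
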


For instance, there are $5$ elements in $Sing(\sigma)$ for $\sigma=(2,3)(4,5)(6,7)\in I_{8,3}^{\max}$, see Figure \ref{c2fig002}.
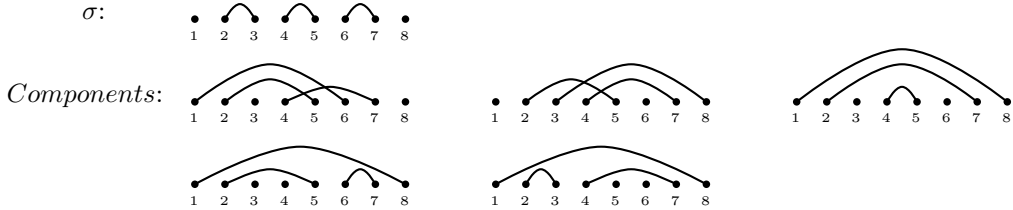
\begin{figure}[htp]
\begin{center}
\begin{pspicture}(0,-1.2)(9,1.6)
\put(0,1.1){\put(-1.5,0){$\sigma$:}
\pscircle*(0,0){0.05}\pscircle*(.4,0){0.05}\pscircle*(.8,0){0.05}
\pscircle*(1.2,0){0.05}\pscircle*(1.6,0){0.05}\pscircle*(2,0){0.05}
\pscircle*(2.4,0){0.05}\pscircle*(2.8,0){0.05}
\put(-.07,-.27){\tiny$1$}\put(.33,-.27){\tiny$2$}\put(.73,-.27){\tiny$3$}
\put(1.13,-.27){\tiny$4$}\put(1.53,-.27){\tiny$5$}\put(1.93,-.27){\tiny$6$}
\put(2.33,-.27){\tiny$7$}\put(2.73,-.27){\tiny$8$}
\pscurve(.4,0)(.6,.2)(.8,0)\pscurve(1.2,0)(1.4,.2)(1.6,0)\pscurve(2,0)(2.2,.2)(2.4,0)}
\put(0,0){\put(-2.5,0){$Components$:}
\pscircle*(0,0){0.05}\pscircle*(.4,0){0.05}\pscircle*(.8,0){0.05}
\pscircle*(1.2,0){0.05}\pscircle*(1.6,0){0.05}\pscircle*(2,0){0.05}
\pscircle*(2.4,0){0.05}\pscircle*(2.8,0){0.05}
\put(-.07,-.27){\tiny$1$}\put(.33,-.27){\tiny$2$}\put(.73,-.27){\tiny$3$}
\put(1.13,-.27){\tiny$4$}\put(1.53,-.27){\tiny$5$}\put(1.93,-.27){\tiny$6$}
\put(2.33,-.27){\tiny$7$}\put(2.73,-.27){\tiny$8$}
\pscurve(0,0)(1,.5)(2,0)\pscurve(.4,0)(1,.3)(1.6,0)\pscurve(1.2,0)(1.8,.2)(2.4,0)}
\put(4,0){\pscircle*(0,0){0.05}\pscircle*(.4,0){0.05}\pscircle*(.8,0){0.05}
\pscircle*(1.2,0){0.05}\pscircle*(1.6,0){0.05}\pscircle*(2,0){0.05}
\pscircle*(2.4,0){0.05}\pscircle*(2.8,0){0.05}
\put(-.07,-.27){\tiny$1$}\put(.33,-.27){\tiny$2$}\put(.73,-.27){\tiny$3$}
\put(1.13,-.27){\tiny$4$}\put(1.53,-.27){\tiny$5$}\put(1.93,-.27){\tiny$6$}
\put(2.33,-.27){\tiny$7$}\put(2.73,-.27){\tiny$8$}
\pscurve(.4,0)(1,.3)(1.6,0)\pscurve(.8,0)(1.8,.5)(2.8,0)\pscurve(1.2,0)(1.8,.3)(2.4,0)}
\put(8,0){\pscircle*(0,0){0.05}\pscircle*(.4,0){0.05}\pscircle*(.8,0){0.05}
\pscircle*(1.2,0){0.05}\pscircle*(1.6,0){0.05}\pscircle*(2,0){0.05}
\pscircle*(2.4,0){0.05}\pscircle*(2.8,0){0.05}
\put(-.07,-.27){\tiny$1$}\put(.33,-.27){\tiny$2$}\put(.73,-.27){\tiny$3$}
\put(1.13,-.27){\tiny$4$}\put(1.53,-.27){\tiny$5$}\put(1.93,-.27){\tiny$6$}
\put(2.33,-.27){\tiny$7$}\put(2.73,-.27){\tiny$8$}
\pscurve(0,0)(1.4,.7)(2.8,0)\pscurve(.4,0)(1.4,.5)(2.4,0)\pscurve(1.2,0)(1.4,.2)(1.6,0)}
\put(0,-1.1){\pscircle*(0,0){0.05}\pscircle*(.4,0){0.05}\pscircle*(.8,0){0.05}
\pscircle*(1.2,0){0.05}\pscircle*(1.6,0){0.05}\pscircle*(2,0){0.05}
\pscircle*(2.4,0){0.05}\pscircle*(2.8,0){0.05}
\put(-.07,-.27){\tiny$1$}\put(.33,-.27){\tiny$2$}\put(.73,-.27){\tiny$3$}
\put(1.13,-.27){\tiny$4$}\put(1.53,-.27){\tiny$5$}\put(1.93,-.27){\tiny$6$}
\put(2.33,-.27){\tiny$7$}\put(2.73,-.27){\tiny$8$}
\pscurve(0,0)(1.4,.5)(2.8,0)\pscurve(.4,0)(1,.2)(1.6,0)\pscurve(2,0)(2.2,.2)(2.4,0)}
\put(4,-1.1){\pscircle*(0,0){0.05}\pscircle*(.4,0){0.05}\pscircle*(.8,0){0.05}
\pscircle*(1.2,0){0.05}\pscircle*(1.6,0){0.05}\pscircle*(2,0){0.05}
\pscircle*(2.4,0){0.05}\pscircle*(2.8,0){0.05}
\put(-.07,-.27){\tiny$1$}\put(.33,-.27){\tiny$2$}\put(.73,-.27){\tiny$3$}
\put(1.13,-.27){\tiny$4$}\put(1.53,-.27){\tiny$5$}\put(1.93,-.27){\tiny$6$}
\put(2.33,-.27){\tiny$7$}\put(2.73,-.27){\tiny$8$}
\pscurve(0,0)(1.4,.5)(2.8,0)\pscurve(.4,0)(.6,.2)(.8,0)\pscurve(1.2,0)(1.8,.2)(2.4,0)}
\end{pspicture}
\caption{Link patterns $\sigma=(2,3)(4,5)(6,7)\in I_{8,3}^{\max}$ and its components}\label{c2fig002}
\end{center}
\end{figure}

As we show in Subsection \ref{sect_remarks} for $\sigma\in I_{n,k}^{\max}$ such that $\rho(\sigma)>4$ the number of elements in $Sing(\sigma)$ depends not only on $\rho(\sigma)$ but on the structure of $\sigma$, and this happens exactly because the same pair can be admissible at different intervals, depending on the structure of $\sigma$. In particular, if $\rho(\sigma)=5$ then the number of elements in $Sing(\sigma)$ is between 3 and 5.

To prove this theorem we  start in Section \ref{basic} with construction of $\upsilon\in Sing(\sigma)$ using an admissible pair on $[1,n]$, then in Section \ref{gencase} we show  that for $\sigma\in I_{2k,k}$ a maximal $(s-1,n-t)$-completion of  $\upsilon\in Sing(\pi_{s,t}(\sigma))$ is in $Sing(\sigma)$ and that for $\sigma\in I_{n,k}$, $\upsilon\in Sing(\sigma)$ and  $\sigma'\in I_{m,k}$ concatenation $\upsilon\sigma'_{+n}\in Sing(\sigma\sigma'_{+n})$ and $\sigma'\upsilon_{+m}\in Sing(\sigma'\sigma_{+m})$. Finally, in Section
\ref{sec2.4} we show that for $\sigma\in I_{n,k}^{\max}$ with $\rho(\sigma)\geq 4$,  $\upsilon\in Sing(\sigma)$ iff it is obtained by the algorithm above.

As an immediate corollary of this result and its proof we get

\begin{corollary} For $\sigma\in I_{n,k}^{\max}$ with $\rho(\sigma)\geq 4$ and for $\upsilon\in Sing(\sigma)$, one has
$${\rm codim}_{\mathcal F_\sigma}\mathcal F_\upsilon\geq 4\quad{and}\quad \dim \mathcal T_{F_\upsilon}(\mathcal F_\sigma)=
{\rm codim}_{\mathcal F_\sigma}\mathcal F_\upsilon+\dim\mathcal F_\sigma.$$
\end{corollary}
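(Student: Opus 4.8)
The plan is to read both assertions off Theorem~\ref{thm-main}, Theorem~\ref{thM007}, and the formula $\dim\mathcal T_{F_\upsilon}(\mathcal F_\sigma)=a_{\sigma,\upsilon}+d_0$ of \cite{F2012}. Since $\sigma\in I_{n,k}^{\max}$ we have $b(\sigma)=c(\sigma)=0$, hence $\dim\mathcal F_\sigma=\binom{n-k}{2}+\binom{k}{2}=p_\sigma+d_0$, while for $\upsilon\in Sing(\sigma)$ Theorem~\ref{thM007} gives $\dim\mathcal F_\upsilon=\binom{n-k}{2}+\binom{k}{2}-b(\upsilon)-c(\upsilon)$, so ${\rm codim}_{\mathcal F_\sigma}\mathcal F_\upsilon=b(\upsilon)+c(\upsilon)$. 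Thus the second display of the corollary is equivalent to $a_{\sigma,\upsilon}=p_\sigma+b(\upsilon)+c(\upsilon)$ and the first to $b(\upsilon)+c(\upsilon)\ge 4$, and it suffices to establish these two statements about $b(\upsilon)+c(\upsilon)$. The identity $a_{\sigma,\upsilon}=p_\sigma+b(\upsilon)+c(\upsilon)$ is precisely the computation that is needed inside the proof of Theorem~\ref{thm-main}: to apply Fresse's criterion and conclude that $\mathcal Z_{\upsilon(x)}$ lies in, and is maximal in, the singular locus of $\mathcal F_\sigma$, one evaluates $a_{\sigma,\upsilon(x)}$, and the value that comes out is $p_\sigma$ plus the codimension. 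So for this half I would simply record the identity from Section~\ref{sec2.4}.

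For $b(\upsilon)+c(\upsilon)\ge 4$ I would argue structurally from the recipe $\upsilon(x)=\upsilon_l\,\widehat\upsilon_{+i}\,(\upsilon_r)_{+j}$. Both $b$ and $c$ are additive along a concatenation, because arcs of distinct blocks lie in disjoint intervals, and $\upsilon_l,\upsilon_r$ are restrictions of the maximal pattern $\sigma$, hence themselves maximal with $b=c=0$; therefore $b(\upsilon(x))+c(\upsilon(x))=b(\widehat\upsilon)+c(\widehat\upsilon)$. A single maximal completion turns the extremal fixed point $m$ of the current pattern into an endpoint and attaches the new outermost arc on that side: every arc that used to bridge over $m$ loses exactly that bridge, whereas the new arc crosses precisely those same arcs, so $b+c$ is unchanged; iterating, $b(\widehat\upsilon)+c(\widehat\upsilon)=b(\upsilon)+c(\upsilon)$ with $\upsilon=\pi_{s,t}(\sigma)^-_{(i,j),(i',j')}(i,j')(s,t)$. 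In $\upsilon$ the points $j$ and $i'$ have become fixed points, and since $s<i<j<i'<j'<t$ both of the new arcs $(s,t)$ and $(i,j')$ bridge over each of them, which already yields $b(\upsilon)\ge 4$. Running through the six conditions defining an admissible pair, every remaining arc of $\pi_{s,t}(\sigma)$ is confined to one of the intervals $(s,i),(i,j),(j,i'),(i',j'),(j',t)$ --- none crosses $(i,j)$ or $(i',j')$, and each arc lying over one of them is a concentric arc over both --- so it adds nothing, except that if there are $k\ge1$ fixed points inside $(j,i')$ then $(s,t)$ and $(i,j')$ bridge over all $k+2$ fixed points of $\upsilon$, and if there are $r\ge1$ concentric arcs over both then each of them bridges over $j$ and $i'$; in either case $b(\upsilon)\ge 6$, and $b(\upsilon)+c(\upsilon)=4$ exactly when $k=r=0$.

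The step I expect to be the main obstacle is the last one: checking, from the admissibility conditions, that the "core'' $\upsilon$ carries no hidden bridges or crossings beyond those enumerated above, and that the one-for-one trade between a lost bridge and a gained crossing in a maximal completion persists through all $l$ (resp.\ $r$) iterations as the extremal fixed point keeps changing. Both are local verifications that should already be available among the lemmas of Sections~\ref{basic}--\ref{gencase}. Granting them, the first display of the corollary is the inequality just proved, and the second is the bundling of $\dim\mathcal T_{F_\upsilon}(\mathcal F_\sigma)=a_{\sigma,\upsilon}+d_0$, $a_{\sigma,\upsilon}=p_\sigma+b(\upsilon)+c(\upsilon)$ and $\dim\mathcal F_\sigma=p_\sigma+d_0$.
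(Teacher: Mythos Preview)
Your approach is correct and is essentially the one the paper intends: the corollary is stated without proof as ``an immediate corollary of this result and its proof,'' and the ingredients you invoke are exactly Proposition~\ref{prop1} (formula~$(*)$ gives ${\rm codim}=4+2(k+r)\ge 4$ and the tangent computation gives $\dim\mathcal T_{F_\upsilon}(\mathcal F_\sigma)-\dim\mathcal F_\sigma=(k+1)(2r+2)+2=4+2(k+r)$ once $kr=0$), together with Propositions~\ref{prop_compl} and~\ref{prop3.17} (completion and concatenation preserve both the codimension and the difference $\dim\mathcal T-\dim\mathcal F_\sigma$), and Theorem~\ref{thm-main} itself (every $\upsilon\in Sing(\sigma)$ arises this way). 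One small correction: the identity $a_{\sigma,\upsilon}=p_\sigma+b(\upsilon)+c(\upsilon)$ is established in Sections~\ref{basic}--\ref{gencase}, not Section~\ref{sec2.4}; the latter only shows that the algorithm is exhaustive, which you need in order to know that \emph{every} $\upsilon\in Sing(\sigma)$ reduces to the basic case.
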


\section{Proof of Theorem \ref{thm-main}}
\subsection{Basic case}\label{basic} Let us first show the following basic case

\begin{theorem}\label{thm_basic}
Let $\sigma$ be a link pattern such that $(i,j),(i',j')$ is an admissible pair at $[1,n]$. Put $\upsilon:=(1,n)(i,j')\sigma^-_{(i,j)(i',j')}$. Then $\mathcal F_\upsilon$ is a component of the singular locus of $\mathcal F_\sigma.$
\end{theorem}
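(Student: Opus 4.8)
The plan is to compute the tangent space dimension $\dim\mathcal T_{F_\upsilon}(\mathcal F_\sigma)$ and compare it with $p_\sigma=\dim\mathcal Z_\sigma-d_0$, invoking the criterion of Fresse (the theorem of Section \ref{secGG}): $\mathcal Z_\upsilon$ lies in the singular locus iff $a_{\sigma,\upsilon}>p_\sigma$, i.e.\ iff the number of edges at $\upsilon$ in $G_\sigma$ strictly exceeds the (constant) number of edges at the generic vertex $\sigma$. So I must (a) check $\upsilon\le\sigma$, so that $\upsilon\in G_\sigma$ at all; (b) count $a_{\sigma,\upsilon}$, the number of predecessors/successors of $\upsilon$ lying below $\sigma$; (c) show $a_{\sigma,\upsilon}>p_\sigma$; and (d) show $\mathcal F_\upsilon$ is not merely \emph{in} the singular locus but is an irreducible \emph{component} of it, i.e.\ it is maximal among the $\mathcal Z_{\omega}$ with $a_{\sigma,\omega}>p_\sigma$ — equivalently, every $\omega$ with $\omega>\upsilon$ and $\omega\le\sigma$ satisfies $a_{\sigma,\omega}=p_\sigma$.

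For step (a), $\upsilon=(1,n)(i,j')\sigma^-_{(i,j)(i',j')}$ is obtained from $\sigma$ by a single elementary move forward of the second type applied to the pair $(i,j),(i',j')$ (producing $(i,j')(j,i')\sigma^-_{(i,j)(i',j')}$) followed by further forward moves that pull $j$ down to $1$ and $i'$ up to $n$; the admissibility hypotheses (no crossings of $(i,j)$ or $(i',j')$, all arcs over one are over both and concentric, fixed points only in $[j,i']$, and the $kr=0$ condition) are exactly what is needed to guarantee these are legal elementary moves and hence $\upsilon<\sigma$. I would make this chain of moves explicit. For steps (b)–(c), I would enumerate the elementary moves available at $\upsilon$: since $\upsilon$ contains the ``long'' arc $(1,n)$ together with the nested arc $(i,j')$ and whatever was over/under the original pair, it has many backward moves (splitting $(1,n)$ against the many fixed points and arcs it now bridges) and these produce the ``extra'' edges. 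The bookkeeping is cleanest via the matrix $R(\sigma)$ and Theorem \ref{thM007}: $\upsilon\le\omega\le\sigma$ translates to entrywise matrix inequalities, and $a_{\sigma,\upsilon}=\#\{\omega\in Cov(\upsilon):\omega\le\sigma\}+\#\{\omega:\upsilon\in Cov(\omega),\ \omega\le\sigma\}$ by the cover/codimension-one correspondence recorded after Theorem \ref{thM007}. One then checks the strict inequality by exhibiting enough edges at $\upsilon$ that are not present at the generic $\sigma$ — concretely, the backward first-type moves on $(1,n)$ over the fixed points and the backward second-type moves of $(1,n)$ against the concentric arcs.

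For step (d), the maximality, I would argue that any $\omega$ with $\upsilon<\omega\le\sigma$ is obtained from $\upsilon$ by a backward move, and then directly verify $a_{\sigma,\omega}=p_\sigma$ for each such $\omega$; the structural constraints from admissibility (in particular the $kr=0$ dichotomy, which separates the ``only fixed points between'' case from the ``only concentric arcs'' case) keep this case analysis finite and uniform. The main obstacle I anticipate is step (b)/(d): the edge count at $\upsilon$ and the verification that no vertex strictly above $\upsilon$ is again singular require a careful, somewhat delicate combinatorial analysis of elementary moves in the two regimes, and getting a clean closed form for $a_{\sigma,\upsilon}$ that visibly beats $p_\sigma$ while all intermediate vertices stay at $p_\sigma$ is where the real work lies — this is presumably why the authors defer ``all the technical details of the proof'' to Section \ref{gencase} and the later sections.
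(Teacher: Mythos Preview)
Your overall plan matches the paper's proof exactly: the paper splits the theorem into Proposition~\ref{prop1} (your steps (a)--(c): compute ${\rm codim}_{\mathcal F_\sigma}\mathcal F_\upsilon=4+2(k+r)$, count predecessors of $\upsilon$ lying under $\sigma$, conclude $\dim\mathcal T_{F_\upsilon}(\mathcal F_\sigma)=\dim\mathcal F_\sigma+(k+1)(2r+2)+2$) and Proposition~\ref{prop2} (your step (d): check that every $\upsilon'$ with $\upsilon<\upsilon'\le\sigma$ is a smooth point, reducing to codimension-one predecessors and using $kr=0$ at the end). So the strategy is right.

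There are, however, two genuine technical confusions that would derail the computation as you have written it. First, the edges of $G_\sigma$ are \emph{elementary moves} (predecessors in the sense of \S\ref{secGG}), not covers in the partial order; an elementary move $(a,b)\mapsto(a,f)$ to an arbitrary fixed point $f$ under $(a,b)$ can change codimension by much more than $1$. Your formula $a_{\sigma,\upsilon}=\#\{\omega\in Cov(\upsilon):\omega\le\sigma\}+\#\{\omega:\upsilon\in Cov(\omega),\omega\le\sigma\}$ is therefore wrong, and counting via $R$-matrix covers will not give the tangent dimension. The paper instead uses $\dim\mathcal T_{F_\upsilon}(\mathcal F_\sigma)=\dim\mathcal F_\upsilon+\#\{\text{predecessors of }\upsilon\text{ that are }\le\sigma\}$ and enumerates those predecessors directly (types (i) and (ii) in the proof of Proposition~\ref{prop1}, giving $(k+2)(2r+4)$). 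Second, your description of step (a) invokes a second-type elementary move on the pair $(i,j),(i',j')$, but these arcs satisfy $i<j<i'<j'$ and do not cross, so no such move exists; the inclusion $\upsilon\le\sigma$ is instead read off from the $R$-matrix (or, equivalently, from a chain of first-type moves on $(i,j')$ and $(1,n)$ using the fixed points $j,i'$). Finally, in step (d) it is not true that every $\omega$ with $\upsilon<\omega\le\sigma$ is one elementary move from $\upsilon$; the paper's reduction is that it \emph{suffices} to check codimension-one predecessors of $\upsilon$, by monotonicity of the tangent dimension along chains.
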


Graphically one has (where all other points and arcs are drawn by points)
\begin{center}
\begin{picture}(150,60)(80,20)
\put(-40,35){$\sigma=$}
\multiput(-10,40)(20,0){6} {\circle*{3}}
\multiput (-5,40)(5,0){19}{\circle*{1}}
\put(-12,25){$1$}\put(8,25){$i$}\put(28,25){$j$}
\put(48,25){$i'$}\put(68,25){$j'$}\put(88,25){$n$}
\qbezier(10,40)(20,70)(30,40)\qbezier(50,40)(60,70)(70,40)
\put(120,35){$\upsilon=$}
\multiput(150,40)(20,0){6} {\circle*{3}} 
\multiput (155,40)(5,0){19}{\circle*{1}}
\put(148,25){$1$}\put(168,25){$i$}
\put(188,25){$j$}\put(208,25){$i'$}\put(228,25){$j'$}\put(248,25){$n$}
\qbezier(150,40)(200,110)(250,40)\qbezier(170,40)(200,80)(230,40)
\end{picture}
\end{center}
To prove the theorem we show in Proposition \ref{prop1} below that if $(i,j),(i',j')$ is an admissible pair then $F_\upsilon$ is a singular point of  $\mathcal F_\sigma$, and then in Proposition \ref{prop2} we show that for any $\upsilon'\ :\ \upsilon<\upsilon'<\sigma$ one has  $\dim \mathcal T_{F_\upsilon'}(\mathcal F_\sigma)=\dim \mathcal F_\sigma$ so that $\upsilon\in Sing(\sigma)$.

\begin{proposition}\label{prop1}
Let $\sigma\in I_{n,k}^{\max}$ be a link pattern such that $(i,j),(i',j')$ is an admissible pair at $[1,n]$. Put $\upsilon=(1,n)(i,j')\sigma^-_{(i,j)(i',j')}$. Then $F_\upsilon$ is a singular point of $\mathcal{F}_\sigma$. Let
\begin{itemize}
\item{} $k$ be the number of fixed points of $\sigma$ on $[j,i']$
\item{} $R$ be the set of arcs of $\sigma$ over both $(i,j)$ and $(i',j')$ that is  $R:=\{(s,t)\in \sigma\ :\ s<i,\ j'<t\}$, and $r=|R|$.
\end{itemize}
Then
$$\dim\mathcal{T}_{F_\upsilon}(\mathcal F_\sigma)=\dim\mathcal{F}_\sigma+(k+1)(2r+2)+2.$$
\end{proposition}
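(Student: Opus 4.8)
The plan is to establish Proposition \ref{prop1} by the standard two-step method for detecting singular points of Springer fiber components: first produce an explicit flag $F_\upsilon$ realizing the orbit $\mathcal{Z}_\upsilon$, then compute the dimension of the tangent space $\mathcal{T}_{F_\upsilon}(\mathcal{F}_\sigma)$ combinatorially and compare it with $\dim\mathcal{F}_\sigma=\binom{n-k}{2}+\binom{k}{2}-b(\sigma)-c(\sigma)$. Since $\sigma\in I_{n,k}^{\max}$ we have $b(\sigma)=c(\sigma)=0$, so it suffices to show $\dim\mathcal{T}_{F_\upsilon}(\mathcal F_\sigma)>\dim\mathcal F_\sigma$, and in fact to pin down the exact excess $(k+1)(2r+2)+2$. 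By the theorem of Fresse quoted above, $\dim\mathcal{T}_{F_\upsilon}(\mathcal F_\sigma)=a_{\sigma,\upsilon}+d_0$ where $a_{\sigma,\upsilon}$ is the number of edges at the vertex $\upsilon$ in the graph $G_\sigma$, and $\dim\mathcal F_\sigma=p_\sigma+d_0=a_{\sigma,\sigma}+d_0$; so the whole proposition reduces to the purely combinatorial count
$$a_{\sigma,\upsilon}-a_{\sigma,\sigma}=(k+1)(2r+2)+2.$$
Thus the real content is: first, verify $\upsilon\le\sigma$ (so that $\upsilon\in G_\sigma$) by checking $R(\upsilon)\le R(\sigma)$ entrywise — this is a direct, if slightly tedious, comparison of the matrices $R_{[p,q]}$ using the admissibility hypotheses (no crossings of $(i,j),(i',j')$, all bridging arcs concentric over both, and the condition $kr=0$); second, enumerate the predecessors of $\upsilon$ that lie below $\sigma$ and the predecessors of $\sigma$, and count both sets.

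The main work, and the main obstacle, is the edge count at $\upsilon$. The edges at $\upsilon$ in $G_\sigma$ come in two flavors: elementary moves backward from $\upsilon$ (predecessors of $\upsilon$, which automatically lie below $\sigma$ since they are $>\upsilon$ but one must check they are $\le\sigma$), and elementary moves forward from $\upsilon$ (i.e. $\upsilon$ is a predecessor of something $\le\sigma$). I would organize the count around the two ``long'' arcs of $\upsilon$, namely $(1,n)$ and $(i,j')$, together with the $k$ fixed points on $[j,i']$ and the $r$ concentric arcs in $R$. A forward move of the first type on $(1,n)$ over any fixed point $f$ under it contributes edges; a forward move of the first type on $(i,j')$ over each of the $k$ fixed points on $[j,i']$ contributes two edges each (splitting left or right), giving roughly $2k$; second-type forward moves pairing $(i,j')$ with each concentric arc in $R$, and pairing $(1,n)$ with arcs nested appropriately, contribute the terms involving $r$; the ``$+2$'' and the multiplicative structure $(k+1)(2r+2)$ should emerge from the interaction between $(1,n)$, $(i,j')$, the $k$ fixed points and the $r$ arcs — essentially the extra tangent directions are indexed by pairs (a choice among $k+1$ ``slots'' between/around the fixed points) $\times$ (a choice among $2r+2$ ways of interacting with the $r$ concentric arcs and the two endpoints), plus a final $+2$. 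Because the hypothesis $kr=0$ forces either $k=0$ or $r=0$, the formula $(k+1)(2r+2)+2$ degenerates to either $2r+4$ or $2k+2$ in practice, which should make the bookkeeping manageable: I would split into the two cases $r=0$ and $k=0$ and count edges separately in each, then check the uniform formula holds.

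Concretely, the steps in order are: (1) write down $F_\upsilon$ explicitly in a $\upsilon$-basis adapted to $\sigma$'s arc structure, or simply invoke the combinatorial description of $G_\sigma$ so that no actual flag geometry is needed; (2) prove $\upsilon\le\sigma$ by the matrix inequality $R(\upsilon)\le R(\sigma)$, using admissibility; (3) compute $a_{\sigma,\sigma}=p_\sigma=\binom{n-k}{2}-\binom{n-2k}{2}$ (here $b(\sigma)=c(\sigma)=0$) — this is the regular edge count at the dense vertex and should follow from a known formula or a short direct argument; (4) enumerate all elementary moves (forward and backward, of both types) incident to $\upsilon$, discard those producing a link pattern not $\le\sigma$, and count the rest, carefully using admissibility to see exactly which moves survive; (5) subtract and verify the difference equals $(k+1)(2r+2)+2$, handling the cases $k=0$ and $r=0$ separately if needed; (6) conclude $\dim\mathcal{T}_{F_\upsilon}(\mathcal F_\sigma)>\dim\mathcal F_\sigma$, so $F_\upsilon$ is a singular point. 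The delicate point throughout is step (4): one must be sure not to double count edges (a single edge $\{\upsilon,\upsilon'\}$ might be reachable by a forward move on one description and a backward move on another) and not to miss edges coming from moves that act ``far away'' from the arcs $(i,j),(i',j')$ but are constrained by the global shape of $\sigma$; the admissibility conditions (fixed points confined to $[j,i']$, concentricity, no crossings) are exactly what make this enumeration finite and clean, and I expect most of the proof's length to be spent justifying, case by case on the type of move, that the produced pattern is or is not $\le\sigma$.
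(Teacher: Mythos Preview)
Your reduction to the combinatorial identity $a_{\sigma,\upsilon}-a_{\sigma,\sigma}=(k+1)(2r+2)+2$ is correct, and in principle your plan would succeed, but you are making the bookkeeping far harder than necessary and your sketch shows some confusion that would likely derail the execution.

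The paper does \emph{not} count all edges at $\upsilon$. It uses instead the equivalent identity
\[
\dim\mathcal T_{F_\upsilon}(\mathcal F_\sigma)=\dim\mathcal F_\upsilon+\#\{\text{predecessors of }\upsilon\text{ that are }\le\sigma\},
\]
which absorbs every ``downward'' edge at $\upsilon$ into the term $\dim\mathcal F_\upsilon$. This collapses your steps (3)--(5) into two short computations. First, one reads off ${\rm codim}_{\mathcal F_\sigma}\mathcal F_\upsilon=4+2(k+r)$ directly from the bridge/crossing counts: the arcs of $\sigma^-_{(i,j)(i',j')}$ keep their crossing numbers, each arc in $R$ picks up two new fixed points $j,i'$ underneath, and $(i,j')$ and $(1,n)$ each cover the $k+2$ fixed points of $\upsilon$. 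Second, one counts only the predecessors of $\upsilon$ that lie below $\sigma$: since $\upsilon$ has no crossings, every predecessor comes from a first-type move on one of the $r+2$ concentric arcs $\{(1,n),(i,j')\}\cup R$ over one of the $k+2$ fixed points in $[j,i']$, giving exactly $(k+2)(r+2)$ predecessors of the form $(s,t)\mapsto(s,t')$ and $(k+2)(r+2)$ of the form $(s,t)\mapsto(s',t)$, for a total of $(k+2)(2r+4)$. Then $\dim\mathcal T_{F_\upsilon}(\mathcal F_\sigma)=\dim\mathcal F_\sigma-(4+2(k+r))+(k+2)(2r+4)=\dim\mathcal F_\sigma+(k+1)(2r+2)+2$.

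Several things in your plan are unnecessary or off. You have the directions of ``forward'' and ``backward'' moves reversed relative to the paper's conventions, and your remark that predecessors ``automatically lie below $\sigma$'' is false (that is exactly what must be checked). Your proposed case split on $kr=0$ is not needed: the count above is uniform, and the hypothesis $kr=0$ plays no role in this proposition (it matters only in the next one). Your degenerate formula for $r=0$ should be $2k+4$, not $2k+2$. Finally, your worries about double counting and ``far-away'' moves evaporate once you restrict attention to predecessors only: $\upsilon$ has no crossings, all its bridges sit over $[j,i']$, and admissibility forces every such bridge to belong to $\{(1,n),(i,j')\}\cup R$, so the enumeration is complete by inspection.
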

\begin{proof}
At first, let us show
$${\rm codim}_{\mathcal F_\sigma}\mathcal{F}_\upsilon=4+2(k+r).\eqno{(*)}$$
\begin{itemize}
\item{} For $(s,t)\in\sigma^-_{(i,j)(i',j')}$ one has
$c_\upsilon^\ell(s,t)=c_\sigma^\ell(s,t)$ because there are no arcs crossing $(i,j)$ and $(i',j')$.
\item{} For $(s,t)\in\sigma^-_{(i,j)(i',j')}$ one has
$$b_\upsilon(s,t)=\left\{\begin{array}{ll} b_\sigma(s,t)+2&{\rm if}\ (s,t)\in R;\\
                                       b_\sigma(s,t)&{\rm otherwise};\\
\end{array}\right.$$
\item{} $c_\upsilon^\ell(i,j')=c_\sigma^\ell(i,j)=c_\upsilon^\ell(1,n)=0$;
\item{} $b_\upsilon(i,j')=b_\upsilon(1,n)=k+2$ and $b_\sigma(i,j)=b_\sigma(i',j')=0$.
\end{itemize}
Thus
\begin{align*}
b(\upsilon)+c(\upsilon)&=b_\upsilon(i,j')+b_\upsilon(1,n)+\sum\limits_{(s,t)\in\sigma^-_{(i,j)(i',j')}}b_\upsilon(s,t)
+\sum\limits_{(s,t)\in\sigma^-_{(i,j)(i',j')}}
c_\upsilon^\ell(s,t)\\
&=2(k+2)+b(\sigma)+2r+c(\sigma)=b(\sigma)+c(\sigma)+4+2(k+r),
\end{align*}
which provides $(*)$.

Now, let us compute $\dim \mathcal T_{F_\upsilon}(\mathcal F_\sigma)$. In order to do that, we have to find the number of all $\sigma'$ such that $\upsilon<\sigma'\leq\sigma$, where $\sigma'$ is a predecessor of $\upsilon$. Such $\sigma'$ can be obtained by one of the following ways:
\begin{itemize}
\item[(i)]  $\sigma'=\upsilon_{(s,t)}^-(s,t')$ where $(s,t)\in \{(i,j'),(1,n)\}\sqcup R$ and $t'$ is any of fixed points of $\upsilon$ at $[j,i']$.  The number of such
$\sigma'$ is
$$(k+2)(r+2).$$
\item[(ii)] $\sigma'=\upsilon_{(s,t)}^-(s',t)$ where $(s,t)\in\{(i,j'),(1,n)\}\sqcup R$ and $s'$ is any of fixed points of $\upsilon$ at $[j,i']$. The number of such
$\sigma'$ is $$(k+2)(r+2).$$
\end{itemize}
These are all  predecessors of $\upsilon$ which are smaller than $\sigma$. Recall that $\dim \mathcal T_{F_\upsilon} (\mathcal F_\sigma)$ is equal to $\dim\mathcal{F}_\upsilon$ plus the number of predecessors which are smaller than $\sigma$. So we get
$\dim \mathcal T_{F_\upsilon} (\mathcal F_\sigma)=\dim \mathcal F_\upsilon+(k+2)(2r+4)$.
Hence, taking into account $(*)$ we get
\begin{align*}
\dim \mathcal T_{F_\upsilon}(\mathcal F_\sigma)&=\dim \mathcal F_\upsilon+(k+2)(2r+4)=\dim F_\sigma-(4+2(k+r))+(k+2)(2r+4)\\
&=\dim \mathcal F_\sigma+(k+1)(2r+2)+2.
\end{align*}
Therefore, $F_\upsilon$ is a singular point of $\mathcal F_\sigma$.
\end{proof}

Now let us show

\begin{proposition}\label{prop2} Let $(i,j),(i',j')\in \sigma$ be  admissible at $[1,n]$.
Let $$\upsilon=\sigma_{(i,j),(i',j')}^-(i,j')(1,n).$$
Then for any $\upsilon'$ such that $\sigma\geq\upsilon'>\upsilon$ one has   $\dim {\mathcal T}_{F_{\upsilon'}}(\mathcal F_\sigma)= \dim \mathcal F_\sigma$, so that $\upsilon\in Sing(\sigma).$
\end{proposition}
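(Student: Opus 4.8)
The plan is to show that every $\upsilon'$ strictly between $\upsilon$ and $\sigma$ in the order on $I_{n,k}$ is a smooth point of $\mathcal{F}_\sigma$, equivalently (by the theorem of Fresse quoted in \S\ref{secGG}) that $a_{\sigma,\upsilon'}=p_\sigma=\dim\mathcal{F}_\sigma-d_0$, i.e. that $\upsilon'$ has exactly as many edges in $G_\sigma$ as $\sigma$ itself does. By the dimension formula in Theorem \ref{thM007}, $p_\sigma=\binom{n-k}{2}-\binom{n-2k}{2}-b(\sigma)-c(\sigma)$; so what must be checked is that the number of predecessors of $\upsilon'$ lying below $\sigma$, plus the number of elements of $Cov(\upsilon')$ lying below $\sigma$ (which by \cite{Tim1995} equals the number of edges going ``down'' from $\upsilon'$ inside $G_\sigma$), together add up to $p_\sigma$. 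Since $\dim\mathcal{T}_{F_{\upsilon'}}(\mathcal F_\sigma)=a_{\sigma,\upsilon'}+d_0$ and always $a_{\sigma,\upsilon'}\ge p_\sigma$, it suffices to prove the reverse inequality $a_{\sigma,\upsilon'}\le p_\sigma$.

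The key step is a careful case analysis of what $\upsilon'$ can look like. Since $\upsilon<\upsilon'\le\sigma$, the link pattern $\upsilon'$ is obtained from $\upsilon$ by a sequence of elementary moves backward (undoing some of the ``nesting'' that produced $\upsilon$ from $\sigma$). First I would observe that the two big arcs of $\upsilon$, namely $(1,n)$ and $(i,j')$, must be ``resolved'' in passing to $\upsilon'$: because $\sigma$ is maximal it has no crossings and no fixed points under arcs, so any $\upsilon'\le\sigma$ with $\upsilon'\ne\upsilon$ cannot retain both the outermost bridging arc $(1,n)$ and the inner bridging arc $(i,j')$ in the configuration they have in $\upsilon$. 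I would enumerate the possible $\upsilon'$ by which backward elementary move(s) are applied to $(1,n)$ and to $(i,j')$ — an elementary move backward of the first type replaces a bridging arc and a fixed point by a shorter arc, and of the second type un-nests a concentric pair — and use the admissibility hypotheses (no crossings over $(i,j)$ or $(i',j')$; all arcs over both are concentric; the product $kr=0$ of the number of interior fixed points and the number of doubly-covering arcs) to show that in each such case $\upsilon'$ ``looks locally like'' $\sigma$ near the relevant interval, from which the count of predecessors-below-$\sigma$ can be computed exactly as in Proposition \ref{prop1}.

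Concretely, for each candidate $\upsilon'$ I would compute $b(\upsilon')+c(\upsilon')$ (getting $\operatorname{codim}_{\mathcal F_\sigma}\mathcal F_{\upsilon'}$) and then count the predecessors of $\upsilon'$ that are $\le\sigma$, exactly mirroring the bookkeeping of Proposition \ref{prop1}: predecessors come from adding an interior fixed point to one of the bridging arcs (first type) or from un-nesting one of the concentric arcs (second type), and the admissibility conditions guarantee there are no other ways to stay below $\sigma$. The arithmetic should collapse so that $\dim\mathcal F_{\upsilon'}+\#\{\text{predecessors}\le\sigma\}=\dim\mathcal F_\sigma$ in every case; combined with Proposition \ref{prop1}, which shows $F_\upsilon$ itself is singular, this yields that $\mathcal F_\upsilon$ is an \emph{irreducible component} of the singular locus, i.e. $\upsilon\in Sing(\sigma)$. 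The main obstacle I anticipate is organizing the case analysis cleanly: the condition $kr=0$ splits the argument into the ``$r=0$, $k$ arbitrary'' branch and the ``$k=0$, $r$ arbitrary'' branch, and in the latter one must control all the concentric arcs of $R$ simultaneously and show that partially un-nesting them never increases the edge count beyond $p_\sigma$; keeping the fixed-point/crossing contributions straight through these moves is where the real work lies.
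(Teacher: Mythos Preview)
Your framework is right --- count predecessors of $\upsilon'$ that lie in $G_\sigma$ and compare with the codimension --- but you are missing the single observation that collapses the whole case analysis you anticipate. The paper does \emph{not} check every $\upsilon'$ with $\upsilon<\upsilon'\le\sigma$. Instead it uses upper semicontinuity: for any such $\upsilon'$ there exists $\upsilon''$ with $\upsilon'\ge\upsilon''>\upsilon$ and $\dim\mathcal F_{\upsilon''}=\dim\mathcal F_\upsilon+1$, and $\dim\mathcal T_{F_{\upsilon'}}(\mathcal F_\sigma)\le\dim\mathcal T_{F_{\upsilon''}}(\mathcal F_\sigma)$. Hence it is enough to verify smoothness only for those $\upsilon'$ that are \emph{predecessors of $\upsilon$} lying in $G_\sigma$ with $\dim\mathcal F_{\upsilon'}=\dim\mathcal F_\upsilon+1$. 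By the enumeration already carried out in Proposition~\ref{prop1}, these are exactly
\[
\upsilon_1=\upsilon^-_{(s,t)}(s,i')\quad\text{or}\quad\upsilon_2=\upsilon^-_{(s,t)}(j,t),\qquad (s,t)\in R\sqcup\{(i,j'),(1,n)\},
\]
so there are only finitely many, all of the same shape, and a single computation (the paper does $\upsilon_1$) suffices. That computation gives $\dim\mathcal T_{F_{\upsilon_1}}(\mathcal F_\sigma)=\dim\mathcal F_{\upsilon_1}+(r+2)(k+2)-1$, which equals $\dim\mathcal F_\sigma$ precisely when $kr=0$; this is exactly where (and why) the last admissibility condition enters.

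Two further points where your outline drifts. First, the statement that in any $\upsilon'>\upsilon$ ``both $(1,n)$ and $(i,j')$ must be resolved'' is not correct: the relevant $\upsilon_1,\upsilon_2$ above modify only \emph{one} arc of $R\cup\{(i,j'),(1,n)\}$ and leave the others intact. Second, your decomposition of $a_{\sigma,\upsilon'}$ into ``predecessors below $\sigma$'' plus ``elements of $Cov(\upsilon')$'' is unnecessary; the identity used throughout is simply $\dim\mathcal T_{F_{\upsilon'}}(\mathcal F_\sigma)=\dim\mathcal F_{\upsilon'}+\#\{\text{predecessors of }\upsilon'\text{ that are }\le\sigma\}$, and with the reduction above the predecessor count for $\upsilon_1$ breaks into four explicit families whose cardinalities sum to $(k+1)(r+2)+(r+1)$. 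The ``partially un-nesting all concentric arcs of $R$'' difficulty you anticipate never arises.
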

\begin{proof}
Since for any $\upsilon'\ :\ \sigma\geq\upsilon'>\upsilon$ there exists $\upsilon''\ :\ \upsilon'\geq \upsilon''>\upsilon$ such that $$\dim\mathcal F_{\upsilon''}=\dim\mathcal F_\upsilon+1$$ and
$$\dim \mathcal T_{F_{\upsilon'}}(\mathcal F_\sigma)\leq \dim \mathcal T_{F_{\upsilon''}}(\mathcal F_\sigma)$$
it is enough to check that the proposition is true only for $\upsilon'$ predecessors of $\upsilon$ such that $\sigma>\upsilon'$ and $\dim\mathcal{F}_{\upsilon'}=\dim\mathcal{F}_\upsilon+1$.

Since $\{(i,j),(i',j')\}$ is admissible at $[1,n]$, there are no fixed points in $\upsilon$
outside of $[j,i']$ so that the minimal fixed point of $\upsilon$ is $j$ and the maximal fixed point of $\upsilon$ is $i'$. Thus, according to the proof of Proposition \ref{prop1} the predecessors of $\upsilon$ smaller than $\sigma$ can be only  of types (i) and (ii) namely:
$$\upsilon_1=\upsilon^-_{(s,t)}(s,i'),\quad{\rm or}\quad \upsilon_2=\upsilon^-_{(s,t)}(j,t)$$
where $(s,t)\in R\sqcup\{(i,j'),(1,n)\}.$

Exactly as in the proof of Proposition \ref{prop1} we have to compare ${\rm codim}_{\mathcal F_\sigma}\mathcal F_{\upsilon_m}$  and
$\dim \mathcal T_{F_{\upsilon_m}}(\mathcal F_\sigma)$, where $m=1,2$.
By $(*)$ in the proof of Proposition \ref{prop1}, we have
${\rm codim}_{\mathcal F_\sigma}\mathcal{F}_{\upsilon_m}=3+2(k+r)$.
The computations of $\dim \mathcal T_{F_{\upsilon_m}}(\mathcal F_\sigma)$ are exactly the same for both cases and are very similar to the computations in the proof of Proposition \ref{prop1}. For example, we compute $\dim\mathcal{T}_{\upsilon_1}(\mathcal F_\sigma)$.

We have to compute the number of all $\sigma'\ :\ \sigma\geq\sigma'$,
where $\sigma'$ is a predecessor of $\upsilon_1$. They are obtained as follows:
\begin{itemize}
\item{} $\sigma'=(\upsilon_1)_{(s,i')}^-(s,f)$ for $f$ a fixed point of  $[j,i'-1]$. The number of such $\sigma'$ is $k+1$.
\item{} $\sigma'=(\upsilon_1)_{(s',t')}^-(f,t')$ for $f$ a fixed point of  $[j,i'-1]$ and $$(s',t')\in \{(1,n),(i,j')\}\sqcup R\backslash\{(s,t)\}.$$
The number of such $\sigma'$ is $(k+1)(r+1)$.
\item{} $\sigma'=(\upsilon_1)_{(s',t'),(s,i')}^-(s',i')(s,t')$ for $(s',t')\in \{(1,n),(i,j')\}\sqcup R$ such that $s'>s.$ Let us denote the number of such $\sigma'$
by $a$.
\item{} $\sigma'=(\upsilon_1)_{(s',t')}^-(s',t)$ for $(s',t')\in \{(1,n),(i,j')\}\sqcup R$ such that $t'>t.$ Note that since all the arcs of $R$ are concentric we get that
$(s',t')$ satisfies $s'<s,$ so that the number of such $\sigma'$ is $r+1-a$.
\item{} Finally, note that for $\upsilon'=(\upsilon_1)_{(s,i')}^-(f,i')$ or $\upsilon'=(\upsilon_1)_{(s',t')}^-(s',f)$ for $f\in[j,i'-1]$ and $(s',t')\in \{(1,n),(i,j')\}\sqcup R$ such that $s'\neq s$ one has $\upsilon'\not<\sigma$.
\end{itemize}
All other predecessors also provide us with $\sigma'\not<\sigma.$

All together this shows that
$$\begin{array}{ll}\dim \mathcal T_{F_{\upsilon_1}}(\mathcal F_\sigma)&=\dim\mathcal F_{\upsilon_1}+(k+1)+(k+1)(r+1)+a+r+1-a=\dim\mathcal F_{\upsilon_1}+(k+1)(r+2)+r+1\\
&=\dim\mathcal F_{\upsilon_1}+(r+2)(k+2)-1=\left\{\begin{array}{ll}\dim\mathcal F_{\upsilon_1}+2(r+k)+3&{\rm if}\ rk=0;\\
\dim\mathcal F_{\upsilon_1}+2(r+k)+3+kr&{\rm otherwise};\\
\end{array}\right.
\end{array}$$
So we get that $\upsilon\in Sing(\sigma)$ if and only if $kr=0$ (here we used the last condition of admissibility).
\end{proof}


\subsection{Construction of $\upsilon\in Sing(\sigma)$ from $\upsilon((i,j),(i',j'),[s,t])$}\label{gencase}
For $\sigma\in I_{n,k}$ where $n>2k$ let $\widehat \sigma$ be either  $(1,0)$ or $(0,1)$ maximal completion of $\sigma$.\index{$\widehat \sigma$, Iseither  $(1,0)$ or $(0,1)$ maximal completion of $\sigma$}  We show that  $\upsilon\in Sing(\sigma)$ if and only if its corresponding $(1,0)$ or respectively  $(0,1)$ maximal completion $\widehat \upsilon\in Sing(\widehat \sigma).$
We also show that for $\sigma\in I_{n,k}$, $\upsilon\in Sing(\sigma)$ and $\sigma'\in I_{n',k'}$ one has $\sigma'\upsilon_{+n'}\in Sing(\sigma'\sigma_{+n'})$ and $\upsilon\sigma'_{+n}\in Sing(\sigma\sigma'_{+n}).$ In particular, one has $\upsilon_{+1}\in Sing(\sigma_{+1})$ (resp. $\upsilon$ as an element of $I_{n+1,k}$ is in $Sing(\sigma)$ as an element of  $I_{n+1,k}$).

Let $Gr_d(n)$ denote $d$-Grassmanian variety of $\mathbb{C}^n$\index{$Gr_d(n)$, $d$-Grassmanian variety of $\mathbb{C}^n$} that is the variety of all $d$-dimensional subspaces of $\mathbb{C}^n$.  As a straightforward generalization of \cite[\S 6.3]{FM2009} we get

\begin{proposition}\label{prop_compl} Let $\sigma\in I_{n,k}$ with $2k<n$, $x$ with $\lambda(x)=(2^k,1^{n-2k})$, and $\sigma(m,n+1)\in I_{n+1,k+1}$ be its maximal $(0,1)$ completion. Then a map
$$\phi:\mathcal F_{\widehat\sigma}\rightarrow Gr_n(n+1)\ :\ \phi((V_0,\ldots V_{n+1}))= V_n$$
is the subvariety of $\mathcal H_x:=\{ V\in Gr_n(n+1): V\supset Ker\, x\}\cong {\mathbb P}^k$. Moreover, the map $\phi:\mathcal F_{\widehat \sigma}\rightarrow \mathcal H_x$ is a locally trivial fiber bundle with typical fiber  isomorphic to $\mathcal F_\sigma$ so that $\mathcal F_{\widehat\sigma}$ is an iterated bundle of base $(\mathcal F_\sigma, {\mathbb P}^k)$.

In particular one has that $\mathcal F_\upsilon\subseteq \mathcal F_\sigma$ if and only if  $\mathcal F_{\widehat \upsilon}\subseteq\mathcal F_{\widehat \sigma}$ and ${\rm codim}_{\mathcal F_\sigma}\mathcal F_\upsilon={\rm codim}_{\mathcal F_{\widehat \sigma}}\mathcal F_{\widehat \upsilon}$ and $\dim \mathcal T_{F_\upsilon}(\mathcal F_\sigma)-\dim\mathcal F_\sigma=\dim\mathcal T_{F_{\widehat \upsilon}}(\mathcal F_{\widehat \sigma})-\dim\mathcal F_{\widehat \sigma}$, so that the components of a singular locus of $\mathcal F_{\widehat\sigma}$ are obtained by maximal completion from the components of the singular locus of $\mathcal F_\sigma$ and  singular locus of $\mathcal F_{\widehat\sigma}$ is an iterated bundle of base singular locus of $\mathcal F_\sigma$ and ${\mathbb P}^k$.
\end{proposition}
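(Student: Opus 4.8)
The plan is to follow \cite[\S 6.3]{FM2009} essentially verbatim; the only novelty is that $\sigma$ need not be maximal, which changes nothing. Write $\widehat x$ for the square-zero endomorphism of $\mathbb K^{n+1}$ attached to $\widehat\sigma=\sigma(m,n+1)$, so $\lambda(\widehat x)=(2^{k+1},1^{n-2k-1})$, $\dim Ker\,\widehat x=n-k$, and $\mathcal H_x=\{V\in Gr_n(n+1):V\supset Ker\,\widehat x\}$ is the Grassmannian of $k$-planes in $\mathbb K^{n+1}/Ker\,\widehat x$, hence $\cong\mathbb P^k$. First I would check that $\phi$ lands in $\mathcal H_x$: for a generic flag of $\mathcal Z_{\widehat\sigma}$ pick a $\widehat\sigma$-basis $(v_1,\dots,v_{n+1})$; since $n+1$ is a right end point of $\widehat\sigma$ (it carries the arc $(m,n+1)$), every $v_i\in Ker\,\widehat x$ has $i\le n$, so $Ker\,\widehat x=Span\{v_i:i\le n,\ \widehat\sigma(i)\ge i\}\subseteq V_n=\phi(F)$; as $\{V:V\supseteq Ker\,\widehat x\}$ is closed this gives $\phi(\mathcal F_{\widehat\sigma})\subseteq\mathcal H_x$.

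The crux is to show $\phi:\mathcal F_{\widehat\sigma}\to\mathcal H_x$ is a $Z_{\widehat x}$-equivariant locally trivial fibre bundle with fibre $\mathcal F_\sigma$. Now $Z_{\widehat x}$ stabilises $Ker\,\widehat x$, hence acts on $\mathcal H_x$, and $\phi$ is clearly equivariant. Using a Jordan basis $\{a_i,b_i=\widehat x a_i\}_{i=1}^{k+1}\cup\{c_j\}$, the maps $a_i\mapsto\sum_l g_{li}a_l$, $b_i\mapsto\sum_l g_{li}b_l$, $c_j\mapsto c_j$ lie in $Z_{\widehat x}$ for every $(g_{li})\in GL_{k+1}$, so the natural map $Z_{\widehat x}\to GL(\mathbb K^{n+1}/Ker\,\widehat x)$ is onto and $Z_{\widehat x}$ acts transitively on $\mathcal H_x$; thus $\phi$ is an equivariant surjection onto the homogeneous space $\mathcal H_x=Z_{\widehat x}/P$ ($P$ the stabiliser of a chosen $V_0$), hence a fibre bundle, Zariski-locally trivial because $\mathcal H_x\cong\mathbb P^k$ admits local sections of $Z_{\widehat x}\to\mathcal H_x$ over the standard affine cover, written down from the $GL_{k+1}$-part, and these give a trivialisation compatible with the orbit stratification. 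For the fibre: since $V_0\supseteq Ker\,\widehat x$ the restriction $\widehat x|_{V_0}$ has rank $k$, and under an isomorphism $V_0\cong\mathbb K^n$ carrying $\widehat x|_{V_0}$ to $x$, a $\widehat\sigma$-basis with $Span\{v_1,\dots,v_n\}=V_0$ restricts to a $\sigma$-basis of $V_0$ (the arc $(m,n+1)$ contributes nothing, as $\widehat x v_m=0$ matches $\sigma(m)=m$), while conversely every $\sigma$-basis of $V_0$ extends to a $\widehat\sigma$-basis by adjoining a $\widehat x$-preimage $v_{n+1}$ of $v_m$, which lies outside $V_0$ because $v_m\in Im\,\widehat x\setminus\widehat x(V_0)$. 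Hence $\phi^{-1}(V_0)\cap\mathcal Z_{\widehat\sigma}\cong\mathcal Z_\sigma$, and the equivariant trivialisation gives $\phi^{-1}(V_0)\cap\mathcal F_{\widehat\sigma}=\overline{\phi^{-1}(V_0)\cap\mathcal Z_{\widehat\sigma}}\cong\mathcal F_\sigma$, so $\mathcal F_{\widehat\sigma}$ is an iterated bundle of base $(\mathcal F_\sigma,\mathbb P^k)$.

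The ``in particular'' part is then formal. Running the same argument for $\widehat\upsilon$ shows $\phi:\mathcal F_{\widehat\upsilon}\to\mathcal H_x$ is also a locally trivial bundle with fibre $\mathcal F_\upsilon$, so intersecting with a fixed fibre $\phi^{-1}(V_0)$ yields $\mathcal F_\upsilon\subseteq\mathcal F_\sigma\Longleftrightarrow\mathcal F_{\widehat\upsilon}\subseteq\mathcal F_{\widehat\sigma}$, while $\dim\mathcal F_{\widehat\sigma}=\dim\mathcal F_\sigma+k$ and $\dim\mathcal F_{\widehat\upsilon}=\dim\mathcal F_\upsilon+k$ give ${\rm codim}_{\mathcal F_\sigma}\mathcal F_\upsilon={\rm codim}_{\mathcal F_{\widehat\sigma}}\mathcal F_{\widehat\upsilon}$. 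Choosing the representative $F_{\widehat\upsilon}$ over $V_0$, local triviality gives $\mathcal F_{\widehat\sigma}\cong\mathcal F_\sigma\times\mathbb A^k$ near $F_{\widehat\upsilon}\leftrightarrow(F_\upsilon,0)$, so $\dim\mathcal T_{F_{\widehat\upsilon}}\mathcal F_{\widehat\sigma}=\dim\mathcal T_{F_\upsilon}\mathcal F_\sigma+k$; subtracting the dimension identity yields the claimed equality of tangent-space excesses, whence $F_{\widehat\upsilon}$ is singular in $\mathcal F_{\widehat\sigma}$ iff $F_\upsilon$ is singular in $\mathcal F_\sigma$. Therefore $Sing\,\mathcal F_{\widehat\sigma}=\phi^{-1}(Sing\,\mathcal F_\sigma)$ is a locally trivial bundle over $\mathbb P^k$ with fibre $Sing\,\mathcal F_\sigma$, and the inclusion-order correspondence matches its irreducible components with the maximal completions of the components of $Sing\,\mathcal F_\sigma$. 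The main obstacle is the crux step: arguing that the bundle is Zariski-locally trivial and pinning the fibre down as the single component $\mathcal F_\sigma$ rather than some larger union of components; the remaining steps are bookkeeping.
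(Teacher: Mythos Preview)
Your proposal is correct and follows the same route as the paper: both explicitly invoke \cite[\S 6.3]{FM2009} (the paper also cites \cite[Theorem 2.1]{FM2010}), establish $\phi(\mathcal F_{\widehat\sigma})\subseteq\mathcal H_x\cong\mathbb P^k$ by looking at a $\widehat\sigma$-basis, and then use the $Z_{\widehat x}$-equivariance to obtain the locally trivial bundle structure with fibre $\mathcal F_\sigma$. The paper's own proof is in fact considerably terser than yours---it asserts the bundle structure and defers to the earlier references without spelling out the transitivity argument, the fibre identification via restricting/extending $\widehat\sigma$-bases, or the derivation of the ``in particular'' consequences---so your write-up supplies details the paper leaves implicit, but the underlying strategy is identical.
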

\begin{proof}
Let $\sigma\in I_{n,k}$ with $2k<n$ be a link pattern and $\widehat{\sigma}\in I_{n+1,k+1}$ its maximal $(0,1)$ completion. Let $\mathcal{H}_x$ be the variety of hyperplanes
$H\subset V=\mathbb C^{n+1}$ such that $\ker x\subset H$.\index{$\mathcal{H}_x$, Variety of hyperplanes $H\subset V=\mathbb C^{n+1}$ such that $\ker x\subset H$} This is a projective variety which is naturally isomorphic to the variety of hyperplanes of the space
$V/\ker x$, hence $\mathcal{H}_x\cong\mathbb{P}^k$.

Recall that $\mathcal F_{\widehat\sigma}=\overline{\mathcal Z}_{\widehat\sigma}$.
Every flag $F=(V_0,\ldots,V_{n+1})\subset \mathcal Z_{\widehat{\sigma}}$ satisfies $\ker x\subset V_n$. Hence, $V_n\in\mathcal{H}$ whenever
$F\in \mathcal Z_{\widehat{\sigma}}$. Thus,
 the fact that the map
\begin{align*}
\Phi:\mathcal F_{\widehat{\sigma}}&\rightarrow \mathcal{H}_x,\\
(V_0,\ldots,V_{n+1})&\mapsto V_n
\end{align*}
is indeed locally trivial fiber bundle of fiber isomorphic to $\mathcal F_\sigma$ follows immediately from the fact that it is locally closed subset of $\mathcal F_{\sigma}$ containing $\mathcal Z_\sigma.$ (or cf. the proof of \cite[Theorem 2.1]{FM2010}.)
\end{proof}

By Proposition \ref{prop_compl} if
$\pi_{s,t}(\sigma)$ is singular and $\sigma$ is the maximal
$(s-1,n-t)$-completion of $\pi_{s,t}(\sigma)$ then $\sigma$ is
singular. However, it is not true that if
$\mathcal{F}_{\pi_{s,t}(\sigma)}$ is singular then
$\mathcal{F}_\sigma$ must be singular, as it is shown by the small example,
not connected to admissible pairs. Consider
$\sigma=(1,7)(2,6)(4,5)\in I_{7,3}$ and its projection
$\pi_{1,6}(\sigma)=(2,6)(4,5)\in I_{6,2}$.
\begin{center}
\begin{picture}(230,60)(50,15)
\put(-40,35){$\sigma=$} \multiput(-10,40)(15,0){7} {\circle*{3}}
\put(-12,25){$1$} \put(3,25){$2$} \put(18,25){$3$}
\put(33,25){$4$} \put(48,25){$5$} \put(63,25){$6$}
\put(78,25){$7$} \qbezier(-10,40)(35,90)(80,40)
\qbezier(5,40)(35,75)(65,40) \qbezier(35,40)(42.5,55)(50,40)
\put(100,35){$\pi_{1,6}(\sigma)=$} \multiput(155,40)(15,0){6}
{\circle*{3}} \put(153,25){$1$} \put(168,25){$2$}
\put(183,25){$3$} \put(198,25){$4$} \put(218,25){$5$}
\put(228,25){$6$} \qbezier(170,40)(200,75)(230,40)
\qbezier(200,40)(207.5,55)(215,40)
\put(130,0){\put(115,35){$\omega'=$} \multiput(155,40)(15,0){6}
{\circle*{3}} \put(153,25){$1$} \put(168,25){$2$}
\put(183,25){$3$} \put(198,25){$4$} \put(218,25){$5$}
\put(228,25){$6$} \qbezier(155,40)(192.5,75)(230,40)
\qbezier(170,40)(192.5,55)(215,40)}
\end{picture}
\end{center}

One can easily check that  $\mathcal F_{(2,6)(4,5)}$ is singular,
but $\mathcal F_{\sigma}$ is smooth. Indeed,
$\mathcal{Z}_{\omega'}$ is of codimension $3$ in
$\mathcal{F}_{\pi_{1,6}(\sigma)}$ and has $4$ predecessors
in $G_{\pi_{1,6}(\sigma)}$. On the other hand, by \cite{Fr2009}, if $\mathcal{F}_\sigma$ is
singular then $\mathcal Z_{\omega_0}$ must be singular in
it (where $\omega_0=(1,5)(2,6)(3,7)$ is from Theorem \ref{thM007}).  However, the straightforward computation shows that there are 4 predecessors of $\omega_0$ in $G_\sigma$ and that ${\rm codim}_{\mathcal F_\sigma}\mathcal Z_{\omega_0}=4,$ thus $\mathcal F_\sigma$ is smooth.

For $\sigma\in I_{n,k}^{\max}$ one can use projections and
concatenations in order to find elements of $Sing(\sigma):$ their
number is greater or equal to the number of admissible
pairs (at different intervals) of $\sigma$ obtained by repeating
the procedures of adding fixed points and of maximal completions
may be a few times. Note that concatenation can be represented as
series of additions of fixed points and maximal completions.

Let $\mathcal B_n=\{e_i\}_{i=1}^n$ be the standard basis of
$\mathbb C^n$,\index{$\mathcal B_n=\{e_i\}_{i=1}^n$, Standard basis of $\mathbb C^n$,} $\mathcal B'_{n'}=\{e'_j\}_{j=1}^{n'}$ be the
standard basis of $\mathbb C^{n'}$ and $\mathcal B_n\sqcup
\mathcal B'_{n'}$ the standard basis of $\mathbb C^{n+n'}$.\index{$\mathcal B_n\sqcup
\mathcal B'_{n'}$, Standard basis of $\mathbb C^{n+n'}$} For
$x\in End (\mathbb C^n), y\in End(\mathbb C^{n'})$ of nilpotency
order 2 of ranks $k,k'$ respectively let $x*y\in End(\mathbb
C^{n+n'})$ be defined by $x*y(e_i)=x(e_i)$\index{$x*y(e_i)$} for any $1\leq i\leq n$
and $x*y(e'_j)=y(e'_j)$ for any $1\leq j\leq n'$. Obviously, $x*y$
is of nilpotency order 2,  ${\rm Rank}\, x*y= k+k'$ and for any
$\sigma\in I_{n,k},\sigma'\in I_{n',k'}$  one has $\mathcal
Z_{\sigma\sigma'_{+n}}$ is $Z_{x*y}$ orbit. We get

\begin{proposition}\label{prop3.17}
Let $\sigma,\upsilon\in I_{n,k}$ be  link patterns such that $\sigma>\upsilon$ and let $\sigma'\in I_{n',k'}.$  Then  $\upsilon\sigma'_{+n}\in Sing(\sigma\sigma'_{+n})$ iff
$\upsilon\in Sing(\sigma).$

In particular, if $\widetilde \upsilon,\widetilde \sigma$ are  $\upsilon,\sigma$ considered as elements of $I_{n+1,k}$ one has $\widetilde \upsilon\in Sing(\widetilde \sigma)$ iff
$\upsilon\in Sing(\sigma).$
\end{proposition}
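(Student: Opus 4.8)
The plan is to reduce everything to the fiber-bundle picture of Proposition \ref{prop_compl} together with the combinatorial invariants that detect singularity. Recall from the last theorem of \S\ref{secGG} that $\mathcal Z_\upsilon$ lies in the singular locus of $\mathcal F_\sigma$ precisely when $\dim\mathcal T_{F_\upsilon}(\mathcal F_\sigma)-\dim\mathcal F_\sigma>0$, i.e.\ when $a_{\sigma,\upsilon}>p_\sigma$, and moreover $\mathcal F_\upsilon$ is a \emph{component} of the singular locus (so that $\upsilon\in Sing(\sigma)$) exactly when in addition every $\upsilon'$ with $\sigma\geq\upsilon'>\upsilon$ satisfies $\dim\mathcal T_{F_{\upsilon'}}(\mathcal F_\sigma)=\dim\mathcal F_\sigma$. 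So the whole statement is controlled by two quantities: the defect $\delta_\sigma(\upsilon):=\dim\mathcal T_{F_\upsilon}(\mathcal F_\sigma)-\dim\mathcal F_\sigma$, and the relation $\upsilon<\sigma$ in the order on $I_{n,k}$. The claim will follow once I show both are unchanged under concatenation with a fixed $\sigma'\in I_{n',k'}$ on the right.

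First I would observe that $x*y$ with $x,y$ nilpotent of square zero is again of square zero, and that $\mathcal F_{\sigma\sigma'_{+n}}$ decomposes compatibly: a flag in $\mathcal Z_{\sigma\sigma'_{+n}}$ restricts to the two summands $\mathbb C^n$ and $\mathbb C^{n'}$, giving a product-type structure. More precisely, since $\sigma\sigma'_{+n}$ is obtained from $\sigma$ (viewed in $I_{n+n',k}$ by adding $n'$ fixed points on the right) by a sequence of maximal completions on the right — this is exactly the remark made just before the proposition that ``concatenation can be represented as series of additions of fixed points and maximal completions'' — I can invoke Proposition \ref{prop_compl} iteratively. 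Each maximal $(0,1)$-completion is a locally trivial fiber bundle with fiber the previous Springer fiber variety, and Proposition \ref{prop_compl} explicitly records that such a bundle preserves the inclusion relation $\mathcal F_\upsilon\subseteq\mathcal F_\sigma$ (equivalently $\upsilon\leq\sigma$), preserves codimension, and preserves the tangent-space defect $\dim\mathcal T_{F_\upsilon}(\mathcal F_\sigma)-\dim\mathcal F_\sigma$. Adding a fixed point on the right (the ``$\widetilde\upsilon\in Sing(\widetilde\sigma)$ iff $\upsilon\in Sing(\sigma)$'' special case) is handled the same way, or is even more elementary since it does not change the geometry at all — it only relabels; this is the base of the induction.

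The second step is to combine these invariances. Since $\delta_\sigma(\upsilon)$ is preserved at every stage, $\delta_{\sigma\sigma'_{+n}}(\upsilon\sigma'_{+n})=\delta_\sigma(\upsilon)$, so $F_{\upsilon\sigma'_{+n}}$ is a singular point of $\mathcal F_{\sigma\sigma'_{+n}}$ iff $F_\upsilon$ is a singular point of $\mathcal F_\sigma$. For the component condition I need the bijection between $\{\upsilon':\sigma\geq\upsilon'>\upsilon\}$ and $\{\upsilon'':\sigma\sigma'_{+n}\geq\upsilon''>\upsilon\sigma'_{+n}\}$ to be realized by concatenation with $\sigma'$; this follows because the fiber-bundle structure of Proposition \ref{prop_compl} identifies the partially ordered set of orbit closures inside $\mathcal F_{\widehat\sigma}$ with that inside $\mathcal F_\sigma$ (the base ${\mathbb P}^k$ contributes nothing to the order stratification since it is irreducible and smooth). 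Hence $\upsilon'\mapsto\upsilon'\sigma'_{+n}$ is an order isomorphism between the relevant intervals, and it carries defect to defect, so $\upsilon$ is maximal among singular points below $\sigma$ iff $\upsilon\sigma'_{+n}$ is maximal among singular points below $\sigma\sigma'_{+n}$, i.e.\ $\upsilon\in Sing(\sigma)\iff\upsilon\sigma'_{+n}\in Sing(\sigma\sigma'_{+n})$. The ``in particular'' statement is then the case $n'=1$, $\sigma'=$ the single fixed point, $k'=0$.

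The main obstacle I anticipate is making the identification of the order-stratified posets precise across a general concatenation rather than a single maximal completion: Proposition \ref{prop_compl} is stated for one $(0,1)$-completion, and although ``the order of completion is not important'' was noted after the definition, I need that \emph{every} orbit closure $\mathcal F_{\upsilon''}$ with $\upsilon''\leq\sigma\sigma'_{+n}$ actually has the form $\mathcal F_{\upsilon'\sigma'_{+n}}$ — i.e.\ that no orbit below $\sigma\sigma'_{+n}$ ``mixes'' the two blocks. This requires checking that if $\upsilon''\leq\sigma\sigma'_{+n}$ then all arcs of $\upsilon''$ stay within $[1,n]$ or within $[n+1,n+n']$, which follows from the matrix criterion $R(\upsilon'')\leq R(\sigma\sigma'_{+n})$ since $R(\sigma\sigma'_{+n})$ is block-diagonal off the two intervals and an arc crossing from one block to the other would force a nonzero entry where $R(\sigma\sigma'_{+n})$ vanishes. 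Once that block-decomposition of the full interval is in hand, the induction on the number of maximal completions goes through routinely.
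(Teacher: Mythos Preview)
Your approach is genuinely different from the paper's and, as written, has two real gaps.

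\textbf{(1) The factorization does not exist for general $\sigma'$.} You claim that $\sigma\sigma'_{+n}$ is obtained from $\sigma$ (after adding $n'$ fixed points) by a sequence of maximal $(0,1)$-completions. But a maximal $(0,1)$-completion always attaches the new rightmost point to the \emph{current} rightmost fixed point, so iterated completions on the right can only produce a \emph{non-crossing} arc configuration there. If $\sigma'$ has a crossing (say $\sigma'=(1,3)(2,4)$), no sequence of fixed-point additions and maximal completions will manufacture $\sigma\sigma'_{+n}$. Hence your reduction to Proposition~\ref{prop_compl} proves at best the special case $\sigma'\in I_{n',k'}^{\max}$, not the proposition as stated. (The paper's remark that ``concatenation can be represented as series of additions of fixed points and of maximal completions'' is made in the context of maximal link patterns; it is not a general fact.) Even in the maximal case you would still owe the check that applying the \emph{same} sequence of steps to $\upsilon$ yields $\upsilon\sigma'_{+n}$ rather than some other completion --- this is true once enough right-hand fixed points are added first, but it needs to be said.

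\textbf{(2) The ``block-diagonal'' justification is wrong.} The matrix $R(\sigma\sigma'_{+n})$ has no vanishing cross-block entries: for $a\le n<b$ one has $R_{[a,b]}(\sigma\sigma'_{+n})=R_{[a,n]}(\sigma)+R_{[n+1,b]}(\sigma')$, which is typically positive. So a cross-block arc in $\upsilon''$ does \emph{not} force a nonzero entry where $R(\sigma\sigma'_{+n})$ vanishes, and indeed there exist $\upsilon''\le\sigma\sigma'_{+n}$ with cross-block arcs (e.g.\ $(1,4)(2,3)\le(1,2)(3,4)$). What makes the interval bijection work is the \emph{lower} bound: from $\upsilon''>\upsilon\sigma'_{+n}$ and $\upsilon''\le\sigma\sigma'_{+n}$ one gets $k'=R_{[n+1,n+n']}(\upsilon\sigma'_{+n})\le R_{[n+1,n+n']}(\upsilon'')\le R_{[n+1,n+n']}(\sigma\sigma'_{+n})=k'$, forcing $\pi_{n+1,n+n'}(\upsilon'')=\sigma'_{+n}$; the arc count then gives $\upsilon''=\omega\sigma'_{+n}$ with $\sigma\ge\omega>\upsilon$.

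The paper's proof bypasses Proposition~\ref{prop_compl} entirely: it checks directly that $b(\upsilon\sigma'_{+n})+c(\upsilon\sigma'_{+n})-b(\sigma\sigma'_{+n})-c(\sigma\sigma'_{+n})=b(\upsilon)+c(\upsilon)-b(\sigma)-c(\sigma)$, establishes the interval bijection by the sandwich argument just described, and observes that predecessors of $\upsilon\sigma'_{+n}$ below $\sigma\sigma'_{+n}$ correspond exactly to predecessors of $\upsilon$ below $\sigma$. This is shorter, works for arbitrary $\sigma'$, and avoids the geometric machinery; your route would need the restriction to maximal $\sigma'$ and a correct interval argument to go through.
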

\begin{proof}
Indeed, note that ${\rm codim}_{\mathcal F_{\sigma\sigma'_{+n}}}\mathcal F_{\upsilon\sigma'_{+n}}=c(\upsilon)+b(\upsilon)-(c(\sigma)+b(\sigma))={\rm codim}_{\mathcal F_\sigma}\mathcal F_\upsilon$.

One also has that $\sigma\sigma'_{+n}\geq\upsilon'>\upsilon\sigma'_{+n}$  iff $\upsilon'=\omega\sigma'_{+n}$ where $\sigma\geq\omega>\upsilon$.  Indeed, if
 $\upsilon'=\omega\sigma'_{+n}$ where $\sigma\geq\omega>\upsilon$ then obviously, $\sigma\sigma'_{+n}\geq\upsilon'>\upsilon\sigma'_{+n}$. On the other hand, if   $\sigma\sigma'_{+n}\geq\upsilon'>\upsilon\sigma'_{+n}$, then in particular,
$\sigma'_{+n}=\pi_{n+1,n+n'}(\sigma\sigma'_{+n})
\leq \pi_{n+1,n+n'}(\upsilon')\leq \pi_{n+1,n+n'}(\upsilon\sigma'_{+n})=\sigma'_{+n}.$
Thus, $\pi_{n+1,n+n'}(\upsilon')=\sigma'_{+n}$.
One also has
$\pi_{1,n}(\sigma\sigma'_{+n})=\sigma\geq\pi_{1,n}(\upsilon')\geq \pi_{1,n}(\upsilon\sigma'_{+n})=\upsilon.$ Taking into account that the number of arcs in all link patterns are
$k+k'$ we get  that $\upsilon'=\omega\sigma'_{+n}$ where $\omega=\pi_{1,n}(\upsilon')$ satisfies $\sigma\geq \omega>\upsilon.$

In particular $\upsilon'\leq\sigma\sigma'_{+n}$ is a predecessor of $\upsilon\sigma'_{+n}$ iff $\omega\leq \sigma$ is a predecessor of $\upsilon$.
Thus,  $$\dim\mathcal T_{F_{\upsilon\sigma'_{+n}}}(\mathcal F_{\sigma\sigma'_{+n}})-\dim\mathcal F_{\sigma\sigma'_{+n}}=\dim\mathcal T_{F_\upsilon}(\mathcal F_\sigma)-\dim\mathcal F_\sigma.$$
\end{proof}

\subsection{The components of the singular locus of a $\mathcal F_\sigma$ for $\sigma\in I_{n,k}^{\max}$}\label{sec2.4}
The algorithm for elements of $Sing(\sigma)$ for $\sigma\in
I^{\max}_{n,k}$ with $\rho(\sigma)\geq 4$ is obtained as follows:
\begin{itemize}
\item[a)] completion -- determining $\widehat\sigma$:
\begin{itemize}
\item If $1,n\in \sigma^0$ put $\widehat\sigma=\sigma$ and
$s=1,t=n;$
\item If $|\{1,n\}\cap\sigma^0|=1$ then if
$1\in\sigma^0$ put $s=1$, $t=\max \tau^*(\sigma)$ and
$\widehat\sigma=\pi_{1,t}(\sigma)$, otherwise put $t=n$,
$s=\min\tau^*(\sigma)+1$ and
$\widehat\sigma=(\pi_{s,n}(\sigma))_{+(1-s)}$; \item  If
$\{1,n\}\cap\sigma^0=\emptyset$ and $(1,n)\in\sigma$ put $s=1$,
$t=\max \tau^*(\sigma)$ and $\widehat\sigma=\pi_{1,t}(\sigma)$;
otherwise put $s=\min\tau^*(\sigma)+1$, $t=\max\tau^*(\sigma)$ and
$\widehat\sigma=(\pi_{s,t}(\sigma))_{+(1-s)}$.
\end{itemize}
In step (b) we  find $Sing(\widehat\sigma)$ and then get
$Sing(\sigma)$ from it by maximal $(s-1,n-t)$-completion.

\item[b)] For any $\{i,j\}\in\tau^*(\widehat\sigma)$ let
$(a,b)\in\widehat\sigma$ be maximal arc over $(i,i+1)$ such that
$b<j$ and $\tau^*(\widehat\sigma)\cap [a,i-1]=\emptyset$. Let
$(c,d)$ be maximal arc over $(j,j+1)$ such that $i<c$ and
$\tau^*(\widehat\sigma)\cap[j+1,d]=\emptyset$. Note that these
arcs are well defined and $b<c$ since $\widehat\sigma$ is maximal.
Now for $(a,b),(c,d)$ find all possible intervals, where they are an admissible pair. To do this, put
$m_1=\max\{f\in{\widehat\sigma}^0\ :\ f<a\}$ and
$m_2=\min\{f\in{\widehat\sigma}^0\ :\ f>d\}$ and put
$\upsilon_1=\pi_{1,m_1-1}(\widehat\sigma)$,
$\upsilon_2=\pi_{m_2+1,n}(\widehat\sigma)$and
$\upsilon=\pi_{m_1,m_2}(\widehat\sigma)$.

\begin{itemize}
\item If
$\tau^*(\widehat\sigma)\cap([m_1,a]\cup[d,m_2])=\emptyset$ then
$[m_1,m_2]$ is the only possible interval for $(a,b),(c,d)$ to be
admissible at and
$\omega=\upsilon_1(\upsilon_{(a,b)(c,d)}^-(a,d)(m_1,m_2))\upsilon_2$.
\item Otherwise,\\
 If there is no $(s,t)\in\upsilon$ such that $s<a<b<t<j$ then for $l\in\tau^*(\widehat\sigma)\cap([m_1,a])$ let $(r_l,q_l)$ be maximal arc of $\upsilon$ over $(l,l+1)$ such that $q_l<a$ and let $I=\{m_1\}\cup\{q_l\}_{l\in\tau^*(\widehat\sigma)\cap([m_1,a])}$.\\
If there exists $(s,t)\in\upsilon$ such that $s<a<b<t<j$ then let
$(s,t)$ be minimal such arc. Note that
$\tau^*(\widehat\sigma)\cap[s,a] \ne\emptyset$ and for
$l\in\tau^*(\widehat\sigma)\cap[s,a]$ let $(r_l,q_l)$ be maximal
arc of $\upsilon$ over $(l,l+1)$ such that $q_l<a$ and let
$I=\{q_l\}_{l\in\tau^*(\widehat\sigma)\cap[s,a]}$.

Exactly in the same way,\\
If there is no $(s,t)\in\upsilon$ such that $i<s<c<d<t$ then for $l\in\tau^*(\widehat\sigma)\cap([d,m_2])$ let $(r_l,q_l)$ be maximal arc of $\upsilon$ over $(l,l+1)$ such that $d<r_l$ and let $J=\{m_2\}\cup\{r_l\}_{l\in\tau^*(\widehat\sigma)\cap([d,m_2])}$.\\
If there exists $(s,t)\in\upsilon$ such that $i<s<c<d<t$ then let
$(s,t)$ be minimal such arc. Note that
$\tau^*(\widehat\sigma)\cap[d,t] \ne\emptyset$ and for
$l\in\tau^*(\widehat\sigma)\cap[d,t]$ let $(r_l,q_l)$ be maximal
arc of $\upsilon$ over $(l,l+1)$ such that $r_l>d$ and let
$J=\{r_l\}_{l\in\tau^*(\widehat\sigma)\cap[d,t]}$.

For any interval $[p,q]\ : r\in I,\ q\in J$ the pair $(a,b),(c,d)$ is admissible at $[r,q]$ and
$\omega([r,q])=\upsilon_1\widehat\omega\upsilon_2$ where \\
if $r>m_1$ and $q<m_2$ then $\widehat\omega$ is  $(r-2-m_1,m_2-q-1)$-completion of $$\pi_{r,q}(\upsilon)_{(a,b),(c,d)}^-(a,d)(r,q).$$
This completion belongs to $[m_1+1,m_2-1]$ so that $m_1,m_2$ are fixed points;\\
if $r=m_1$ and $q<m_2$ then $\widehat\omega$ is $(0,m_2-q-1)$-completion of\\ $\pi_{m_1,q}(\upsilon)_{(a,b),(c,d)}^-(a,d)(m_1,q)$ with one added fixed point on the right;\\
if $r>m_1$ and $q=m_2$ then $\widehat\omega$ is $(r-2-m_1,0)$-completion of\\ $\pi_{r,m_2}(\upsilon)_{(a,b),(c,d)}^-(a,d)(r,m_2)$ with one added fixed point on the left;\\
Finally, if $(r,q)=(m_1,m_2)$ then $\widehat\omega=\upsilon_{(a,b)(c,d)}^-(a,d)(m_1,m_2).$
\end{itemize}
\end{itemize}

\begin{example}
Consider the following example, demonstrating different cases of our algorithm. Let $\sigma=(2,9)(3,6)(4,5)(7,8)(10,11)\in I_{12,5}$
\begin{center}
\begin{picture}(200,60)(20,-10)
\setlength{\unitlength}{.92pt}
\put(-40,-5){$\sigma=$}\multiput(0,0)(20,0){12} {\circle*{3}}
\put(-2,-15){$1$}\put(18,-15){$2$}\put(38,-15){$3$}
\put(58,-15){$4$}\put(78,-15){$5$}\put(98,-15){$6$}
\put(118,-15){$7$}\put(138,-15){$8$}\put(158,-15){$9$}
\put(176,-15){$10$}\put(196,-15){$11$}\put(216,-15){$12$}
\qbezier(20,0)(90,80)(160,0)\qbezier(40,0)(70,40)(100,0)
\qbezier(60,0)(70,20)(80,0)\qbezier(120,0)(130,20)(140,0)
\qbezier(180,0)(190,20)(200,0)
\end{picture}
\end{center}
$\tau^*(\sigma)=\{4,7,10\}$. For $\{4,7\}$ the admissible pair is
$(3,6),(7,8)$ and the intervals are $[1,10]$ and $[1,12]$ with
respective $\omega_1,\omega_2\in Sing(\sigma):$
\begin{center}
\begin{picture}(200,60)(75,-10)
\setlength{\unitlength}{.92pt}
\put(-30,-5){$\omega_1=$}
\multiput(0,0)(15,0){12} {\circle*{3}}
\put(-2,-15){$1$}\put(13,-15){$2$}\put(28,-15){$3$}\put(43,-15){$4$}
\put(58,-15){$5$}\put(73,-15){$6$}\put(88,-15){$7$}\put(103,-15){$8$}
\put(118,-15){$9$}\put(131,-15){$10$}\put(146,-15){$11$}\put(161,-15){$12$}
\qbezier(0,0)(67.5,80)(135,0)\qbezier(15,0)(67.5,65)(120,0)\qbezier(30,0)(67.5,50)(105,0)
\qbezier(45,0)(52.5,20)(60,0)\qbezier(90,0)(120,40)(150,0)
\put(190,-5){$\omega_2=$}
\multiput(220,0)(15,0){12} {\circle*{3}}
\put(218,-15){$1$}\put(233,-15){$2$}\put(248,-15){$3$}\put(263,-15){$4$}
\put(278,-15){$5$}\put(293,-15){$6$}\put(308,-15){$7$}\put(323,-15){$8$}
\put(338,-15){$9$}\put(351,-15){$10$}\put(366,-15){$11$}\put(381,-15){$12$}
\qbezier(220,0)(302.5,90)(385,0)\qbezier(235,0)(287.5,65)(340,0)\qbezier(250,0)(287.5,50)(325,0)
\qbezier(265,0)(272.5,20)(280,0)\qbezier(355,0)(362.5,20)(370,0)
\end{picture}
\end{center}
For $\{4,10\}$ the admissible pair is $(2,9),(10,11)$ and the only interval is $[1,12]$. Respectively,
\begin{center}
\begin{picture}(200,70)(20,-10)
\setlength{\unitlength}{.92pt}
\put(-40,-5){$\omega_3=$}
\multiput(0,0)(20,0){12} {\circle*{3}}
\put(-2,-15){$1$}\put(18,-15){$2$}\put(38,-15){$3$}
\put(58,-15){$4$}\put(78,-15){$5$}\put(98,-15){$6$}\put(118,-15){$7$}\put(138,-15){$8$}
\put(158,-15){$9$}\put(176,-15){$10$}\put(196,-15){$11$}\put(216,-15){$12$}
\qbezier(20,0)(110,80)(200,0)\qbezier(40,0)(70,40)(100,0)\qbezier(60,0)(70,20)(80,0)
\qbezier(120,0)(130,20)(140,0)\qbezier(0,0)(110,100)(220,0)
\end{picture}
\end{center}
Finally, for $\{7,10\}$ the admissible pair is $(7,8),(10,11)$ and the only interval $[6,12]$ so that
\begin{center}
\begin{picture}(200,70)(20,-10)
\setlength{\unitlength}{.92pt}
\put(-40,-5){$\omega_4=$}\multiput(0,0)(20,0){12} {\circle*{3}}
\put(-2,-15){$1$}\put(18,-15){$2$}\put(38,-15){$3$}\put(58,-15){$4$}
\put(78,-15){$5$}\put(98,-15){$6$}\put(118,-15){$7$}\put(138,-15){$8$}
\put(158,-15){$9$}\put(176,-15){$10$}\put(196,-15){$11$}\put(216,-15){$12$}
\qbezier(20,0)(90,90)(160,0)\qbezier(40,0)(90,60)(140,0)
\qbezier(60,0)(70,20)(80,0) \qbezier(120,0)(160,50)(200,0)\qbezier(100,0)(160,70)(220,0)
\end{picture}
\end{center}
So, according to the algorithm there are 4 elements in $Sing(\sigma).$
\end{example}

We are going to show in this subsection that for $\sigma\in I_{n,k}$ with $\rho(\sigma)\geq 4$,  $Sing(\sigma)$ is obtained by this algorithm.

\begin{theorem}\label{components}
Let $\sigma\in I_{n,k}^{\max}$ be such that $\rho(\sigma)\geq 4$ and let $\omega\in Sing(\sigma)$ then $\omega$ is obtained by our algorithm.
\end{theorem}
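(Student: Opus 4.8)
Recall that, by \cite{F2012} (the description of the singular locus recalled in Subsection~\ref{secGG}), the condition $\omega\in Sing(\sigma)$ means exactly that $\omega<\sigma$, that $F_\omega$ is a singular point of $\mathcal F_\sigma$ --- equivalently $\dim\mathcal T_{F_\omega}(\mathcal F_\sigma)=a_{\sigma,\omega}+d_0>\dim\mathcal F_\sigma$, i.e.\ $a_{\sigma,\omega}>p_\sigma$ --- and that $\omega$ is maximal in the order $\le$ with these two properties. The plan is to reverse the constructions of Sections~\ref{basic}--\ref{sec2.4}: first reduce to the case $1,n\in\sigma^0$, then establish a combinatorial normal form for such a maximal singular $\omega$, and finally read off from that normal form an admissible pair of $\sigma$ producing $\omega$, matching it with step~(b) of the algorithm.

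For the reduction, suppose $\{1,n\}\not\subseteq\sigma^0$ and let $\widehat\sigma$, $s$, $t$ be as in step~(a). Using that $\sigma\in I_{n,k}^{\max}$, every arc of $\sigma$ meeting $[1,s-1]\cup[t+1,n]$ lies in a ``forced'' maximal-completion block, so that (in the generic sub-case) $\sigma$ is the $(s-1,n-t)$-maximal completion of a translate of $\widehat\sigma=\pi_{s,t}(\sigma)$, while in the case $(1,n)\in\sigma$ one uses in addition the fibration of $\mathcal F_\sigma$ over a projective space analogous to Proposition~\ref{prop_compl}. By Propositions~\ref{prop_compl} and~\ref{prop3.17} these operations induce a bijection between the components of the singular locus of $\mathcal F_{\widehat\sigma}$ and those of $\mathcal F_\sigma$ that preserves the defect $\dim\mathcal T_{F_\bullet}(\mathcal F_\bullet)-\dim\mathcal F_\bullet$ and the codimension; the same operations carry admissible pairs of $\widehat\sigma$ to admissible pairs of $\sigma$ and the corresponding outputs $\upsilon(x)$ to one another, and $\rho(\widehat\sigma)=\rho(\sigma)\ge 4$ with $1,\,t-s+1\in{\widehat\sigma}^0$. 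Hence it suffices to treat $\widehat\sigma$, so from now on I assume $1,n\in\sigma^0$, i.e.\ $|\tau^*(\sigma)|=\rho(\sigma)-2\ge 2$.

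Now fix such a maximal singular $\omega$. The core step is to show that $\omega$ has the following shape. There is a sub-interval $[m_1,m_2]$ with $m_1,m_2\in\sigma^0$ (these will be the flanking fixed points of step~(b)) on which $\omega$ restricts to a link pattern of the form prescribed there --- an outermost arc $(r,q)$ with $m_1\le r<q\le m_2$, a single arc $(a,d)$ nested immediately inside it, the fixed points of $\sigma$ trapped under $(a,d)$, a (possibly empty) family of pairwise concentric arcs of $\sigma$ separating them, and, if $r>m_1$ or $q<m_2$, the completion arcs filling the ends of the window exactly as prescribed by the $(l,r)$-completion --- while outside $[m_1,m_2]$ the link pattern $\omega$ coincides with $\sigma$. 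Granting this, one recovers two arcs $(i,j),(i',j')\in\sigma$ with $(a,d)=(i,j')$ and $j$, $i'$ determined as in Proposition~\ref{prop1}; the six conditions of Subsection~\ref{3.1} then hold automatically, since the first five are literally features of the normal form (no arc of $\omega$ crosses, covering arcs are concentric, extra fixed points are confined to $[j,i']$, the window is bounded by fixed points of $\sigma$) and the sixth, $kr=0$, is exactly the condition isolated in the displayed computation at the end of the proof of Proposition~\ref{prop2} that distinguishes a \emph{maximal} singular orbit from a merely singular one. Comparing $[r,q]$ and the window $[m_1,m_2]$ with the prescriptions of step~(b) then identifies $\omega$ with the output $\upsilon(x)$ of the algorithm for the admissible pair $x=\bigl((i,j),(i',j'),[r,q]\bigr)$.

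I expect the heart of the matter, and the main obstacle, to be proving this normal form --- that is, excluding more complicated shapes for a maximal singular $\omega$ (several ``new'' nested arcs, non-concentric covering arcs, a discrepancy with $\sigma$ spread over several windows, or $kr\neq 0$). The approach is contrapositive: if $\omega<\sigma$ is singular but not of the above shape, I would produce $\omega'$ with $\omega<\omega'\le\sigma$ and $F_{\omega'}$ still singular, contradicting the maximality of $\omega$. This rests on Fresse's formula $\dim\mathcal T_{F_\upsilon}(\mathcal F_\sigma)=a_{\sigma,\upsilon}+d_0$ together with a careful count --- a window-by-window extension of the predecessor counts carried out in the proofs of Propositions~\ref{prop1} and~\ref{prop2} --- of how the number of predecessors of a link pattern lying $\le\sigma$ changes under a single backward elementary move; it is precisely here that the hypotheses $\sigma\in I_{n,k}^{\max}$ (no crossings, no fixed point under an arc) and $\rho(\sigma)\ge 4$ are used. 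Once the normal form is available, the verification of admissibility and the matching with step~(b) are routine bookkeeping, completing the proof of Theorem~\ref{components}.
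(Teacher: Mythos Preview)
Your overall strategy---reduce to $1,n\in\sigma^0$, then argue contrapositively that any $\omega\in Sing(\sigma)$ not of the expected shape admits a strictly larger singular $\omega'$---matches the paper's approach. But the proposal is a sketch rather than a proof, and it misses the specific technical ingredients that make the contrapositive work.

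The paper's proof hinges on three tools you do not identify. First, Proposition~\ref{prop3.20}: for $\sigma\in I_{n,k}^{\max}$ and any $\omega\in G_\sigma\setminus\{\sigma\}$, \emph{every} pair of predecessors of $\omega$ has at least one member in $G_\sigma$. This is a long case-by-case argument (reducing via completion to $k$-complete patterns, then inducting on the number of crossings), and it yields the clean characterization of Corollary~\ref{main}: $\omega$ is singular in $G_\sigma$ iff there is either a bridge $(a,b)$ over some $f$ with \emph{both} $\omega^-_{(a,b)}(a,f),\omega^-_{(a,b)}(f,b)<\sigma$, or a crossing pair with \emph{both} uncrossings $<\sigma$. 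Your ``careful count'' of predecessors does not give this; without it you have no handle on which predecessors lie in $G_\sigma$, and the contrapositive step stalls. Second, the paper splits on this dichotomy: Proposition~\ref{prop3.24} handles the bridge case via Lemma~\ref{l3.23} (which pins down the concentric structure of the set $S$ of arcs over $f$ and shows all other arcs are tame), while Proposition~\ref{prop3.25} shows that in the pure-crossing case $b(\omega)=0$, forcing $1,n\in\omega^0$ and reducing to a smaller $n$. Third, the induction on $n$ is carried by Lemma~\ref{lemma22}, which lets one delete a common minimal arc $(i,i+1)\in\sigma\cap\omega$ without changing membership in $Sing$. You gesture at ``window-by-window'' counting, but the actual mechanism is this $(i,i+1)$-extraction plus the reduction of Proposition~\ref{prop3.25}, not a direct predecessor tally.

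In short: right shape, but the load-bearing lemmas (Proposition~\ref{prop3.20}, Lemma~\ref{l3.23}, Lemma~\ref{lemma22}, and the bridge/crossing split of Propositions~\ref{prop3.24}--\ref{prop3.25}) are absent, and nothing in your sketch substitutes for them.
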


To prove the theorem we need a few technical lemmas. Then we prove it in Propositions \ref{prop3.24}, \ref{prop3.25}.

Recall a notion of a predecessor of  $\sigma\in I_{n,k}$ from \S2.2.

\begin{lemma} Given $\sigma\in I_{n,k}^{\max}$ with $b(\sigma)+c(\sigma)=k$ then the number of predecessors of $\sigma$ is $2k$.
\end{lemma}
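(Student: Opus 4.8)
The plan is to enumerate directly the elementary moves forward out of $\sigma$ and then verify that distinct moves never produce the same link pattern; since every predecessor is by definition the result of some elementary move forward, this reduces the count of predecessors to the count of moves. First I would count the moves. An elementary move forward of the first type is the data of an arc $(i,j)\in\sigma$, a fixed point $f\in\sigma^0$ with $i<f<j$, and a choice of replacement arc $(i,f)$ or $(f,j)$; since $b_\sigma((i,j))=|\sigma^0\cap[i,j]|$ is exactly the number of fixed points strictly under $(i,j)$, there are precisely $2b(\sigma)$ moves of the first type. An elementary move forward of the second type is the data of a crossing pair $(i,j),(\ell,m)\in\sigma$ (so $i<\ell<j<m$) together with a choice of the ``separated'' replacement $\{(i,\ell),(j,m)\}$ or the ``nested'' replacement $\{(i,m),(\ell,j)\}$; since $c(\sigma)$ counts crossing pairs, there are precisely $2c(\sigma)$ moves of the second type. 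Each such move is legitimate, i.e. yields an element of $I_{n,k}$, because deleting an arc frees its two endpoints to become fixed points and the points used by the replacement then have exactly the status required. Under the hypothesis $b(\sigma)+c(\sigma)=k$ this gives $2b(\sigma)+2c(\sigma)=2k$ moves in all, so it suffices to prove that the assignment $(\text{move})\mapsto(\text{resulting link pattern})$ is injective.

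The injectivity I would prove using the arc-set difference $\sigma\setminus\sigma'$ and $\sigma'\setminus\sigma$. The key remark is that, since every point lies in at most one arc of $\sigma$, each replacement arc is genuinely new and never coincides with a deleted arc: in the first type the replacement arc shares the endpoint $i$ or $j$ with the deleted arc $(i,j)$, and $i,j$ each belong only to $(i,j)$; in the second type $(i,\ell)\ne(i,j)$ and $(j,m)\ne(i,j)$ (and likewise $(i,m)\ne(i,j)$, $(\ell,j)\ne(\ell,m)$), simply because $\ell\ne j$, $m\ne i$, etc. Consequently a move of the first type has $|\sigma\setminus\sigma'|=1$ while a move of the second type has $|\sigma\setminus\sigma'|=2$, which already separates the two types. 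Within the first type, $\sigma\setminus\sigma'=\{(i,j)\}$ recovers the deleted arc and $\sigma'\setminus\sigma$, being $\{(i,f)\}$ or $\{(f,j)\}$, recovers both $f$ and the chosen replacement; within the second type, $\sigma\setminus\sigma'=\{(i,j),(\ell,m)\}$ recovers the crossing pair and $\sigma'\setminus\sigma$ records whether the separated or the nested resolution was applied. Hence distinct moves give distinct predecessors.

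Combining the two counts, $\sigma$ has exactly $2b(\sigma)+2c(\sigma)=2k$ predecessors, as claimed; note that the argument in fact shows $\sigma$ has $2(b(\sigma)+c(\sigma))$ predecessors for any $\sigma\in I_{n,k}$, the hypothesis $b(\sigma)+c(\sigma)=k$ only being used to rewrite this as $2k$. I do not expect a genuine obstacle; the one place demanding a little care is the bookkeeping of $\sigma\setminus\sigma'$ and $\sigma'\setminus\sigma$ in the crossing case, where one must check that neither of the two new arcs equals one of the two deleted arcs $(i,j),(\ell,m)$ — which, as observed, could only fail if $\ell=j$ or $m=i$, contradicting $i<\ell<j<m$.
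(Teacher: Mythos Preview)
Your proposal is correct and follows essentially the same approach as the paper: count two predecessors for each bridge (arc over a fixed point) and two for each crossing, giving $2(b(\sigma)+c(\sigma))=2k$ in total. The paper's proof states this count in two sentences without spelling out the injectivity of the move-to-predecessor assignment; your explicit verification via the symmetric differences $\sigma\setminus\sigma'$ and $\sigma'\setminus\sigma$ is a welcome bit of extra care, but the underlying argument is the same.
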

\begin{proof}
Indeed for each fixed point $f$ and $(i,j)\in\sigma$ a bridge over it one has $\sigma'=(i,f)\sigma^-_{(i,j)}$ and $\sigma''=(f,j)\sigma^-_{(i,j)}$ are predecessors connected to this point and this bridge.

For $(i,j)(s,t)\in\sigma$ such that $i<s<j<t$ one has $$\sigma'=(i,s)(t,j)\sigma_{(i,j),(s,t)}^-\mbox{ and }\sigma''=(i,t)(s,j)\sigma_{(i,j),(s,t)}^-$$ are predecessors connected to these two crossing arcs.

Thus we get that each crossing and each bridge over a fixed point provides us exactly 2 predecessors.
\end{proof}

Recall that for $\sigma,\omega\in I_{n,k}$ such that $\sigma>\omega$ and for their $(0,1)$-maximal completions $\sigma(a,n+1),\ \omega(b,n+1)$ one has $\sigma(a,n+1)>\omega(b,n+1)$ by Proposition \ref{prop_compl}.
Note also that the elementary moves on $\sigma$ and its maximal completion correspond, namely

\begin{lemma}\label{lemmatex2} Let $\sigma\in I_{n,k}$ and let $f=\max \sigma^0$. Let $\widehat\sigma=\sigma(f,n+1)$ be its (0,1)-maximal completion. Then for any predecessor
$\sigma'$ of $\sigma$ there exists $\widehat\sigma'$ predecessor of $\widehat\sigma$ obtained by completion (not necessarily maximal).
\end{lemma}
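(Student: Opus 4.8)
The plan is to run through the two kinds of elementary move forward that can produce the predecessor $\sigma'$ of $\sigma$ and, in each case, to exhibit $\widehat\sigma'$ explicitly as a $(0,1)$-completion of $\sigma'$ --- that is, as $\sigma'$ with an extra arc joining one of its fixed points to $n+1$, not necessarily the \emph{maximal} such completion. Recall that the move producing $\sigma'$ is either of the first type, over an arc $(i,j)\in\sigma$ and a fixed point $p$ with $i<p<j$, or of the second type, over a crossing pair $(i,j),(\ell,m)\in\sigma$ with $i<\ell<j<m$. Since $f=\max\sigma^0$ is a fixed point of $\sigma$ it is never an endpoint of an arc, so the only way the move can touch $f$ is as the point $p$ of a first-type move; this splits the argument into the two cases below.

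First I would treat the case where the move does not involve $f$. Then $f$ is still a fixed point of $\sigma'$ (a second-type move neither creates nor destroys fixed points, and a first-type move over $p\ne f$ only trades $p$ for one of $i,j$), and all the arcs and the fixed point used by the move are still present in $\widehat\sigma=\sigma(f,n+1)$, with the new arc $(f,n+1)$ not interfering. Hence the same elementary move can be performed on $\widehat\sigma$, and it produces exactly $\widehat\sigma'=\sigma'(f,n+1)$, which is simultaneously a predecessor of $\widehat\sigma$ and a $(0,1)$-completion of $\sigma'$.

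The main case is a first-type move over $p=f$: then $(i,j)\in\sigma$ with $i<f<j$, and $\sigma'$ is one of $(i,f)\sigma^-_{(i,j)}$ or $(f,j)\sigma^-_{(i,j)}$. In $\widehat\sigma$ the arcs $(i,j)$ and $(f,n+1)$ now cross, since $i<f<j<n+1$, so a second-type move is available on this crossing pair and produces either $(i,f)(j,n+1)\,\widehat\sigma^-_{(i,j),(f,n+1)}$ or $(i,n+1)(j,f)\,\widehat\sigma^-_{(i,j),(f,n+1)}$. Since $\widehat\sigma^-_{(i,j),(f,n+1)}=\sigma^-_{(i,j)}$, the first of these equals $\bigl((i,f)\sigma^-_{(i,j)}\bigr)(j,n+1)$ and the second equals $\bigl((f,j)\sigma^-_{(i,j)}\bigr)(i,n+1)$, so for each of the two predecessors $\sigma'$ arising from the first-type move the required $\widehat\sigma'$ is the $(0,1)$-completion of $\sigma'$ joining $n+1$ to the fixed point $j$, respectively $i$, of $\sigma'$. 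Together with the first case this covers every predecessor and proves the lemma.

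The hard part is precisely this last case: after inserting $(f,n+1)$ one can no longer repeat the first-type move (the new arc lies ``across'' $(i,j)$), and the $(0,1)$-\emph{maximal} completion of $\sigma'$ would join $n+1$ to $\max(\sigma')^0$, which in general is neither $i$ nor $j$ --- this is exactly why only a possibly non-maximal completion is claimed. Spotting that the completion turns the first-type (bridge-over-fixed-point) configuration into a second-type (crossing) configuration is the crux; verifying which arcs and fixed points persist, and that the resulting $\widehat\sigma'$ is genuinely a predecessor (i.e. lies strictly above $\widehat\sigma$), is routine bookkeeping.
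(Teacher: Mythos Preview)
Your proof is correct and follows essentially the same approach as the paper: split into the case where the elementary move does not involve $f$ (same move on $\widehat\sigma$ yields $\sigma'(f,n+1)$) and the case of a first-type move at $p=f$ (the new arc $(f,n+1)$ crosses $(i,j)$, so a second-type move on this crossing in $\widehat\sigma$ recovers the two desired completions). Your write-up is in fact slightly more explicit than the paper's in spelling out why $f$ remains fixed in $\sigma'$ in the first case and in identifying $\widehat\sigma^-_{(i,j),(f,n+1)}=\sigma^-_{(i,j)}$ in the second.
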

\begin{proof}
If $\sigma'$ is a predecessor of $\sigma$ obtained by any elementary move forward, not connected to $f$ then respectively $\sigma'(f,n+1)$ is the corresponding predecessor
of $\widehat\sigma$ (note that it must not be the maximal completion of $\sigma'$).

If there exist $(a,b)\in\sigma$ such that $a<f<b$ then for
predecessor $\sigma'=\sigma_{(a,b)}^-(a,f)$ one has $\widehat
\sigma'=\sigma_{(a,b)}^-(a,f)(b,n+1)$ is the corresponding
predecessor of $\widehat\sigma$ and for predecessor
$\sigma''=\sigma_{(a,b)}^-(f,b)$ one has
$\widehat\sigma''=\sigma_{(a,b)}^-(a,n+1)(f,b)$ is the
corresponding predecessor of $\widehat \sigma$ (again in this case
$\widehat\sigma''$ is not necessarily maximal completion of
$\sigma''$).
\end{proof}

As a corollary we get:
\begin{proposition}\label{prop3.20} Given  $\sigma\in I_{n,k}^{\max}$ and $\omega\in G_\sigma\setminus\{\sigma\}$  then for every pair $\{\omega',\omega''\}$ of predecessors of $\omega$ at least one is in $G_\sigma$.
\end{proposition}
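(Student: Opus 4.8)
Given $\sigma\in I_{n,k}^{\max}$ and $\omega\in G_\sigma\setminus\{\sigma\}$, for every pair $\{\omega',\omega''\}$ of predecessors of $\omega$ at least one is in $G_\sigma$.

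The plan is to reduce to the previous lemmas on completions. Since $\omega\ne\sigma$ and both lie in $I_{n,k}$, we have $\omega<\sigma$ strictly, so $\omega$ has at least one fixed point; hence $\omega$ admits a $(0,1)$-maximal completion $\widehat\omega$, and $\sigma$ (being maximal, with $n-2k$ fixed points) admits a corresponding completion $\widehat\sigma\in I_{n+1,k+1}$ with $\widehat\sigma>\widehat\omega$ by Proposition \ref{prop_compl}. My first step is to observe that a pair $\{\omega',\omega''\}$ of predecessors of $\omega$ is precisely what Lemma \ref{lemmatex2} packages: either the pair arises from a bridge $(a,b)\in\omega$ over a fixed point $f$ of $\omega$, giving $\omega'=(a,f)\omega^-_{(a,b)}$ and $\omega''=(f,b)\omega^-_{(a,b)}$, or it arises from a crossing $(a,b),(s,t)\in\omega$ with $a<s<b<t$, giving $\omega'=(a,s)(b,t)\omega^-_{(a,b),(s,t)}$ and $\omega''=(a,t)(s,b)\omega^-_{(a,b),(s,t)}$. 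In both cases $\omega',\omega''>\omega$.

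The core of the argument is an induction on the number of fixed points of $\omega$ (equivalently, on $n-2k-|\sigma^0|$ where $\omega\in I_{n,k}$; note $\sigma$ itself has the maximal number $n-2k$ of fixed points, so $\omega$ has strictly fewer arcs over fixed points "used up"). In the base case, if $\omega$ has exactly one fixed point and $\sigma$ has the maximal number, one checks directly: the predecessors of a maximal $\sigma$ are classified by the lemma preceding Lemma \ref{lemmatex2} (each crossing or bridge over a fixed point gives exactly two), and one verifies the statement by comparing $R(\omega')$ and $R(\omega'')$ against $R(\sigma)$ entrywise, using that $\sigma\geq\omega$ forces one of the two "branches" of any elementary move to stay below $\sigma$. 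For the inductive step, pass to the completions: by Lemma \ref{lemmatex2} the pair $\{\omega',\omega''\}$ lifts to a pair $\{\widehat\omega',\widehat\omega''\}$ of predecessors of $\widehat\omega$, each obtained from $\omega',\omega''$ by a completion; and $\widehat\omega'\in G_{\widehat\sigma}$ iff $\omega'\in G_\sigma$ (and likewise for $\omega''$) because the partial order is preserved under maximal completion by Proposition \ref{prop_compl}, and the non-maximal completions occurring in Lemma \ref{lemmatex2} still satisfy $\widehat\omega',\widehat\omega''\leq\widehat\sigma$ exactly when the un-completed versions do (the extra arc touching $n+1$ is forced and is common to $\widehat\sigma$). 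Since $\widehat\omega$ has one fewer fixed point, by induction one of $\widehat\omega',\widehat\omega''$ lies in $G_{\widehat\sigma}$, hence the corresponding one of $\omega',\omega''$ lies in $G_\sigma$.

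The step I expect to be the main obstacle is making precise the claim that "$\widehat\omega'\in G_{\widehat\sigma}\Leftrightarrow\omega'\in G_\sigma$" when $\widehat\omega'$ is a \emph{non-maximal} completion of $\omega'$ (the case $\omega''=(f,b)\omega^-_{(a,b)}$ produces $\widehat\omega''=\omega^-_{(a,b)}(a,n+1)(f,b)$, which need not be maximal). Proposition \ref{prop_compl} as stated compares $\mathcal F_\sigma$ with its maximal completion; here I need the elementary observation that since $n+1$ is an end point joined to the global maximum-fixed-point in both $\widehat\sigma$ and $\widehat\omega'$, the matrix inequality $R(\widehat\sigma)\geq R(\widehat\omega')$ restricted to indices in $[1,n]$ is exactly $R(\sigma)\geq R(\omega')$, while the last row/column entries are automatically consistent because the arc to $n+1$ covers the same set of points in both. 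Spelling this out entrywise — and checking it is unaffected by whether the completion is maximal — is the one genuinely computational point, but it is routine once the bookkeeping is set up.
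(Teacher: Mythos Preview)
Your reduction via completions is the same as the paper's: one uses Proposition \ref{prop_compl} and Lemma \ref{lemmatex2} repeatedly to pass from $\sigma,\omega\in I_{n,k}$ to the $k$-complete case $\sigma,\omega\in I_{2k,k}$ (no fixed points). So far so good, and the bookkeeping you flag as the ``main obstacle'' is indeed routine.

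The genuine gap is your base case. First, the induction on the number of fixed points bottoms out at \emph{zero} fixed points (the $k$-complete case), not one; and your parenthetical ``$\sigma$ has the maximal number'' is confused, since $\sigma$ and $\omega$ always have the same number $n-2k$ of fixed points. More importantly, once you are in the $k$-complete case all predecessors of $\omega$ come from uncrossing a pair of arcs, and the claim that at least one of the two uncrossings stays $\leq\sigma$ is \emph{not} a direct entrywise check of $R$-matrices. This is precisely where the paper spends almost all of its effort: it runs a second induction, this time on the number of crossings $c(\omega)$, and for the inductive step it picks $\upsilon\in G_\sigma$ with $\omega$ obtained from $\upsilon$ by one crossing move, then splits the predecessors of $\omega$ into three types and analyses each through several subcases (some requiring two nested applications of the inductive hypothesis and pictures to track six endpoints). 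Your sentence ``$\sigma\geq\omega$ forces one of the two branches of any elementary move to stay below $\sigma$'' is exactly the proposition you are trying to prove, not an argument for it; the subtlety is that maximality of $\sigma$ is essential (the statement is false for general $\sigma$), and that global constraint does not translate into a local inequality on a single entry of $R$. You have correctly identified the easy half of the proof and missed that the hard half exists.
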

\begin{proof}
This claim is true only for $\sigma\in I_{n,k}^{\max}$, so its
proof is rather long and technical. We simply make all possible cases one by one. For $\sigma\in I_{n,k}^{\max}$, $\omega\in
G_\sigma$ and its predecessor $\omega'$ satisfying
$\omega'\leq\sigma$ one has by Proposition \ref{prop_compl} and Lemma \ref{lemmatex2}
that for (0,1)-maximal completions
$\widehat\sigma,\widehat\omega$ there exists completion $\widehat\omega'$ satisfying $\widehat\omega'$ is a predecessor of $\widehat\omega$ and $\widehat\omega'\leq\widehat\sigma$. Thus, using these two claims $n-2k$ times it is enough to
 show the claim only for $\sigma,\omega\in I_{2k,k}$, that is $k-$complete link patterns  where all the predecessors are obtained by ``uncrossing'' a given cross. We will show the claim by induction on the number of crosses in $\omega.$

If $c(\omega)=1$ then obviously $\sigma$ is one of the only two predecessors of $\omega$ so the claim is trivially true.

Now assume that this is true for $\omega'$ with at most $j-1$ crosses and show for $\omega$ with $j$ crosses.

By the construction of $G_\sigma$ there exists $\upsilon\in G_\sigma$ such that $\omega=(a,c)(b,d)\phi$ where $a<b<c<d$ and
$\upsilon=(a,b)(c,d)\phi$ or $\upsilon=(a,d)(b,c)\phi$ and in both cases $cr(\upsilon)=j-1$. Before we proceed, note that since $codim_{\mathcal{F}_\upsilon}\mathcal{Z}_\omega=1$:
\begin{itemize}
\item[(1)] if $\upsilon=(a,b)(c,d)\phi$ then there are no
$(i,j)\in\phi$ such that $i<a,b<j<c$ or $b<i<c,d<j$;
\item[(2)] if $\upsilon=(a,d)(b,c)\phi$ then there are no $(i,j)\in\phi$ such that
$a<i<b$ and $c<j<d$.
\end{itemize}

There are 3 categories of predecessors of $\omega$:
\begin{itemize}
\item[(i)] connected to $(p,r)(q,s)\in\phi$ such that $p<q<r<s$;
\item[(ii)] connected either to $(a,c)$ and $(i,j)\in\phi$ crossing $(a,c)$ that is such that either $a<i<c<j$ or $i<a<j<c$; or respectively connected to $(b,d)$ and $(i,j)\in\phi$ crossing $(b,d)$ that is
such that either $i<b<j<d$ or $b<i<d<j$.
\item[(iii)] $(a,b)(c,d)\phi$ and $(a,d)(b,c)\phi$.
\end{itemize}
By our construction at least one of predecessors in (iii) satisfies the claim. Let us study the predecessors of types (i) and (ii)

(i) For any $(p,r)(q,s)\in\phi$ such that $p<q<r<s$ and for $\upsilon=(a,b)(c,d)\phi$ or $(a,d)(b,c)\phi$ one has by induction assumption that either
predecessor $\upsilon'=\upsilon^-_{(p,r)(q,s)}(p,q)(r,s)<\sigma$ or
 or  predecessor $\upsilon'=\upsilon^-_{(p,r)(q,s)}(p,s)(q,r)<\sigma$. In both cases
$$\omega'=\left\{\begin{array}{ll}(\upsilon')^-_{(a,b)(c,d)}(a,c)(b,d)&{\rm if}\ \upsilon=(a,b)(c,d)\phi;\\
(\upsilon')^-_{(a,d)(b,c)}(a,c)(b,d)&{\rm if}\ \upsilon=(a,d)(b,c)\phi;\\
\end{array}\right.$$
is a predecessor of $\omega$ satisfying $\omega'<\upsilon'$ so that $\omega'<\sigma$ and we are done.

(ii) Obviously, case $(i,j)$ intersecting $(a,c)$ and case $(i,j)$ intersecting $(b,d)$ are obtained one from another by Sch\"utzenberger transformation of $\sigma,\omega$, so it is enough to consider only the case $(i,j)$ intersecting $(a,c).$  Here we consider two types of $\upsilon$ separately.

First  assume $\upsilon=(a,b)(c,d)\phi$.
\begin{itemize}
\item{}
Let $(i,j)\in\phi$ be such that $i<a<j<c$. The corresponding predecessors of $\omega$ are $\omega'=(i,a)(j,c)(b,d)\phi_{(i,j)}^-$ and
$\omega''=(i,c)(a,j)(b,d)\phi_{(i,j)}^-$. By (1)  one has $i<a<j<b$ so that $\upsilon$ has two predecessors $\upsilon'=(i,a)(j,b)(c,d)\phi_{(i,j)}^-$
and $\upsilon''=(i,b)(a,j)(c,d)\phi_{(i,j)}^-$ connected to crossing arcs $(i,j),(a,b)$. By induction hypothesis either $\upsilon'\leq\sigma$ or
$\upsilon''\leq\sigma.$ Note that $\upsilon'$ is a predecessor of $\omega'$ and
$\upsilon''$ is a predecessor of $\omega''$ so that at least one of $\omega',\omega''$ is smaller than $\sigma.$

\item{} Let $(i,j)\in\phi$ be such that $a<i<c<j$. The corresponding predecessors of $\omega$ are
$\omega'=(a,i)(c,j)(b,d)\phi_{(i,j)}^-$ and
$\omega''=(a,j)(i,c)(b,d)\phi_{(i,j)}^-$. By (1) one has either $b<i<c<j<d$ or $i<b$.

If $b<i<c<j<d$ then
\begin{align*}
\upsilon'&=(a,b)(i,d)(c,j)\phi_{(i,j)}^-,\\
\upsilon''&=(a,b)(i,c)(j,d)\phi_{(i,j)}^-
\end{align*}
and are predecessors of $\upsilon$ connected to arcs $(i,j)$ and $(c,d)$. By induction hypothesis at least one of them is in $G_\sigma$. Further note that $\upsilon'$ is a predecessor of $\omega'$ and
$\upsilon''$ is a predecessor of $\omega''$ so that at least one of $\omega',\omega''$ is smaller than $\sigma.$

If $i<b$ then $\upsilon'=(a,i)(b,j)(c,d)\phi_{(i,j)}^-$ and
$\upsilon''=(a,j)(i,b)(c,d)\phi^-_{(i,j)}$ are the predecessors of
$\upsilon$ connected to arcs $(a,b),(i,j)$ and by induction hypothesis
at least one of them is less or equal to $\sigma.$
\begin{itemize}
\item{} If  in addition $j>d$ then  $\upsilon'$ is a predecessor of $\omega'$ and $\upsilon''$ is a predecessor of $\omega''$ so that at least one of $\omega',\omega''$ is less than $\sigma$.
\item{} The case $j<d$  is more subtle and we  draw the corresponding link patterns in order to see the picture. Since all the arcs but connected to $a,b,c,d,i,j$ are the same in all our link patterns we can ignore them and draw only those connected to these 6 points. One has:
\begin{center}
\begin{picture}(130,140)(80,-60)
\setlength{\unitlength}{.92pt}
\put(-40,35){$\omega=$}\multiput(-10,40)(20,0){6} {\circle*{3}}
\put(-12,25){$a$}\put(8,25){$i$}\put(28,25){$b$}\put(48,25){$c$}
\put(68,25){$j$}\put(88,25){$d$}\qbezier(-10,40)(20,80)(50,40)\qbezier(10,40)(40,80)(70,40)
\qbezier(30,40)(60,80)(90,40)
\put(120,35){$\upsilon=$}\multiput(150,40)(20,0){6} {\circle*{3}}
\put(148,25){$a$}\put(168,25){$i$}\put(188,25){$b$}\put(208,25){$c$}
\put(228,25){$j$}\put(248,25){$d$}\qbezier(150,40)(170,70)(190,40)\qbezier(170,40)(200,80)(230,40)
\qbezier(210,40)(230,70)(250,40)
\put(-40,-5){$\omega'=$}\multiput(-10,0)(20,0){6} {\circle*{3}}
\put(-12,-15){$a$}\put(8,-15){$i$}\put(28,-15){$b$}\put(48,-15){$c$}
\put(68,-15){$j$}\put(88,-15){$d$}\qbezier(-10,0)(0,20)(10,0)\qbezier(30,0)(60,40)(90,0)
\qbezier(50,0)(60,20)(70,0)
\put(120,-5){$\upsilon'=$}\multiput(150,0)(20,0){6} {\circle*{3}}
\put(148,-15){$a$}
\put(168,-15){$i$}\put(188,-15){$b$}\put(208,-15){$c$}\put(228,-15){$j$}\put(248,-15){$d$}
\qbezier(150,0)(160,20)(170,0)\qbezier(190,0)(210,30)(230,0)\qbezier(210,0)(230,30)(250,0)
\put(-40,-45){$\omega''=$}
\multiput(-10,-40)(20,0){6} {\circle*{3}}
\put(-12,-55){$a$}
\put(8,-55){$i$}\put(28,-55){$b$}\put(48,-55){$c$}\put(68,-55){$j$}\put(88,-55){$d$}
\qbezier(-10,-40)(30,8)(70,-40)\qbezier(10,-40)(30,-10)(50,-40)\qbezier(30,-40)(60,0)(90,-40)
\put(120,-45){$\upsilon''=$}\multiput(150,-40)(20,0){6} {\circle*{3}}
\put(148,-55){$a$} \put(168,-55){$i$} \put(188,-55){$b$} \put(208,-55){$c$}\put(228,-55){$j$}\put(248,-55){$d$}
\qbezier(150,-40)(190,8)(230,-40)\qbezier(170,-40)(180,-20)(190,-40)
\qbezier(210,-40)(230,-10)(250,-40)
\end{picture}
\end{center}
If $\upsilon''<\sigma$ then since $\upsilon''$ is a predecessor of
$\omega''$ we are done. If $\upsilon'<\sigma$ then consider its
predecessors connected to arcs $(b,j),(c,d)$. They are
$\upsilon'_1=(a,i)(b,j)(c,d)\phi^-_{(i,j)}$ and
$\upsilon'_2=(a,i)(b,c)(j,d)\phi^-_{(i,j)}.$ By induction hypothesis
applied to $\upsilon'$ either $\upsilon'_1\leq \sigma$ or $\upsilon'_2\leq
\sigma$. Note that $\upsilon'_1=\omega'$ thus in this case we are
done.
 If $\upsilon'_2\leq\sigma$ note that $\upsilon'_2$ is a predecessor of $(a,j)(i,d)(b,c)\phi^-_{(i,j)}$ which is in turn is a predecessor of
$\omega''$ so that in this case again $\omega''<\sigma$.
Thus, in this case also at least one of $\omega',\omega''$ less than $\sigma.$
\end{itemize}
\end{itemize}

Consider case $\upsilon=(a,d)(b,c)\phi$.
\begin{itemize}
\item{}
Let $(i,j)\in\phi$ be such that $i<a<j<c$. The corresponding predecessors of $\omega$ are $\omega'=(i,a)(j,c)(b,d)\phi_{(i,j)}^-$ and
$\omega''=(i,c)(a,j)(b,d)\phi_{(i,j)}^-$.
\begin{itemize}
\item{} If in addition $j<b$ then $(i,j)$ crosses $(a,d)$ in $\upsilon$ and does not cross $(b,c)$. The predecessors of $\upsilon$ are
$\upsilon'=(i,a)(j,d)(b,c)\phi^-_{(i,j)}$ and $\upsilon''=(i,d)(a,j)(b,c)\phi^-_{(i,j)}$. By induction hypothesis at least one of them is less or equal to $\sigma$ and as one can see immediately $\upsilon'$ is a predecessor of $\omega'$ and $\upsilon''$ is a predecessor of $\omega''$ so that at least one of $\omega', \omega''$ less than $\sigma.$
\item{} The case $j>b$  is more subtle and and we  draw the corresponding link patterns in order to see the picture. We again draw only arcs connected to
$a,b,c,d,i,j$.  One has:
\begin{center}
\begin{picture}(130,140)(80,-60)
\setlength{\unitlength}{.92pt}
\put(-40,35){$\omega=$}
\multiput(-10,40)(20,0){6} {\circle*{3}} 
\put(-12,25){$i$}\put(8,25){$a$}\put(28,25){$b$}\put(48,25){$j$}
\put(68,25){$c$}\put(88,25){$d$}
\qbezier(-10,40)(20,80)(50,40)\qbezier(10,40)(40,80)(70,40)\qbezier(30,40)(60,80)(90,40)
\put(120,35){$\upsilon=$}
\multiput(150,40)(20,0){6} {\circle*{3}} 
\put(148,25){$i$}
\put(168,25){$a$}\put(188,25){$b$}\put(208,25){$j$}\put(228,25){$c$}\put(248,25){$d$}
\qbezier(150,40)(180,80)(210,40)\qbezier(170,40)(210,90)(250,40)\qbezier(190,40)(210,70)(230,40)
\put(-40,-5){$\omega'=$}
\multiput(-10,0)(20,0){6} {\circle*{3}} 
\put(-12,-15){$i$}
\put(8,-15){$a$}\put(28,-15){$b$}\put(48,-15){$j$}\put(68,-15){$c$}\put(88,-15){$d$}
\qbezier(-10,0)(0,20)(10,0)\qbezier(30,0)(60,40)(90,0)\qbezier(50,0)(60,20)(70,0)
\put(120,-5){$\upsilon'=$}
\multiput(150,0)(20,0){6} {\circle*{3}} 
\put(148,-15){$i$}
\put(168,-15){$a$}\put(188,-15){$b$}\put(208,-15){$j$}\put(228,-15){$c$} \put(248,-15){$d$}
\qbezier(150,0)(170,30)(190,0)\qbezier(170,0)(210,50)(250,0)\qbezier(210,0)(220,20)(230,0)
\put(-40,-45){$\omega''=$}\multiput(-10,-40)(20,0){6} {\circle*{3}} 
\put(-12,-55){$i$}
\put(8,-55){$a$}\put(28,-55){$b$}\put(48,-55){$j$}\put(68,-55){$c$}\put(88,-55){$d$}
\qbezier(-10,-40)(30,8)(70,-40)\qbezier(10,-40)(30,-10)(50,-40)\qbezier(30,-40)(60,0)(90,-40)
\put(120,-45){$\upsilon''=$}\multiput(150,-40)(20,0){6} {\circle*{3}} 
\put(148,-55){$i$}
\put(168,-55){$a$}\put(188,-55){$b$}\put(208,-55){$j$}\put(228,-55){$c$}\put(248,-55){$d$}
\qbezier(150,-40)(190,8)(230,-40)\qbezier(170,-40)(210,8)(250,-40)\qbezier(190,-40)(200,-20)(210,-40)
\end{picture}
\end{center}
and by induction hypothesis $\upsilon'<\sigma$ or $\upsilon''<\sigma$. Note that $\upsilon''$ is a predecessor of $\omega''$ so that if $\upsilon''<\sigma$ we are done.
If $\upsilon'<\sigma$ then by induction hypothesis at least one of predecessors connected to arcs $(i,b),(a,d)$ is less or equal to $\sigma.$ These predecessors are
$\upsilon'_1=(i,a)(b,d)(j,c)\phi^-_{(i,j)}=\omega'$ and $\upsilon'_2=(i,d)(a,b)(j,c)\phi^-_{(i,j)}$. One has $\upsilon'_2$ is a predecessor of $(i,c)(a,b)(j,d)\phi^-_{(i,j)}$ which is a predecessor of
$\omega''$, so that in this case also either $\omega'<\sigma$ or $\omega''<\sigma.$
\end{itemize}
\item{} Let $(i,j)\in\phi$ be such that $a<i<c<j$. The corresponding predecessors of $\omega$ are $\omega'=(a,i)(b,d)(c,j)\phi^-_{(i,j)}$ and
$\omega''=(a,j)(i,c)(b,d)\phi^-_{(i,j)}$. By (2) one has either $i>b$ or $i<b$ and $j>d$.

If $i<b$ and $j>d$ then $\upsilon=(a,d)(i,j)(b,c)\phi^-_{(i,j)}$ has  predecessors $\upsilon'=(a,i)(b,c)(d,j)\phi^-_{(i,j)}$ and
$\upsilon''=(a,j)(i,d)(b,c)\phi^-_{(i,j)}$ connected to arcs $(a,d),(i,j)$. By induction hypothesis at least one of them is less or equal to $\sigma$ and since $\upsilon'$ is a predecessor of $\omega'$ and $\upsilon''$ is a predecessor of $\omega''$ we are done.

If $i>b$ and $j<d$ then $\upsilon=(a,d)(b,c)(i,j)\phi^-_{(i,j)}$ has predecessors
$\upsilon'=(a,d)(b,i)(c,j)\phi^-_{(i,j)}$ and $\upsilon''=(a,d)(b,j)(i,c)\phi^-_{(i,j)}$
connected to arcs $(b,c),(i,j)$ and since $\upsilon'$ is a predecessor of $\omega'$ and $\upsilon''$ is a predecessor of $\omega''$ by induction hypothesis applied to $\upsilon$ we are done.

Finally, the case $i>b$ and $j>d$  is more subtle so we again use a picture. One has:
\begin{center}
\begin{picture}(130,140)(80,-60)
\setlength{\unitlength}{.92pt}
\put(-40,35){$\omega=$}
 \multiput(-10,40)(20,0){6} {\circle*{3}} 
 \put(-12,25){$a$} \put(8,25){$b$} \put(28,25){$i$} \put(48,25){$c$} \put(68,25){$d$} \put(88,25){$j$} \qbezier(-10,40)(20,80)(50,40) \qbezier(10,40)(40,80)(70,40) \qbezier(30,40)(60,80)(90,40)
\put(120,35){$\upsilon=$}
 \multiput(150,40)(20,0){6} {\circle*{3}} 
 \put(148,25){$a$} \put(168,25){$b$} \put(188,25){$i$} \put(208,25){$c$} \put(228,25){$d$} \put(248,25){$j$} \qbezier(150,40)(190,90)(230,40) \qbezier(170,40)(190,70)(210,40) \qbezier(190,40)(220,80)(250,40)
\put(-40,-5){$\omega'=$}
 \multiput(-10,0)(20,0){6} {\circle*{3}} 
 \put(-12,-15){$a$} \put(8,-15){$b$} \put(28,-15){$i$} \put(48,-15){$c$} \put(68,-15){$d$} \put(88,-15){$j$} \qbezier(-10,0)(10,30)(30,0) \qbezier(10,0)(40,40)(70,0) \qbezier(50,0)(70,30)(90,0)
\put(120,-5){$\upsilon'=$}
 \multiput(150,0)(20,0){6} {\circle*{3}} 
 \put(148,-15){$a$} \put(168,-15){$b$} \put(188,-15){$i$} \put(208,-15){$c$} \put(228,-15){$d$} \put(248,-15){$j$} \qbezier(150,0)(190,50)(230,0) \qbezier(170,0)(180,20)(190,0) \qbezier(210,0)(230,30)(250,0)
\put(-40,-45){$\omega''=$}
 \multiput(-10,-40)(20,0){6} {\circle*{3}} 
 \put(-12,-55){$a$} \put(8,-55){$b$} \put(28,-55){$i$} \put(48,-55){$c$} \put(68,-55){$d$} \put(88,-55){$j$} \qbezier(-10,-40)(40,8)(90,-40) \qbezier(10,-40)(40,0)(70,-40) \qbezier(30,-40)(40,-20)(50,-40)
\put(120,-45){$\upsilon''=$}
 \multiput(150,-40)(20,0){6} {\circle*{3}} 
 \put(148,-55){$a$} \put(168,-55){$b$} \put(188,-55){$i$} \put(208,-55){$c$} \put(228,-55){$d$} \put(248,-55){$j$} \qbezier(150,-40)(190,8)(230,-40) \qbezier(170,-40)(210,8)(250,-40) \qbezier(190,-40)(200,-20)(210,-40)
\end{picture}
\end{center}
If $\upsilon'<\sigma$ then since it is a predecessor of $\omega'$ we get $\omega'<\sigma.$ If $\upsilon''<\sigma$ then one of its predecessors
$\upsilon''_1=(a,j)(b,d)(i,c)\phi^-_{(i,j)}=\omega''$ or $\upsilon''_2=(a,b)(i,c)(d,j)\phi^-_{(i,j)}$ connected to arcs $(a,d),(b,j)$ is less or equal to $\sigma$. Thus it is enough to note that
$\upsilon''_2$ is a predecessor of $(a,i)(b,c)(d,j)\phi^-_{(i,j)}$ which is in turn is a predecessor of $\omega'$, thus again at least one of $\omega',\omega''$ is less or equal to $\sigma.$
\end{itemize}
\end{proof}
As a straightforward corollary we get
\begin{corollary}\label{main} Given  $\sigma\in I_{n,k}^{\max}$ and $\omega\in G_\sigma\setminus\{\sigma\}$, one has $\mathcal F_\omega$ is in the singular locus of $\mathcal F_\sigma$ if and only if either there exists $(a,b)\in\omega$ and fixed point $i\ :\ a<i<b$ such that both $\omega_{(a,b)}^-(a,i),\omega_{(a,b)}^-(i,b)<\sigma$ or there exists crossing pair $(a,b),(c,d)\in \omega$ where $a<c<b<d$ such that both $\omega^-_{(a,b)(c,d)}(a,c)(b,d),\omega^-_{(a,b)(c,d)}(a,d)(c,b)<\sigma$.
\end{corollary}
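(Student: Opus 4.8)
The plan is to read the corollary off from Fresse's numerical criterion (the theorem of Fresse recalled in \S\ref{secGG}) together with Proposition \ref{prop3.20}. That criterion says $\mathcal F_\omega$ lies in the singular locus of $\mathcal F_\sigma$ exactly when $a_{\sigma,\omega}>p_\sigma$, and since $\sigma\in I_{n,k}^{\max}$ has $b(\sigma)=c(\sigma)=0$ we get $p_\sigma=a_{\sigma,\sigma}=\binom{n-k}{2}-\binom{n-2k}{2}$ and, moreover, $\sigma$ has no predecessors at all. First I would split the edges of $G_\sigma$ at $\omega$ as $a_{\sigma,\omega}=e^+(\omega)+e^-(\omega)$, where $e^+(\omega)$ is the number of predecessors of $\omega$ lying in $G_\sigma$ (i.e.\ being $\le\sigma$) and $e^-(\omega)$ is the number of involutions of which $\omega$ is a predecessor; each involution of the latter kind is automatically $<\omega\le\sigma$, hence lies in $G_\sigma$ unconditionally.

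The heart of the argument is the counting identity
\[
\bigl(b(\omega)+c(\omega)\bigr)+e^-(\omega)=k(n-2k)+\binom{k}{2}=\binom{n-k}{2}-\binom{n-2k}{2}=p_\sigma .
\]
By the lemma immediately preceding Proposition \ref{prop3.20}, the predecessors of $\omega$ fall into $b(\omega)+c(\omega)$ unordered pairs: one pair $\{\omega^-_{(a,b)}(a,i),\omega^-_{(a,b)}(i,b)\}$ for each arc $(a,b)\in\omega$ and fixed point $i$ with $a<i<b$, and one pair $\{\omega^-_{(a,b)(c,d)}(a,c)(b,d),\omega^-_{(a,b)(c,d)}(a,d)(c,b)\}$ for each crossing $(a,b),(c,d)\in\omega$ with $a<c<b<d$. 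Dually, unravelling the definition of a backward move one sees that a backward move of the first type from $\omega$ amounts to choosing an arc of $\omega$ together with a fixed point lying outside it, and a backward move of the second type amounts to choosing a pair of arcs of $\omega$ that do not cross (disjoint or nested). Hence the number of first-type pairs of predecessors plus the number of first-type backward moves equals the total number of ways of choosing one of the $k$ arcs and one of the $n-2k$ fixed points, namely $k(n-2k)$; and likewise for the second type the total is $\binom{k}{2}$, the number of all pairs of arcs. Summing these two equalities yields the displayed identity.

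Granting the identity, the corollary follows at once: Proposition \ref{prop3.20} says each of the $b(\omega)+c(\omega)$ pairs of predecessors of $\omega$ contains at least one member of $G_\sigma$, so $e^+(\omega)=\bigl(b(\omega)+c(\omega)\bigr)+D$, where $D\ge0$ is the number of pairs both of whose members lie in $G_\sigma$; together with the identity this gives
\[
a_{\sigma,\omega}-p_\sigma=e^+(\omega)+e^-(\omega)-p_\sigma=D .
\]
So $\mathcal F_\omega$ is in the singular locus iff $D\ge1$, i.e.\ iff some pair of one of the two listed shapes has both members $\le\sigma$, which is the asserted dichotomy. To obtain the ``$<\sigma$'' of the statement I would add the remark that such a pair cannot consist of $\sigma$ and some other member $\le\sigma$: the two members of any pair have equal dimension $\dim\mathcal F_\omega+1$, and one irreducible closed subvariety cannot be a proper subvariety of another of the same dimension, so if one member is $\sigma$ the other is $\not\le\sigma$.

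The main obstacle is not conceptual but a bit of bookkeeping in the middle paragraph: setting up the two dual correspondences precisely, and in particular checking that distinct elementary moves out of a fixed $\omega$ produce pairwise distinct link patterns, so that ``number of moves'' really does equal ``number of neighbours'' in $G_\sigma$. The point that makes this routine is that every end point of $\omega$ lies in at most one arc, which excludes all the potential coincidences; but the clean identity $\bigl(b(\omega)+c(\omega)\bigr)+e^-(\omega)=p_\sigma$ rests on this verification.
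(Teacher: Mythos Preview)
Your argument is correct and is exactly the computation the paper has in mind when it calls this a ``straightforward corollary'' of Proposition~\ref{prop3.20}: split $a_{\sigma,\omega}$ into forward and backward neighbours, use the identity $(b(\omega)+c(\omega))+e^-(\omega)=k(n-2k)+\binom{k}{2}=p_\sigma$, and then Proposition~\ref{prop3.20} gives $a_{\sigma,\omega}-p_\sigma=D$. The bookkeeping you flag (distinct moves give distinct neighbours) is indeed routine.

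One genuine slip: your last paragraph claims that the two members of a predecessor pair always have dimension $\dim\mathcal F_\omega+1$. This is false --- an elementary move forward can raise the dimension by more than $1$ (e.g.\ replacing a bridge $(a,b)$ over two fixed points $f<g$ by $(a,f)$ removes two bridges at once), and the two members of a pair need not have equal dimension. The conclusion you want, that if one member of a pair equals $\sigma$ then the other is not $\le\sigma$, is nevertheless true and is easiest to check directly from the rank matrices: if $\sigma=\omega^-_{(a,b)}(a,f)$ then the other member $\omega''=\sigma^-_{(a,f)}(f,b)$ satisfies $R_{[f,b]}(\omega'')=R_{[f,b]}(\sigma)+1$, and analogously for the second-type pairs one of $R_{[c,b]}$ or $R_{[a,c]}$ strictly increases. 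With this correction your argument that $D\ge1$ is equivalent to ``both members strictly $<\sigma$'' goes through.
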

For $\sigma=(i_1,j_1)\ldots(i_k,j_k)\in I_{n,k}$ put $${\widetilde\sigma}^{(i_s,j_s)}=(i_1',j_1')\ldots(i_{s-1}',j_{s-1}')(i_{s+1}',j_{s+1}')\ldots(i_k',j_k')$$ to be a link pattern in $I_{n-1,k-1}$ obtained by
$$d_t'=\left\{\begin{array}{ll}d_t&{\rm if}\ d_t<i_s;\\
                                d_t-1&{\rm if}\ i_s<d_t<j_s;\\
                                                                d_t-2&{\rm if}\ d_t>j_s;\\
                                                                \end{array}\right.$$
                                                                for any $d_t\in\{i_r,j_r\}_{r=1,r\ne s}^k$.
                                                                In other words we extract $(i_s,j_s)$ together with the points $i_s,j_s.$
\begin{lemma}\label{lemma22} Let $\sigma\in I_{n,k}^{\max}$ and $\upsilon\in I_{n,k}$ be such that $(i,i+1)\in \sigma,\upsilon$.
Then ${\widetilde{\upsilon}}^{(i,i+1)}\in G_{{\widetilde\sigma}^{(i,i+1)}}$ and  $F_{{\widetilde\upsilon}^{(i,i+1)}}$ is a singular point of
$\mathcal F_{{\widetilde\sigma}^{(i,i+1)}}$ if and only if $\upsilon\in G_{\sigma}$ and respectively $F_\upsilon$ is a singular point of $\mathcal F_\sigma$.
In particular, $\upsilon\in Sing(\sigma)$ iff ${\widetilde{\upsilon}}^{(i,i+1)}\in Sing({\widetilde\sigma}^{(i,i+1)}).$
\end{lemma}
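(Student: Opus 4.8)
The plan is to reduce the whole statement to the combinatorial singularity criterion of Corollary~\ref{main}, after which it becomes bookkeeping with the rank matrices $R(\cdot)$ and with the predecessor relation. For an involution $\upsilon$ containing the arc $(i,i+1)$ write $m(\upsilon):={\widetilde\upsilon}^{(i,i+1)}$; then $m$ is a bijection from $\{\upsilon\in I_{n,k}:(i,i+1)\in\upsilon\}$ onto the set of involutions obtained by deleting that arc together with its two points, its inverse re-inserting an arc on two new consecutive points. The first step is the \emph{order isomorphism}: for $\upsilon_1,\upsilon_2$ both containing $(i,i+1)$ one has $\upsilon_1\leq\upsilon_2$ if and only if $m(\upsilon_1)\leq m(\upsilon_2)$. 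This is checked on rank matrices: deleting the arc $(i,i+1)$, whose endpoints are consecutive, merely collapses a gap inside every interval, so for each $p<q$ with $(p,q)\neq(i,i+1)$ one has $R_{[p,q]}(\upsilon)=R_{[p',q']}(m(\upsilon))+\varepsilon_{p,q}$, where $[p',q']$ is the image of $[p,q]$ under this gap-collapse and $\varepsilon_{p,q}\in\{0,1\}$ equals $1$ exactly when $[p,q]$ engulfs the arc $(i,i+1)$ (i.e.\ $p\leq i$ and $q\geq i+1$); both $[p',q']$ and $\varepsilon_{p,q}$ depend only on $(p,q)$ and not on $\upsilon$, while the one remaining entry $R_{[i,i+1]}(\upsilon)$ equals $1$ for every such $\upsilon$. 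Hence entrywise domination of rank matrices is both preserved and reflected by $m$. The same computation shows ${\widetilde\sigma}^{(i,i+1)}=m(\sigma)$ is again maximal --- removing an isolated minimal arc creates no crossing and leaves no fixed point under an arc.

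Next I would transfer the criterion of Corollary~\ref{main}. Being minimal, the arc $(i,i+1)$ crosses nothing and bridges no fixed point, so in each of the two predecessor moves of Corollary~\ref{main} applied to an $\omega$ with $(i,i+1)\in\omega$ the distinguished arc $(a,b)$ --- respectively the crossing pair $(a,b),(c,d)$ --- is disjoint from $\{i,i+1\}$. Consequently each of the resulting predecessors still contains $(i,i+1)$, $m$ carries it to the predecessor of $m(\omega)$ produced by the same move, and conversely every fixed point or crossing pair of $m(\omega)$ lifts back; so this is a bijection of the relevant predecessors. Since every involution occurring here --- $\omega$, those predecessors, and $\sigma$ itself --- contains $(i,i+1)$, the order isomorphism of the previous paragraph transports the inequalities of Corollary~\ref{main}, strictness included (as $m$ is injective and $m(\sigma)={\widetilde\sigma}^{(i,i+1)}$). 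Applying Corollary~\ref{main} to the two maximal patterns $\sigma$ and $m(\sigma)$ we obtain, for $\upsilon\neq\sigma$: $\upsilon\in G_\sigma$ with $F_\upsilon$ singular in $\mathcal F_\sigma$ is equivalent to $m(\upsilon)\in G_{m(\sigma)}$ with $F_{m(\upsilon)}$ singular in $\mathcal F_{m(\sigma)}$; the case $\upsilon=\sigma$ is trivial, both $F_\sigma$ and $F_{m(\sigma)}$ being smooth points. This is the first assertion.

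For the ``in particular'', recall from the theorem of Fresse quoted in \S\ref{secGG} that the singular locus of $\mathcal F_\sigma$ is the union of the $\mathcal F_\upsilon$ over the $\upsilon\in G_\sigma$ with $F_\upsilon$ singular; hence $Sing(\sigma)$ is precisely the set of $\leq$-maximal such $\upsilon$ (automatically $\neq\sigma$), and likewise for $m(\sigma)$. Two observations then finish the argument. First, if $(i,i+1)\in\upsilon$ and $\omega\geq\upsilon$ then $(i,i+1)\in\omega$, since $1=R_{[i,i+1]}(\upsilon)\leq R_{[i,i+1]}(\omega)\leq 1$. Second, $F_{m(\sigma)}$ is a smooth point of $\mathcal F_{m(\sigma)}$ (its dense orbit lies in the smooth locus), so every singular $\tau\leq m(\sigma)$ is strictly below it, and similarly for $\sigma$. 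Now let $\upsilon\ni(i,i+1)$: then $\upsilon\in Sing(\sigma)$ means $F_\upsilon$ is singular and no singular $\omega$ satisfies $\sigma>\omega>\upsilon$; by the first observation every such $\omega$ contains $(i,i+1)$, so by the first assertion and the order isomorphism this is equivalent to $F_{m(\upsilon)}$ being singular with no singular $\tau$ satisfying $m(\sigma)>\tau>m(\upsilon)$, i.e.\ $m(\upsilon)\in Sing(m(\sigma))=Sing({\widetilde\sigma}^{(i,i+1)})$. I expect the order-isomorphism step to be the main obstacle: it is conceptually clear but requires a careful case check according to the position of $p,q$ relative to $i,i+1$ --- in particular the entries with an endpoint in $\{i,i+1\}$ must be matched to the correct entries of $R(m(\upsilon))$ --- together with the verification that the gap-collapsed interval and the correction term $\varepsilon_{p,q}$ are genuinely independent of $\upsilon$.
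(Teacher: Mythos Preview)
Your proof is correct and follows essentially the same route as the paper: the core is the rank-matrix identity showing that $m:\upsilon\mapsto{\widetilde\upsilon}^{(i,i+1)}$ is an order isomorphism on involutions containing $(i,i+1)$, together with the observation that every predecessor of such a $\upsilon$ again contains $(i,i+1)$ (since the minimal arc $(i,i+1)$ participates in no elementary move forward). The paper states exactly this and then declares the remainder ``immediate.'' Your version is more explicit in two places: you check that $m(\sigma)$ remains maximal, and you invoke Corollary~\ref{main} to transfer singularity, which is cleaner than a direct edge count in $G_\sigma$ (the latter would also require matching the ``downward'' neighbours, where $(i,i+1)$ need not survive, and then verifying that the excess on both sides cancels). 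Your treatment of the $Sing$-equivalence, using that any $\omega\geq\upsilon$ inherits the arc $(i,i+1)$ via $R_{[i,i+1]}$, is likewise a point the paper leaves implicit.
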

\begin{proof}
Both parts of the Lemma are immediate. For the first part it is enough to note that for $\omega$ such that $(i,i+1)\in\omega$ one has

$$R_{s,t}(\omega)=\left\{\begin{array}{ll}R_{s,t}({\widetilde{\omega}}^{(i,i+1)})&{\rm if}\ t\leq i;\\
                                          R_{s-2,t-2}({\widetilde{\omega}}^{(i,i+1)})&{\rm if}\ s\geq i+1;\\
                                          R_{s,t-2}({\widetilde{\omega}}^{(i,i+1)})+1&{\rm if}\ s<i+1,\ t>i;\\
                                                                                     \end{array}\right.$$
Thus $R_{s,t}(\sigma)\geq R_{s,t}(\upsilon)$ for all $1\leq s<t\leq n$ if and only if $R_{s,t}({\widetilde\sigma}^{(i,i+1)})\geq    R_{s,t}({\widetilde\upsilon}^{(i,i+1)})$for any $1\leq s<t\leq n-2.$ Moreover since  for any predecessor $\upsilon'$ of $\upsilon$ one has $(i,i+1)\in\upsilon'$ we get exactly the same result for $\upsilon'$.
\end{proof}
In our next lemma we concentrate on $\omega\in Sing(\sigma)$.
\begin{lemma}\label{l3.23} Let $\sigma\in I_{n,k}^{\max}$ with $\rho(\sigma)\geq 4$ and let $\omega\in Sing(\sigma)$ such that there exists $f\in\omega^0$ and $(i,j)\in\omega$
such that $i<f<j$ and both $\omega_{(i,j)}^-(i,f),\,\omega_{(i,j)}^-(f,j)\in G_\sigma.$ Let $(a,b)\in\omega$ be minimal such arc, that is for any $(i,j)\in\omega$ satisfying the condition, one has $(i,j)\not\in[a,b]$. Put $k_1=|\{(i,j)\in\sigma\ :\ i<f<j\}|$. Let $S=\{(i,j)\in\omega\ :\ i<f<j\}$. One has
\begin{itemize}
\item[(i)] $|S|\geq k_1+2;$
\item[(ii)] All arcs of $S$ are concentric (that is for any $(i,j),(i',j')\in S$ such that $i<i'$ one has $i<i'<j'<j$ and $(a,b)$ is the minimal arc of $S$, that is if $(i,j)\in S\setminus\{(a,b)\}$ then $i<a<b<j$. For any $(i,j)\in S$ and any $f'\in\omega^0\cap[a,b]$ one has $\omega_{(i,j)}^-(i,f'),\,\omega_{(i,j)}^-(f',j)\in G_\sigma$, and there are no fixed points at $[i,j]\backslash[a,b]$ for any $(i,j)\in S$.
\item[(iii)] For any $(i,j)\in\omega\setminus S$ there are no fixed points under $(i,j)$ and any $(i,j),(i',j')$ in $\omega\setminus S$ do not intersect, and if $(i,j)\in\omega\backslash S$ intersects any $(c,d)\in S$ then $(i,j)$ intersects all the arcs of $S$.
\end{itemize}
\end{lemma}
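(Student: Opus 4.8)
The plan is to convert the hypothesis ``$\mathcal F_\omega$ is a component of the singular locus of $\mathcal F_\sigma$'' into rigid combinatorial constraints and then read (i)--(iii) off them. Recall from the proof of Proposition \ref{prop1} that $\dim\mathcal T_{F_\upsilon}(\mathcal F_\sigma)$ equals $\dim\mathcal F_\upsilon$ plus the number of predecessors of $\upsilon$ that are $\le\sigma$, and recall (proof of the lemma preceding Lemma \ref{lemmatex2}) that the predecessors of $\upsilon$ split into $b(\upsilon)+c(\upsilon)$ pairwise disjoint pairs, one per bridge of $\upsilon$ over a fixed point and one per crossing of $\upsilon$; by Theorem \ref{thM007} and the maximality of $\sigma$ one has $b(\upsilon)+c(\upsilon)={\rm codim}_{\mathcal F_\sigma}\mathcal F_\upsilon$. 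Combining these with Proposition \ref{prop3.20} (every such pair meets $G_\sigma$, and the pairs are disjoint) gives, for $\upsilon\in G_\sigma\setminus\{\sigma\}$: $\mathcal Z_\upsilon$ is a smooth point of $\mathcal F_\sigma$ iff \emph{every} predecessor-pair of $\upsilon$ meets $G_\sigma$ in exactly one element, and it is a singular point iff some pair lies entirely in $G_\sigma$ --- which is Corollary \ref{main}. Thus ``$\omega\in Sing(\sigma)$'' becomes: \textbf{(a)} some predecessor-pair of $\omega$ is contained in $G_\sigma$ (by hypothesis, the pair $\{\omega_{(a,b)}^-(a,f),\,\omega_{(a,b)}^-(f,b)\}$ is), and \textbf{(b)} for every $\omega'$ with $\omega<\omega'\le\sigma$, each predecessor-pair of $\omega'$ meets $G_\sigma$ in exactly one element. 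The whole proof is the confrontation of (a) and (b) with $\omega\le\sigma$ and the maximality of $\sigma$, Proposition \ref{prop3.20} being the device for ``promoting'' an orbit to a larger one still inside $G_\sigma$.

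I would establish clause (iii) and the concentricity part of (ii) together by contradiction. Suppose one of the forbidden local patterns occurs: an arc of $S$ strictly contained in $[a,b]$ (immediately against the minimality of $(a,b)$); two arcs of $S$ that cross; a fixed point of $\omega$ under an arc $(c,d)\in\omega\setminus S$; two arcs of $\omega\setminus S$ that cross; or an arc of $\omega\setminus S$ meeting some but not all arcs of $S$. In each non-immediate case the offending pair yields a predecessor-pair of $\omega$, and by Proposition \ref{prop3.20} one member $\omega'$ lies in $G_\sigma$. In the typical situation the move $\omega\to\omega'$ leaves $(a,b)$ and the fixed point $f$ untouched, so $\omega'$ still carries the bridge $(a,b)$ over $f$; then (b) applied to the pair $\{(\omega')_{(a,b)}^-(a,f),\,(\omega')_{(a,b)}^-(f,b)\}$ says exactly one member is $\le\sigma$, and one follows that member, applies the same dichotomy again, and after finitely many steps reaches either an orbit $\le\sigma$ that Corollary \ref{main} makes singular --- contradicting (b) --- or a purported element of $G_\sigma$ that in fact exceeds $\sigma$. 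When the move does touch $(a,b)$ it produces a smaller arc over $f$, and the same chase contradicts the minimality of $(a,b)$ or (b). Organising this chase by the relative positions of the offending arcs with respect to $(a,b)$ and $f$ --- in the same spirit as, and of comparable length to, the proof of Proposition \ref{prop3.20}, and crucially using $\sigma\in I_{n,k}^{\max}$ --- is the main obstacle. Here one should first reduce with Lemma \ref{lemma22} (discard arcs $(i,i+1)$ common to $\omega$ and $\sigma$) and with Proposition \ref{prop_compl} (strip maximal completions and terminal fixed points) to keep the list of configurations bounded.

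Granting concentricity, the remaining parts of (ii) follow by analogous predecessor-chasing: an arc $(c,d)\in S\setminus\{(a,b)\}$ nests $(a,b)$ and hence every $f'\in\omega^0\cap[a,b]$, and chaining the predecessor $\omega_{(a,b)}^-(a,f')$ (in $G_\sigma$ because $\omega$ is a component and $(a,b)$ is a bridge over $f'$ too) up through the intervening nesting arcs, using (b) at each intermediate orbit, forces $\omega_{(c,d)}^-(c,f')$ and $\omega_{(c,d)}^-(f',d)$ into $G_\sigma$; while a fixed point of $\omega$ under some $(c,d)\in S$ outside $[a,b]$ would, through the bridge $(c,d)$, give a predecessor of $\omega$ that is $\le\sigma$ and still detects singularity at $f$, contradicting (b) via Corollary \ref{main}. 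Finally, clause (i). By (ii) and (iii), the configuration of $\omega$ around $f$ is exactly the output shape of Proposition \ref{prop1}: writing $R=\{(p,q)\in\omega:\ p<a,\ b<q\}$ one has $S=R\sqcup\{(a,b)\}$, so $|S|\ge k_1+2$ amounts to $|R|\ge k_1+1$. The arcs of $R$ are precisely the arcs strictly over $f$ in the predecessor $\omega_{(a,b)}^-(a,f)$; comparing their number with the number $k_1$ of arcs of $\sigma$ over $f$ --- via the elementary identity expressing ``number of arcs spanning a point'' in terms of $R_{[1,f-1]}$, $R_{[f+1,n]}$ and the arc incident to that point, together with $R(\omega_{(a,b)}^-(a,f))\le R(\sigma)$ --- gives $|R|\ge k_1$, and the borderline case $|R|=k_1$ is excluded by the component hypothesis: if it held, then (using that $\sigma\ge\omega$ is maximal, so there is room on at least one side of $[a,b]$) an elementary move enlarging $(a,b)$ would produce a strictly larger orbit still $\le\sigma$ that is singular by Corollary \ref{main}, contradicting that $\mathcal Z_\omega$ is a component.
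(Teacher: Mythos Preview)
Your strategy for (ii) and (iii) is the paper's: case-split on the offending local configuration, use Proposition~\ref{prop3.20} to pass to a predecessor in $G_\sigma$, and derive a contradiction with the maximality of $\omega$ in the singular locus via Corollary~\ref{main}. The paper does exactly this (with explicit rank-matrix computations in the subcase where an arc of $\omega\setminus S$ straddles $a$ but not $f$), so that part of your plan is on track, though your parenthetical ``in $G_\sigma$ because $\omega$ is a component and $(a,b)$ is a bridge over $f'$ too'' is not automatic: that both predecessors of $\omega$ at $(a,b)$ and an \emph{arbitrary} $f'\in\omega^0\cap[a,b]$ lie in $G_\sigma$ is a separate sub-claim the paper proves by another short predecessor-chase, not a consequence of $\omega\in Sing(\sigma)$ alone.

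The real gap is in (i). You prove it last, using only the single hypothesis $\omega_{(a,b)}^-(a,f)\le\sigma$; that rank comparison yields $m_2(\omega)+1\le k_2$ but only $m_3(\omega)\le k_3$, hence $|S|-1\ge k_1$, and you then try to rule out equality by an ad hoc ``enlarge $(a,b)$'' move which is not specified and not obviously $\le\sigma$. The paper's argument is a four-line count that avoids all of this and does not need (ii) or (iii): write $k=k_1+k_2+k_3$ with $k_2=R_{[1,f]}(\sigma)$, $k_3=R_{[f,n]}(\sigma)$, and $|S|=k-m_2(\omega)-m_3(\omega)$. From $\omega_{(a,b)}^-(a,f)\le\sigma$ one reads $m_2(\omega)+1\le k_2$, and from the \emph{second} hypothesis $\omega_{(a,b)}^-(f,b)\le\sigma$ one reads $m_3(\omega)+1\le k_3$; adding gives $|S|\ge k_1+2$ directly. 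In short, you forgot to use both predecessors in the count, which is precisely what makes the inequality sharp without any borderline analysis.
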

\begin{proof}
(i) Let $k_2=|\{(i,j)\in \sigma\ :\ j\leq f\}|$ and $k_3=|\{(i,j)\in\sigma\ :\ i\geq f\}|$ then $k=k_1+k_2+k_3.$ Exactly in the same manner for $\upsilon\in G_\sigma$ let $m_1(\upsilon)=|\{(i,j)\in\upsilon\ :\ i<f<j\}|$,
$m_2(\upsilon)=|\{(i,j)\in\omega\ :\ j\leq f\}|$ and $m_3(\upsilon)=|\{(i,j)\in\upsilon\ :\ i\geq f\}|$. Again one has $m_1(\upsilon)+m_2(\upsilon)+m_3(\upsilon)=k$ and since $\upsilon\leq \sigma$ we get $m_2(\upsilon)\leq k_2$ and $m_3(\upsilon)\leq k_3.$
One also has $m_2(\omega_{(a,b)}^-(a,f))=m_2(\omega)+1\leq k_2$ and $m_3(\omega_{(a,b}^-(f,b)=m_3(\omega)+1\leq k_3.$ Thus $|S|=k-m_2(\omega)-m_3(\omega)\geq k_1+2.$

(ii) We have to consider the following three cases:
\begin{itemize}
\item[(a)] There exists $(c,d)\in\omega$ such that $a<c<f<d<b$. We show in this case that there exists $\omega'>\omega$ such that $\omega'$ is singular in $G_\sigma$.

\item[(b)] There exists $(c,d)\in S$ crossing $(a,b)$. Again, we show in this case that there exists $\omega'>\omega$ such that $\omega'$ is singular in $G_\sigma$.

\item[(c)] There exists $(c,d)\in S$ and exists $p\in \omega^0\backslash[a,b]$ with $c<p<d$. Again, we show in this case that there exists $\omega'>\omega$ such that $\omega'$ is singular in $G_\sigma$.
\end{itemize}
Since all three cases are shown in the same way, we will consider in detail only case (a).
Assume first that there exists $(c,d)\in\omega$ such that $a<c<f<d<b$. By Proposition \ref{prop3.20} and minimality of $(a,b)$ one has that exactly one out of
$\omega_{(c,d)}^-(c,f),\, \omega_{(c,d)}^-(f,d)$ is in $G_\sigma.$ Without loss of generality we can assume that this is $\upsilon=\omega_{(c,d)}^-(c,f)$.
One has
\begin{center}
\begin{picture}(130,85)(80,-20)
\setlength{\unitlength}{.92pt}
\put(20,35){$\upsilon=$}
\multiput(50,40)(20,0){5} {\circle*{3}} 
\put(48,25){$a$}\put(68,25){$c$}\put(88,25){$f$}\put(108,25){$d$}\put(128,25){$b$}
\qbezier(50,40)(90,80)(130,40)\qbezier(70,40)(80,60)(90,40)
\put(-60,-5){$\upsilon_{(a,b)}^-(a,d)=$}
\multiput(10,0)(20,0){5} {\circle*{3}} 
\put(8,-15){$a$}\put(28,-15){$c$}\put(48,-15){$f$}\put(68,-15){$d$}\put(88,-15){$b$}
\qbezier(10,0)(40,40)(70,0)\qbezier(30,0)(40,30)(50,0)
\put(120,-5){$\upsilon_{(a,b)}^-(d,b)=$}
\multiput(190,0)(20,0){5} {\circle*{3}} 
\put(188,-15){$a$}\put(208,-15){$c$}\put(228,-15){$f$}\put(248,-15){$d$}\put(268,-15){$b$}
\qbezier(210,0)(220,30)(230,0)\qbezier(250,0)(260,30)(270,0)
\end{picture}
\end{center}
Again, one of $\upsilon_{(a,b)}^-(a,d),\, \upsilon_{(a,b)}^-(d,b)$ is in $G_\sigma.$
\begin{itemize}
\item If $\upsilon_{(a,b)}^-(a,d)\in G_\sigma$ recall that $\omega'=\omega_{(a,b)}^-(f,b)<\sigma$ so that
\begin{center}
\begin{picture}(130,100)(80,-20)
\setlength{\unitlength}{.92pt}
\put(10,35){$\omega'=$}
\multiput(50,40)(20,0){5} {\circle*{3}} 
\put(48,25){$a$}\put(68,25){$c$}\put(88,25){$f$}\put(108,25){$d$}\put(128,25){$b$}
\qbezier(70,40)(90,80)(110,40)\qbezier(90,40)(110,80)(130,40)
\put(-90,0){${}_{(\omega')_{(c,d),(f,b)}^-(c,f)(d,b)=}$}
\multiput(10,0)(20,0){5} {\circle*{3}} 
\put(8,-15){$a$}\put(28,-15){$c$}\put(48,-15){$f$}\put(68,-15){$d$}\put(88,-15){$b$}
\qbezier(30,0)(40,30)(50,0)\qbezier(70,0)(80,30)(90,0)
\put(120,0){${}_{(\omega')_{(c,d),(f,b)}^-(c,b)(f,d)=}$}
\multiput(230,0)(20,0){5} {\circle*{3}} 
\put(228,-15){$a$}\put(248,-15){$c$}\put(268,-15){$f$}\put(288,-15){$d$}\put(308,-15){$b$}
\qbezier(250,0)(280,50)(310,0)\qbezier(270,0)(280,30)(290,0)
\end{picture}
\end{center}
If $(\omega')_{(c,d),(f,b)}^-(c,f)(d,b)<\sigma$ then we get that $\upsilon$ satisfies $\upsilon>\omega$ and both predecessors $\upsilon_{(a,b)}^-(a,d),\,
\upsilon_{(a,b)}^-(d,b)<\sigma$ so that $\omega\not\in Sing(\sigma).$ \\
If $(\omega')_{(c,d),(f,b)}^-(c,b)(f,d)<\sigma$ then since $\omega_{(c,d)}^-(f,d)<(\omega')_{(c,d),(f,b)}^-(c,b)(f,d)$ we get that both predecessors of $\omega$ connected to $(c,d)$ and $f$ are in the graph, in contradiction with the assumption.
\item If $\upsilon_{(a,b)}^-(d,b)<\sigma$ recall that $\omega'=\omega_{(a,b)}^-(a,f)<\sigma$ so that
\begin{center}
\begin{picture}(130,100)(80,-20)
\setlength{\unitlength}{.92pt}
\put(10,35){$\omega'=$}
\multiput(50,40)(20,0){5} {\circle*{3}} 
\put(48,25){$a$}\put(68,25){$c$}\put(88,25){$f$}\put(108,25){$d$}\put(128,25){$b$}
\qbezier(70,40)(90,80)(110,40)\qbezier(50,40)(70,80)(90,40)
\put(-90,0){${}_{(\omega')_{(a,f),(c,d)}^-(a,d)(c,f)=}$}
\multiput(10,0)(20,0){5} {\circle*{3}} 
\put(8,-15){$a$}\put(28,-15){$c$}\put(48,-15){$f$}\put(68,-15){$d$}\put(88,-15){$b$}
\qbezier(30,0)(40,30)(50,0)\qbezier(10,0)(40,50)(70,0)
\put(120,0){${}_{(\omega')_{(a,f),(c,d)}^-(a,c)(f,d)=}$}
\multiput(230,0)(20,0){5} {\circle*{3}} 
\put(228,-15){$a$}\put(248,-15){$c$}\put(268,-15){$f$}\put(288,-15){$d$}\put(308,-15){$b$}
\qbezier(230,0)(240,30)(250,0)\qbezier(270,0)(280,30)(290,0)
\end{picture}
\end{center}
Exactly as in the first case,\\
if $(\omega')_{(a,f),(c,d)}^-(a,d)(c,f)$ then $\omega<\upsilon$ satisfying $\upsilon_{(a,b)}^-(a,d),\, \upsilon_{(a,b)}^-(d,b)<\sigma$
so that $\omega\not\in Sing(\sigma);$\\
if $(\omega')_{(a,f),(c,d)}^-(a,c)(f,d)<\sigma$ then both predecessors of $\omega$ connected to $(c,d)$ and $f$ are in the graph, in contradiction with the assumption.
\end{itemize}
So there cannot be $(c,d)\in\omega$ such that $a<c<f<d<b$.

Now, let us show that for $(c,d)\in S$ one has $\omega_{(c,d)}^-(c,f),\,\omega_{(c,d)}^-(f,d)<\sigma$. Indeed, since $\omega_{(a,b)}^-(a,f)<\sigma$  one of its predecessors $\omega_{(a,b),(c,d)}^-(a,f)(c,b),\, \omega_{(a,b),(c,d)}^-(a,f)(b,d)$ is in $G_\sigma$. Exactly in the same way, since $\omega_{(a,b)}^-(f,b)<\sigma$ one of its predecessors $\omega_{(a,b),(c,d)}^-(f,b)(c,a),\, \omega_{(a,b),(c,d)}^-(f,b)(a,d)$
is in $G_\sigma.$ Note that
\begin{itemize}
\item[(i)]  $\omega_{(c,d)}^-(c,f)<\omega_{(a,b),(c,d)}^-(a,f)(c,b)$ so that if $\omega_{(a,b),(c,d)}^-(a,f)(c,b)\leq \sigma$ then $\omega_{(c,d)}^-(c,f)< \sigma$;
\item[(ii)]  $\omega_{(c,d)}^-(f,d)<\omega_{(a,b),(c,d)}^-(a,f)(b,d)$ so that if $\omega_{(a,b),(c,d)}^-(a,f)(b,d)\leq \sigma$ then $\omega_{(c,d)}^-(f,d)<\sigma$;
\item[(iii)] $\omega_{(c,d)}^-(c,f)<\omega_{(a,b),(c,d)}^-(c,a)(f,b)$ so that if $\omega_{(a,b),(c,d)}^-(c,d)(f,b)\leq \sigma$ then $\omega_{(c,d)}^-(c,f)<\sigma$;
\item[(iv)] $\omega_{(c,d)}^-(f,d)<\omega_{(a,b),(c,d)}^-(a,d)(f,b)$ so that if $\omega_{(a,b),(c,d)}^-(a,d)(f,b)\leq\sigma$ then $\omega_{(c,d)}^-(f,d)<\sigma$;
\end{itemize}
Thus if either $\omega_{(a,b),(c,d)}^-(a,f)(c,b),\, \omega_{(a,b),(c,d)}^-(a,d)(f,b)\in G_\sigma$ or\\  $\omega_{(a,b),(c,d)}^-(a,f)(b,d),\, \omega_{(a,b),(c,d)}^-(c,a)(f,b)\in G_\sigma$ then both $\omega_{(c,d)}^-(c,f),\,\omega_{(c,d)}^-(f,d)<\sigma$. Let us show that it cannot occur that
either $$\omega_{(a,b),(c,d)}^-(a,f)(c,b),\, \omega_{(a,b),(c,d)}^-(c,a)(f,b)\leq \sigma$$ or $$\omega_{(a,b),(c,d)}^-(a,f)(b,d),\, \omega_{(a,b),(c,d)}^-(a,d)(f,b)\leq \sigma.$$
Indeed, if
 $$\omega_{(a,b),(c,d)}^-(a,f)(c,b),\, \omega_{(a,b),(c,d)}^-(c,a)(f,b)\leq \sigma$$
then $\omega_{(c,d)}^-(c,f)\in G_\sigma$ and both
 $$\omega_{(a,b),(c,d)}^-(a,f)(c,b),\, \omega_{(a,b),(c,d)}^-(c,a)(f,b)$$
are its predecessors, obtained from ``uncrossing'' $(c,f),(a,b)$ so that $\mathcal F_{\omega_{(a,b),(c,d)}^-(c,f)(a,b)}$ is singular in $\mathcal F_\sigma$ but $\omega_{(c,d)}^-(c,f)$ is a predecessor of $\omega$ , which contradicts to $\omega\in Sing(\sigma)$.
Exactly in the same way if $\omega_{(a,b),(c,d)}^-(a,f)(b,d),\, \omega_{(a,b),(c,d)}^-(a,d)(f,b)\leq \sigma$ then $\omega_{(c,d)}^-(f,d)\in G_\sigma$ and both $\omega_{(a,b),(c,d)}^-(a,f)(b,d),\, \omega_{(a,b),(c,d)}^-(a,d)(f,b)$ are its predecessors obtained from ``uncrossing'' $(a,b),(f,d)$ which again contradicts to
$\omega\in Sing(\sigma)$.

Note that in our proof that $(c,d)\in S$ cannot cross $(a,b)$ we used only fact that both $$\omega_{(a,b)}^-(a,f),\omega_{(a,b)}^-(f,b)<\sigma$$
but not its minimality, so that this is true for any $(i,j),(i',j')\in S$ and all of them are concentric.

Consider $f\ne p\in\omega^0\cap[a,b]$, let us show that $\omega_{(a,b)}^-(a,p)$, $\omega_{(a,b)}^-(p,b)<\sigma$ and $\omega_{(a,b)}^-(p,f)\not<\sigma$.
We can assume that $p<f$ then since $\omega_{(a,b)}^-(p,b)<\omega_{(a,b)}^-(f,b)$ we get that  $\omega_{(a,b)}^-(p,b)<\sigma.$ Now since $a<p<f$ one has either
$\omega_{(a,b)}^-(a,p)<\sigma$ or $\omega_{(a,b)}^-(p,f)<\sigma$. If $\omega_{(a,b)}^-(p,f)<\sigma$ then for $\omega'=\omega_{(a,b)}^-(p,b)$ both predecessors
$(\omega')_{(p,b)}-(p,f)$, $(\omega')_{(p,b)}-(f,b)<\sigma$ so that $\mathcal F_{\omega'}$ is singular in $\mathcal F_\sigma$ in contradiction with $\omega\in Sing(\sigma)$.

To finish with (ii), let us show that if there exists $(c,d)\in S$ and $p\in\omega^0$ such that $p\in [c,d]\backslash[a,b]$, then $\omega\not\in Sing(\sigma)$. Without loss of generality, we can assume that $p\in(c,a)$. Then by Proposition \ref{prop3.20} either $\omega^-_{(c,d)}(c,p)$ or $\omega^-_{(c,d)}(p,d)$ is in $G_\sigma$.
\begin{itemize}
\item[(a)] If $\omega^-_{(c,d)}(c,p)\in G_\upsilon$, then noting that both $\omega^-_{(a,b)}(a,f)$ and $\omega^-_{(a,b)}(f,b)$ are in $G_\sigma$, we get that $\sigma>\omega^-_{(c,d)(a,b)}(c,p)(a,f),\omega^-_{(c,d)(a,b)}(c,p)(f,b)$ so that $\omega^-_{(c,d)}(c,p)$ is singular in $G_\upsilon$ and $\omega>\omega^-_{(c,d)}(c,p)$ is not in $Sing(\sigma)$.
\item[(b)] If $\omega^-_{(c,d)}(p,d)\in G_\upsilon$, then noting that on one hand  $\omega^-_{(c,d)}(f,d)=(\omega^-_{(c,d)}(p,d))^-_{(p,d)}(f,d)\in G_\sigma$. On the other hand, one of the predecessors of $\omega^-_{(c,d)}(c,f)$ connected to $p$ must be in $G_\sigma$. Note that $(\omega^-_{(c,d)}(c,f))^-_{(c,f)}(c,p)=\omega^-_{(c,d)}(c,p)$ so we return to case (a), or $(\omega^-_{(c,d)}(c,f))^-_{(c,f)}(p,f)=(\omega^-_{(c,d)}(p,d))^-_{(p,d)}(p,f)$ so that both predecessors of $\omega^-_{(c,d)}(p,d)$ connected to $f$ are in $G_\sigma$, which implies $\omega\not\in Sing(\sigma)$, again.
\end{itemize}

(iii) Note first of all that if $(i,j)\not\in S$, then $(i,j)$ cannot be a bridge over $p\in\omega^0\cap[a,b]$, by (ii). Now, again, we have to consider the following cases:
\begin{itemize}
\item[(a)] there is $p\in\omega^0\backslash[a,b]$ and there is $(i,j)\in\omega$ such that $i<p<j$. In this case, we show that there exists $\omega'>\omega$ such that $\omega'$ is singular in $G_\sigma$.

\item[(b)] there exist $(i,j),(i',j')\in\omega\backslash S$ such that $i<i'<j<j'$. Again, in this case, we show that there exists $\omega'>\omega$ such that $\omega'$ is singular in $G_\sigma$.

\item[(c)] there is $(i,j)\not\in S$ and $(c,d),(a,b)\in S$ with $c<a<b<d$ such that either $(i,j)$ crosses $(c,d)$ but not $(a,b)$ or $(i,j)$ crosses $(a,b)$ but not $(c,d)$. In this case, we show that there exists $\omega'>\omega$ such that $\omega'$ is singular in $G_\sigma$.
\end{itemize}
Exactly as in (ii) all the cases are analyzed in the same way, so we show only case (a).
Assume that there is a fixed point $p$ under $(i,j)\in\omega\backslash S$. Since any fixed point at $[a,b]$ has the same role as $f$, and there are no fixed points under $(c,d)\in S$ such that they are not at $[a,b]$, by (ii) we get that $p$ is not under any arc of $S$. Thus, without loss of generality, we can assume that $i<p<c$ for any $(c,d)\in S$.

As in the proof of (ii), we see that at least one of
$$\omega^-_{(i,j)}(i,p),\omega^-_{(i,j)}(p,j)$$
is smaller than $\sigma$. We have either $i<p<j<a$ or $i<p<a<j<f$, where $f$ is the most left fixed point at $[a,b]$. So let us consider these two cases:
\begin{itemize}
\item[(a1)] Case $i<p<j<a$. We have
$$\omega^-_{(a,b)}(a,f),\omega^-_{(a,b)}(f,b)\in G_\sigma$$
and either $\omega^-_{(i,j)}(i,p)$ or $\omega^-_{(i,j)}(p,j)$ in $G_\sigma$. Note that
$$(R_\sigma)_{i,p}\geq (R_\omega)_{i,p}+1\mbox{ or }(R_\sigma)_{p,j}\geq (R_\omega)_{p,j}+1$$
and
$$(R_\sigma)_{a,f}\geq (R_\omega)_{a,f}+1\mbox{ and }(R_\sigma)_{f,b}\geq (R_\omega)_{f,b}+1$$
and $[i,p]\cap[a,f]=\emptyset$, $[p,j]\cap[a,f]=\emptyset$, respectively. Thus
$$\upsilon=\left\{
\begin{array}{l}
\omega_0(i,p)(a,f),\omega_0(i,p)(f,b)\in G_\sigma,\\
\qquad\qquad\qquad\mbox{if }\omega^-{(i,j)}(i,p)\in G_\sigma,\\
\omega_0(p,j)(a,f),\omega_0(p,j)(f,b)\in G_\sigma,\\
\qquad\qquad\qquad\mbox{if }\omega^-_{(i,j)}(p,j)\in G_\sigma.
\end{array}\right.$$
So $\omega^-_{(i,j)}(i,p)$ or $\omega^-_{(i,j)}(p,j)$ is singular in $G_\sigma$, respectively. Hence, $\omega\not\in Sing(\sigma)$.

\item[(a2)] Case $i<p<a<j<f$. If $\omega^-_{(i,j)}(i,p)\in G_\sigma$, then exactly as in Case (a1), we get that $\omega^-_{(i,j)}(i,p)$ is singular in $G_\sigma$, so that $\omega\not\in Sing(\sigma)$.
    Now, let $\omega^-_{(i,j)}(p,j)\in G_\sigma$:
\begin{center}
\begin{picture}(60,15)
\put(0,0){\circle*{2}}\put(10,0){\circle*{2}}\put(20,0){\circle*{2}}\put(30,0){\circle*{2}}
\put(40,0){\circle*{2}}\put(50,0){\circle*{2}}
\qbezier(0,0)(15,20)(30,0)
\qbezier(20,0)(35,20)(50,0)
\put(-1,-9){$i$}\put(8,-9){$p$}\put(18,-9){$a$}\put(28,-9){$j$}\put(38,-9){$f$}\put(48,-9){$b$}
\end{picture}
\end{center}
Note that $R_\omega=R_0+R_{(i,j)(a,b)}$ and the corresponding part of $R_{(i,j)(a,b)}$ at $[i,b]$ (not taking into account other entries) is

\begin{align*}
R_1&=\tilde{R}_{(i,j)(a,f)}=
\begin{array}{lllllll}
 &i&p&a&j&f&b\\
i&0&0&0&1&2&2\\
p& &0&0&0&1&1\\
a& & &0&0&1&1\\
j& & & &0&0&0\\
f& & & & &0&0\\
b& & & & & &0
\end{array}
\qquad
R_3=\tilde{R}_{(p,j)(a,b)}=
\begin{array}{lllllll}
&i&p&a&j&f&b\\
i&0&0&0&1&1&2\\
p&&0&0&1&1&2\\
a&&&0&0&0&1\\
j&&&&0&0&0\\
f&&&&&0&0\\
b&&&&&&0
\end{array}\\[10pt]
R_2&=\tilde{R}_{(i,j)(f,b)}=
\begin{array}{lllllll}
&i&p&a&j&f&b\\
i&0&0&0&1&1&2\\
p&&0&0&0&0&1\\
a&&&0&0&0&1\\
j&&&&0&0&1\\
f&&&&&0&1\\
b&&&&&&0
\end{array}
\end{align*}
Now, let $(M)_{s,t}=\max\{(R_1)_{s,t},(R_3)_{s,t}\}$, we get
\begin{align*}
M&:=\tilde{R}_{(i,j)(f,b)}=
\begin{array}{lllllll}
&i&p&a&j&f&b\\
i&0&0&0&1&2&2\\
p&&0&0&1&1&2\\
a&&&0&0&1&1\\
j&&&&0&0&0\\
f&&&&&0&0\\
b&&&&&&0
\end{array}
\end{align*}
is not a rank matrix but since $M_{p,j}=1$ and $M_{a,f}=1$, we get that $\omega_0(p,j)(a,f)\in G_\sigma$.

Now, let $(N)_{s,t}=\max\{(R_2)_{s,t},(R_3)_{s,t}\}$, we get
\begin{align*}
N&:=\tilde{R}_{(i,j)(f,b)}=
\begin{array}{lllllll}
&i&p&a&j&f&b\\
i&0&0&0&1&1&2\\
p&&0&0&1&1&2\\
a&&&0&0&0&1\\
j&&&&0&0&1\\
f&&&&&0&1\\
b&&&&&&0
\end{array}
\end{align*}
where $N$ is a rank matrix of $\omega_0(f,b)(p,j)$ so it is in $G_\sigma$. So, $\omega^-_{(i,j)}(p,j)$ is in $G_\sigma$ and $\omega\not\in Sing(\sigma)$.
\end{itemize}
This completes the proof.
\end{proof}

As a corollary we get the following proposition
\begin{proposition}\label{prop3.24}
Let $\sigma\in I_{n,k}^{\max}$ be such that $1,n\in \sigma^0$ and
$|\tau^*(\sigma)|\geq 2$. Let $\omega\in Sing(\sigma)$. If there
exists $f\in \omega^0$ and $(i,j)\in\omega$ such that
$\omega_{(i,j)}^-(i,f),\omega_{(i,j)}^-(f,j)<\sigma$ then $\omega$
is obtained by our algorithm.
\end{proposition}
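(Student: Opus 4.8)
The plan is to read off the shape of $\omega$ from Lemma~\ref{l3.23} and then exhibit the precise data---a pair $\{i,j\}\in\tau^*(\sigma)$, the associated arcs $(A,B),(C,D)$, and an interval $[r,q]$---on which step (b) of the algorithm of \S\ref{sec2.4} operates to produce it. Since $1,n\in\sigma^0$, step (a) gives $\widehat\sigma=\sigma$ and $s=1$, $t=n$, so only step (b) is in play; and by Corollary~\ref{main} the hypothesis on $f$ and $(i,j)$ says exactly that $\omega$ is singular in $\mathcal F_\sigma$ through a bridge rather than through a crossing.

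First I would fix, as in Lemma~\ref{l3.23}, the minimal arc $(a,b)\in\omega$ over $f$ and the concentric nest $S=\{(p,q)\in\omega:p<f<q\}$, together with the three structural facts of that lemma: every fixed point of $\omega$ lies in $[a,b]$ and each of them again has both backward moves below $\sigma$; the arcs of $S$ are concentric with innermost $(a,b)$; and the arcs of $\omega\setminus S$ are pairwise non-crossing, cover no fixed point, and each cross either all of $S$ or none. Using $\omega_{(a,b)}^-(a,f)<\sigma$, $\omega_{(a,b)}^-(f,b)<\sigma$ and climbing the nest $S$ by repeated use of Proposition~\ref{prop3.20}, I would reverse the basic construction of Proposition~\ref{prop1}: the outermost two arcs of $S$ are a pair of arcs of the form adjoined in that construction (an outer arc, which will be the $(r,q)$ of step (b), and the arc $(A,D)$), while two of the arcs of $\sigma$ lying in the span $[A,D]$---together with the nests above their innermost members $(i,i+1)$ and $(j,j+1)$---reassemble into the arcs $(A,B)$ and $(C,D)$ of $\sigma$, with $\{i,j\}\in\tau^*(\sigma)$. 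That $(A,B),(C,D)$ are the \emph{maximal} such arcs subject to the $\tau^*$-freeness conditions, and that no arc of $\sigma$ lies over both of them, follows from parts (ii)--(iii) of Lemma~\ref{l3.23}; matching the count of concentric arcs in $S$ against the tangent-space formula of Proposition~\ref{prop1} then pins down the admissibility condition ($kr=0$ in the notation of \S\ref{3.1}), which is precisely what step (b) encodes.

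The decisive and most delicate step is the identification of the interval. Setting $m_1=\max\{g\in\sigma^0:g<A\}$ and $m_2=\min\{g\in\sigma^0:g>D\}$, I would show that $\omega$ agrees with $\sigma$ on $[1,m_1-1]$ and on $[m_2+1,n]$ and that the endpoints of the outermost arc of $S$ in $\omega$ are exactly an admissible pair of endpoints $r\in I$, $q\in J$ as computed from $\sigma$ in step (b); this forces $\omega=\upsilon_1\widehat\omega\upsilon_2$ with $\widehat\omega$ the prescribed $(l,r)$-completion of $\pi_{r,q}(\upsilon)_{(A,B),(C,D)}^-(A,D)(r,q)$, i.e.\ the output $\omega([r,q])$ of the algorithm. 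The main obstacle is precisely the case analysis built into step (b): whether $\tau^*(\sigma)$ meets $[m_1,A]\cup[D,m_2]$ (the single interval $[m_1,m_2]$ versus the whole family of intervals $[r,q]$, $r\in I$, $q\in J$), and, within the non-empty case, the half-open variants in which only one fixed point is adjoined. I would dispatch this by induction on $|\tau^*(\sigma)\cap([m_1,A]\cup[D,m_2])|$, peeling the extreme $\tau^*$-points off one at a time by Lemma~\ref{lemmatex2} and Proposition~\ref{prop3.17} and using Proposition~\ref{prop3.20} to keep track of which backward moves survive, thereby reducing to the base case $\tau^*(\sigma)\cap([m_1,A]\cup[D,m_2])=\emptyset$, where the identity $\omega=\upsilon_1\big(\upsilon_{(A,B)(C,D)}^-(A,D)(m_1,m_2)\big)\upsilon_2$ follows directly from Proposition~\ref{prop1} and the structure lemma.
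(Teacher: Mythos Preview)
Your plan diverges substantially from the paper's argument, and it contains a real gap in the identification step.

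The paper does not attempt to read off the algorithm data $(A,B),(C,D),[r,q]$ from the nest $S$ at all. Instead it runs a straightforward induction on $n$, whose engine is Lemma~\ref{lemma22}: once one shows (from $\omega\in Sing(\sigma)$ and minimality of $(a,b)$) that $\pi_{a+1,b-1}(\omega)=\pi_{a+1,b-1}(\sigma)$, then whenever this common restriction is nonempty there is some $(i,i+1)\in\sigma\cap\omega$, and Lemma~\ref{lemma22} lets one excise that arc and descend to $n-2$. The residual cases with $\pi_{a+1,b-1}(\sigma)=\emptyset$, i.e.\ $\pi_{a,b}(\sigma)=(a,a+1)(b-1,b)$, are then disposed of by a short direct check. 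You never invoke Lemma~\ref{lemma22}; the tools you cite for your induction, Lemma~\ref{lemmatex2} and Proposition~\ref{prop3.17}, concern completions and concatenations and do not ``peel off'' an interior $\tau^*$-point from both $\sigma$ and $\omega$ simultaneously. Without Lemma~\ref{lemma22} (or an equivalent) your induction has no mechanism to shrink the problem.

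There is also a concrete error in your structural reading of $S$. You assert that the outermost two arcs of $S$ are $(r,q)$ and $(A,D)$. In fact, in the algorithm's output the nest $S$ is, from innermost to outermost, $(A,D)$, then the $R$-arcs of $\pi_{r,q}(\sigma)$ (if any), then $(r,q)$, and then the completion arcs; so $(A,D)$ is the \emph{innermost} arc of $S$ (this is the $(a,b)$ of Lemma~\ref{l3.23}), while the outermost arcs come from the $(l,r)$-completion whenever $r>m_1$ or $q<m_2$. Hence ``outermost two $=$ $(r,q)$ and $(A,D)$'' already fails in the first nontrivial completion example, and your ``reversal of Proposition~\ref{prop1}'' does not go through as stated. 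The paper's inductive route via Lemma~\ref{lemma22} avoids this difficulty entirely, since it never needs to locate $(r,q)$ inside $S$.
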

\begin{proof}
We prove it by induction on $n$ starting with $n=6$ and $\sigma=(2,3)(4,5)$, which is known to be true.

Let $(a,b)\in\omega$ be the minimal bridge over $f$.
By Lemma \ref{l3.23} $\pi_{a+1,b-1}(\omega)$ either equal to $\pi_{a+1,b-1}(\sigma)$ or is obtained from $\pi_{a+1,b-1}(\sigma)$ by deleting some external arcs.
Let us show, that if some external arc of $\pi_{a+1,b-1}(\sigma)$ is deleted then $\omega\not\in Sing(\sigma).$ First of all note that $f\in(\pi_{a+1,b-1}\sigma)^0$.
Indeed, if it is not a fixed point then either there exists $(i,f)\in\sigma$ where $i\geq a+1$ or $(f,j)\in\sigma$ where $j\leq b-1.$ We can assume $(i,f)\in\sigma$, but then $i$ is a fixed point of $\omega$ and then $\omega_{(a,b)}^-(i,b)$ satisfies $\omega_{(a,b)}^-(i,f),\omega_{(a,b)}^-(f,b)<\sigma$ in contradiction to $\omega\in Sing(\sigma).$ Thus, $f\in(\pi_{a+1,b-1}(\sigma))^0.$ Now, if $\pi_{a+1,b-1}(\omega)\ne \pi_{a+1,b-1}(\sigma)$ then there exists $(i,j)\in \pi_{a+1,b-1}(\sigma)$ such that $(i,j)\not\in \pi_{a+1,b-1}(\omega)$, and by maximality of  $\pi_{a+1,b-1}(\sigma)$ either $j<f$ or $i>f$. We can assume that $j<f$ and then we get again $\omega_{(a,b)}^-(i,b)$ satisfies $\omega_{(a,b)}^-(i,f),\omega_{(a,b)}^-(f,b)<\sigma$ in contradiction to $\omega\in Sing(\sigma).$ Thus, $\pi_{a+1,b-1}(\omega)=\pi_{a+1,b-1}(\sigma)$. Now consider $\pi_{a,b}(\sigma).$ Since $f$ is a fixed point of  $\pi_{a+1,b-1}(\sigma)$ one has $(a,b)\not\in\pi_{a,b}(\sigma)$.
Moreover, since $\pi_{a+1,f-1}(\sigma)=\pi_{a+1,f-1}(\omega)$ and $R_{a,f}(\omega)=R_{a+1,f-1}(\omega)\leq R_{a,f}(\sigma)-1$ one has by maximality of $\sigma$
that $(a,i)\in\sigma$ where $i=\min(\pi_{a+1,b-1}(\sigma))^0$ and obviously $i\leq f$. Exactly in the same way $(j,b)\in\sigma$ where $j=\max(\pi_{a+1,b-1}(\sigma))^0$
and $j\geq \{f,i+1\}$. Then one has $\pi_{a,b}(\omega)=(\pi_{a,b}(\sigma))^-_{(a,i),(j,b)}(a,b)$.

Moreover if $\pi_{a+1,b-1}(\sigma)\ne\emptyset$ then there exists $(i,i+1)\in\sigma,\omega$ and then by Lemma \ref{lemma22} and by induction $\omega$ is obtained from $\sigma$ by our algorithm.

Thus we are left with the case $\pi_{a,b}(\sigma)=(a,a+1)(b-1,b)$ and $\pi_{a,b}(\omega)=(a,b).$ Further,
\begin{itemize}
\item either $b>a+3$ then if either $(a-1,a+2)\in\sigma$  or $(b-2,b+1)\in\sigma$ we can assume $(a-1,a+2)\in\sigma$ then $\pi_{a-1,b}(\omega)=(a-1,a+1)(a,b)$ and let
$\upsilon=\omega_{(a-1,a+1),(a,b)}^-(a-1,b)(a,a+1)>\omega$ and $\upsilon_{(a-1,b)}^-(a-1,f),\upsilon_{(a-1,b)}(f,b)<\sigma$, so that $\omega\not\in Sing(\sigma)$. Thus, if $b>a+3$ we get that $a-1,b+1\in (\pi_{a-1,b+1}(\sigma))^0$.
If $(a-1,b+1)\in\omega$ then $R_{a-1,b+1}(\omega)=2=R_{a-1,b+1}(\sigma)$ and then $\omega$ is obtained by our algorithm, either by induction since at least one of
$1,n$ is a fixed point of $\omega$ or $a-1=1,\ b+1=n$ and then straightforwardly.
\item or $b=a+3$. If $\sigma=(2,n-1)(3,n-2)\ldots(a-1,a+4)(a,a+1)(a+2,a+3)$ then $\omega=(1,n)\ldots(a-1,a+4)(a,a+3)$ by straightforward computation. Otherwise, there exists $(i,i+1)\in\sigma$ where $i<a$ or $i>b$. We can assume $i<a$. Again a straightforward computation shows either $(i,i+1)\in\omega$ or $1\in\omega^0.$ In both cases we get the result by induction.
\end{itemize}
\end{proof}
 In order to finish the proof we have to show
\begin{proposition}\label{prop3.25}
Let $\sigma\in I_{n,k}^{\max}$ be such that $1,n\in\sigma^0$ and let $\omega\in Sing(\sigma)$ be such that there is no $f\in\omega^0$ and $(a,b)\in\omega$ such that
$$a<f<b\mbox{ and }\omega_{(a,b)}^-(a,f),\omega_{(a,b)}^-(f,b)<\sigma$$ then $b(\omega)=0$. In particular, $1,n\in \omega^0$ and $\pi_{2,n-1}(\omega)\in Sing(\pi_{2,n-1}(\sigma))$ so that $\omega$ is obtained by our algorithm.
\end{proposition}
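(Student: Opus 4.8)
The plan is to observe that the only substantive claim is $b(\omega)=0$; once this is in hand, "$1,n\in\omega^0$", the reduction to $\pi_{2,n-1}$, and the appeal to the algorithm all follow by a routine descent. So I would argue $b(\omega)=0$ by contradiction: assuming $b(\omega)>0$, I will produce a link pattern strictly above $\omega$ and still below $\sigma$ that lies in the singular locus of $\mathcal F_\sigma$, contradicting the fact that $\mathcal F_\omega$ is a \emph{component} of that locus. Since $\rho(\sigma)\geq 4$ and $\omega\in Sing(\sigma)$, Corollary \ref{main} says $\mathcal F_\omega$ becomes singular in $\mathcal F_\sigma$ either through a fixed point lying under an arc or through a crossing pair; the first possibility is excluded by the hypothesis of the proposition, so $\omega$ carries a crossing pair $(c,d),(c',d')\in\omega$ (with $c<c'<d<d'$) both of whose uncrossings are $\leq\sigma$. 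Now assume $b(\omega)>0$ and choose $(a,b)\in\omega$ and $f\in\omega^0$ with $a<f<b$ and $b-a$ minimal, so that no arc of $\omega$ strictly inside $[a,b]$ covers a fixed point. By Proposition \ref{prop3.20} one of $\omega_{(a,b)}^-(a,f),\ \omega_{(a,b)}^-(f,b)$ lies in $G_\sigma$, and by the hypothesis not both do; applying the Sch\"utzenberger (left--right) symmetry to $(\sigma,\omega)$ if necessary, as in the proof of Proposition \ref{prop3.20}, we may assume
$$\upsilon:=\omega_{(a,b)}^-(a,f)\leq\sigma,\qquad \omega_{(a,b)}^-(f,b)\not\leq\sigma.$$

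The heart of the proof is to show that $\mathcal F_\upsilon$ is contained in the singular locus of $\mathcal F_\sigma$; since $\upsilon>\omega$, this contradicts $\omega\in Sing(\sigma)$. In $\upsilon$ the point $b$ has become a fixed point (while $a$ is still an endpoint), so by Corollary \ref{main} it is enough to exhibit a bridge-type or crossing-type double resolution of $\upsilon$ both of whose outcomes are $\leq\sigma$. I would do this by a case analysis on the arcs of $\omega$ around $\{a,f,b\}$ (arcs nesting $(a,b)$, arcs crossing $(a,b)$ on one side, further fixed points below $(a,b)$) and on the position of $(c,d),(c',d')$ relative to $[a,b]$, carried out exactly in the spirit of the proofs of Proposition \ref{prop3.20} and of Lemma \ref{l3.23}(ii)--(iii). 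In each configuration one combines three ingredients: that both uncrossings of $(c,d),(c',d')$ are $\leq\sigma$; that the two-arc resolutions of $\upsilon$ dominate, in the partial order $\leq$, two-arc resolutions of $\omega$ which Proposition \ref{prop3.20} guarantees to be $\leq\sigma$; and the asymmetry $\omega_{(a,b)}^-(f,b)\not\leq\sigma$, which in each relevant instance of Proposition \ref{prop3.20} forces the "$a$-side" alternative to be the one below $\sigma$ and thereby upgrades a single resolution of $\upsilon$ into a double one. For instance, if $(i,j)\in\omega$ with $i<a<b<j$, then $(i,j)$ is a bridge over $b$ in $\upsilon$; one checks the entrywise inequalities $\omega_{(i,j)}^-(i,f)\leq\upsilon_{(i,j)}^-(i,b)$ and $\omega_{(i,j)}^-(f,j)\leq\upsilon_{(i,j)}^-(b,j)$, and then Proposition \ref{prop3.20} applied to $(i,j)$ together with the asymmetry yields $\upsilon_{(i,j)}^-(i,b),\upsilon_{(i,j)}^-(b,j)\leq\sigma$, so $\mathcal F_\upsilon$ is singular in $\mathcal F_\sigma$. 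Running through the remaining cases in the same way contradicts $b(\omega)>0$.

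Granting $b(\omega)=0$, I get $1,n\in\omega^0$ at once: if $1\notin\omega^0$, let $g\geq 3$ be the leftmost fixed point of $\omega$ (it exists since $n>2k$ because $1\in\sigma^0$, and $g\neq 2$ since otherwise the arc at $1$ bridges over $g$); as $b(\omega)=0$, no arc spans $g$, so the points $1,\dots,g-1$ pair up into arcs lying inside $[1,g-1]$, giving $R_{1,g-1}(\omega)=(g-1)/2$, whereas $R_{1,g-1}(\sigma)=R_{2,g-1}(\sigma)\leq\lfloor(g-2)/2\rfloor=(g-3)/2$ because $1\in\sigma^0$, contradicting $\omega\leq\sigma$; the same argument at the right end gives $n\in\omega^0$. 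Hence $\omega=\{1\}\,\pi_{2,n-1}(\omega)\,\{n\}$ and $\sigma=\{1\}\,\pi_{2,n-1}(\sigma)\,\{n\}$, so Proposition \ref{prop3.17} gives $\pi_{2,n-1}(\omega)\in Sing(\pi_{2,n-1}(\sigma))$; moreover $\pi_{2,n-1}(\sigma)\in I_{n-2,k}^{\max}$ and $\rho(\pi_{2,n-1}(\sigma))\geq 4$, since otherwise $\pi_{2,n-1}(\sigma)$ is smooth by Theorem \ref{thm1} and its singular locus is empty. By induction on $n$ (base case $n=6$, $\sigma=(2,3)(4,5)$, where the hypothesis of the proposition holds only vacuously), $\pi_{2,n-1}(\omega)$ is produced by the algorithm applied to $\pi_{2,n-1}(\sigma)$; since $1$ and $n$ are the outermost fixed points of $\sigma$, step (a) of the algorithm leaves $\sigma$ unchanged and the outputs of step (b) that fix both $1$ and $n$ are precisely the patterns $\{1\}\,\widetilde\omega\,\{n\}$ with $\widetilde\omega$ an output of the algorithm on $\pi_{2,n-1}(\sigma)$, so $\omega$ is among the algorithm's outputs.

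The step I expect to be the real obstacle is the case enumeration in the second paragraph: exactly as in Proposition \ref{prop3.20}, the configurations of arcs and fixed points near the minimal bridge $(a,b)$ (and of the crossing pair relative to $[a,b]$) must be listed exhaustively and each closed by an explicit choice of double resolution; the delicate cases are, as there, those in which the small resolutions one obtains are nested rather than crossing, so that one further application of Proposition \ref{prop3.20} to a predecessor of $\upsilon$ is needed, and these are best tracked with rank-matrix computations and pictures in the style used throughout Section~\ref{sec2.4}.
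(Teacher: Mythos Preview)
Your overall plan---prove $b(\omega)=0$, deduce $1,n\in\omega^0$, then reduce to $\pi_{2,n-1}$---is correct and matches the paper, and your final paragraph is essentially the paper's argument for that reduction. The gap is in your second paragraph, where you try to show $b(\omega)=0$ by centering on a minimal bridge $(a,b)$ over $f$ and proving that $\upsilon=\omega^-_{(a,b)}(a,f)$ is itself singular in $\mathcal F_\sigma$. In your worked example (a nesting arc $(i,j)$ with $i<a<b<j$) the inequalities you cite go the wrong way: from $\omega^-_{(i,j)}(i,f)\le\upsilon^-_{(i,j)}(i,b)$ you cannot conclude $\upsilon^-_{(i,j)}(i,b)\le\sigma$ even knowing $\omega^-_{(i,j)}(i,f)\le\sigma$, and Proposition~\ref{prop3.20} applied to $\upsilon$ gives only \emph{one} of $\upsilon^-_{(i,j)}(i,b),\ \upsilon^-_{(i,j)}(b,j)\le\sigma$. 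The ``asymmetry'' $\omega^-_{(a,b)}(f,b)\not\le\sigma$ constrains $R$-entries on intervals containing $[f,b]$ but not $a$, and it is not clear how this pins down the missing resolution of $(i,j)$ over $b$ in $\upsilon$. So the one case you actually work out does not close, and nothing in the sketch indicates how the remaining cases would fare better.

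The paper avoids this by centering the whole argument on the crossing pair $(a,c),(b,d)$ (your $(c,d),(c',d')$) rather than on the minimal bridge. If some $(i,j)\notin\{(a,c),(b,d)\}$ bridges over $f$, then each of the two uncrossings $\omega',\omega''\in G_\sigma$ (both strictly above $\omega$) has \emph{exactly} one of its $(i,j)$-over-$f$ resolutions in $G_\sigma$: at least one by Proposition~\ref{prop3.20}, and not both since that would make $\omega'$ (or $\omega''$) singular above $\omega$. Now compare the two choices. If $\omega'$ and $\omega''$ select the same side, say both $(i,f)$, then $\omega^-_{(i,j)}(i,f)>\omega$ lies in $G_\sigma$ and has both of its $(a,c),(b,d)$-uncrossings in $G_\sigma$, hence is singular---contradiction. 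If they select opposite sides, then both $\omega^-_{(i,j)}(i,f)$ and $\omega^-_{(i,j)}(f,j)$ lie below elements of $G_\sigma$, hence both are $\le\sigma$, contradicting the hypothesis of the proposition. The remaining cases ($f$ lying in $[a,b]$, $[c,d]$, or $[b,c]$) are dispatched by the same two-against-one comparison on the crossing pair. That is the missing mechanism: the crossing pair supplies two independent instances of Proposition~\ref{prop3.20}, and playing them against each other forces a contradiction without ever needing both resolutions of a \emph{single} bridge to be $\le\sigma$.
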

\begin{proof}
Since $\omega\in Sing(\sigma)$ and there is no $f\in\omega^0$ and $(a,b)\in\omega$ such that
$a<f<b$ and $\omega_{(a,b)}^-(a,f),$ $\omega_{(a,b)}^-(f,b)<\sigma$, there exist $(a,c),(b,d)\in\omega$ where $a<b<c<d$ such that $\omega_{(a,c),(b,d)}^-(a,b)(c,d),$ $
\omega_{(a,c),(b,d)}^-(a,d)(b,c)\leq \sigma$ and for both of them $\mathcal F_{\omega_{(a,c),(b,d)}^-(a,b)(c,d)}$, $\mathcal F_{\omega_{(a,c),(b,d)}^-(a,d)(b,c)}$
are smooth in $\mathcal F_\sigma.$ Assume there exists $f\in \omega^0$ and $(i,j)\ne(a,c),(b,d)$ such that $i<f<j$ then
the same is true for  $\omega_{(a,c),(b,d)}^-(a,b)(c,d),\,
\omega_{(a,c),(b,d)}^-(a,d)(b,c)$. Thus exactly one out of $$\omega_{(a,c),(b,d),(i,j)}^-(a,b)(c,d))(i,f),\,\omega_{(a,c),(b,d),(i,j)}^-(a,b)(c,d))(f,j)$$ is in $G_\sigma$ and exactly one out of $$\omega_{(a,c),(b,d),(i,j)}^-(a,d)(b,c))(i,f),\, \omega_{(a,c),(b,d),(i,j)}^-(a,d)(b,c))(f,j)$$ is in $G_\sigma$. If it is the same one, for example $$\omega_{(a,c),(b,d),(i,j)}^-(a,b)(c,d))(i,f),\, \omega_{(a,c),(b,d),(i,j)}^-(a,d)(b,c))(i,f)$$ then
$\omega_{(i,j)}^-(i,f)$ has 2 predecessors in $G_\sigma$ connected to ``uncrossing'' $(a,c),(b,d)$ in contradiction to $\omega\in Sing(\sigma)$
If they are different, for example
$$\omega_{(a,c),(b,d),(i,j)}^-(a,b)(c,d)(i,f),\, \omega_{(a,c),(b,d),(i,j)}^-(a,d)(b,c)(f,j)$$
is in $G_\sigma$ then
\begin{align*}
\omega_{(i,j)}^-(i,f)&<\omega_{(a,c),(b,d),(i,j)}^-(a,b)(c,d))(i,f),\\
\omega_{(i,j)}^-(f,j)&<\omega_{(a,c),(b,d),(i,j)}^-(a,d)(b,c))(f,j)
\end{align*}
so that both predecessors connected to $(i,j)$ and $f$ are in $G_\sigma$ in contradiction with the conditions.

Now assume $a<f<b$, put $\omega'=\omega_{(a,c),(b,d)}^-$ then one of $\omega'(f,b)(c,d), \, \omega'(a,f)(c,d)<\sigma$ and one of
$\omega'(f,d)(b,c),\, \omega'(a,f)(b,c)$ is in $G_\sigma.$
\begin{itemize}
\item If $\omega'(a,f)(c,d),\, \omega'(a,f)(b,c)<\sigma$ then they are both predecessors of $\omega'(a,f)$ $(b,d)>\omega$ in contradiction to $\omega\in Sing(\sigma);$
\item If $\omega'(a,f)(c,d),\, \omega'(f,d)(b,c)<\sigma$ then by maximality of $\sigma$ exactly as in the proof of Lemma \ref{l3.23} $\omega'(f,c)(b,d)$ has 2 predecessors in $G_\sigma$ connected to ``uncrossing'' $(f,c)(b,d)$  in contradiction to $\omega\in Sing(\sigma);$
\item If $\omega'(f,b)(c,d), \omega'(a,f)(b,c)<\sigma$ then again, exactly as in the proof of of Lemma \ref{l3.23} by maximality of $\sigma$ one has $\omega'(f,c)(b,d)$ has two predecessors connected to
``uncrossing'' $(f,c),(b,d)$ in $G_\sigma$  in contradiction to $\omega\in Sing(\sigma);$
\item If $\omega'(f,b)(c,d), \omega'(f,d)(b,c)<\sigma$ then $\omega'(f,c)(b,d)$ has two predecessors connected to
``uncrossing'' $(f,c),(b,d)$ in $G_\sigma$  in contradiction to $\omega\in Sing(\sigma);$
\end{itemize}
Thus there cannot be fixed points at $[a,b],[c,d]$. The last case is there exists $f\in\omega^0\cap[b,c]$. In this case by symmetry it is enough to to consider case
$\omega'(b,f)(a,d)<\sigma.$ Then for $\omega'(a,f)(b,d)$ one has $\omega'(a,b)(f,d)<\omega'(a,b)(c,d)<\sigma$ and $\omega'(a,d)(b,f)<\sigma$ and $\omega'(a,b)(f,d)>\omega$ in contradiction to $\omega\in Sing(\sigma).$

Thus there are no fixed points under arcs of $\omega$. If $1\not\in\omega^0$ then let $i=\min \omega^0$. One has $i$ is odd and $(R_{\omega})_{1,i-1}=0.5(i-1)$.  Since $1$ is fixed point of $\sigma$ one has $(R_{\sigma})_{1,i-1}=(R_{\sigma})_{2,i-1}<0.5(i-1)$ which contradicts $\omega<\sigma$. Thus both $1,n$ are fixed points of $\omega$.
\end{proof}
As an immediate corollary we get
\begin{corollary} For all $\sigma\in I_{n,k}^{\max}$ with $\rho(\sigma)\geq 4$, and $\upsilon<\sigma$
such that ${\rm codim}_{\mathcal F_\sigma}\mathcal Z_\upsilon\leq 3$
one has $\mathcal F_\upsilon$  is smooth in $\mathcal{F}_\sigma$ and
$\upsilon\in Sing(\sigma)$ satisfies
$$\dim \mathcal{T}_{\mathcal{F}_\sigma}(\mathcal{F}_\upsilon)=\dim\mathcal{F}_\sigma+{\rm codim}_{\mathcal{F}_\sigma}(\mathcal{F}_\upsilon)$$
\end{corollary}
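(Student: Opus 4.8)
The plan is to deduce both assertions directly from Theorem~\ref{thm-main}, Theorem~\ref{components} and the reduction statements in Propositions~\ref{prop1}, \ref{prop_compl} and \ref{prop3.17}. The guiding principle is that the two quantities at issue, namely ${\rm codim}_{\mathcal F_\sigma}\mathcal F_\upsilon$ and the tangent defect $\delta(\sigma,\upsilon):=\dim\mathcal T_{F_\upsilon}(\mathcal F_\sigma)-\dim\mathcal F_\sigma$, are unchanged under the operations used to build the elements of $Sing(\sigma)$ --- maximal completion (Proposition~\ref{prop_compl}), concatenation, and in particular insertion of isolated fixed points (Proposition~\ref{prop3.17}) --- so that it suffices to verify the desired identity and the bound ${\rm codim}_{\mathcal F_\sigma}\mathcal F_\upsilon\geq 4$ for the basic admissible-pair case of Proposition~\ref{prop1}.

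First I would dispose of the basic case. Let $(i,j),(i',j')$ be an admissible pair of $\sigma$ at $[1,n]$ and $\upsilon=(1,n)(i,j')\sigma^-_{(i,j)(i',j')}$, with $k,r$ as in Proposition~\ref{prop1}. The identity $(*)$ in the proof of that proposition gives ${\rm codim}_{\mathcal F_\sigma}\mathcal F_\upsilon=4+2(k+r)\geq 4$, while Proposition~\ref{prop1} gives $\delta(\sigma,\upsilon)=(k+1)(2r+2)+2=2kr+2k+2r+4$. Since admissibility forces $kr=0$, the last expression equals $4+2(k+r)$; hence $\delta(\sigma,\upsilon)={\rm codim}_{\mathcal F_\sigma}\mathcal F_\upsilon$, which is the claimed tangent-space identity, and it is automatically $\geq 4$.

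Next I would propagate these two facts. By Theorem~\ref{thm-main} together with Theorem~\ref{components} and Propositions~\ref{prop3.24}--\ref{prop3.25}, every $\upsilon\in Sing(\sigma)$ is obtained from a basic admissible-pair configuration on a subinterval by a finite sequence of maximal completions and concatenations (the insertion of isolated fixed points being a special concatenation). Since both $\delta$ and the codimension are preserved by each such step (Propositions~\ref{prop_compl} and \ref{prop3.17}), the equality $\dim\mathcal T_{F_\upsilon}(\mathcal F_\sigma)=\dim\mathcal F_\sigma+{\rm codim}_{\mathcal F_\sigma}\mathcal F_\upsilon$ and the estimate ${\rm codim}_{\mathcal F_\sigma}\mathcal F_\upsilon\geq 4$ hold for every $\upsilon\in Sing(\sigma)$.

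Finally, for the smoothness statement I would argue by contraposition. By definition the singular locus of $\mathcal F_\sigma$ is the closed, $Z_x$-stable set $\bigcup_{\upsilon\in Sing(\sigma)}\mathcal F_\upsilon$, which by the previous paragraph has codimension at least $4$ in $\mathcal F_\sigma$. If $\upsilon<\sigma$ satisfies ${\rm codim}_{\mathcal F_\sigma}\mathcal Z_\upsilon\leq 3$, then $\mathcal Z_\upsilon$ cannot be contained in this set, since otherwise its closure $\mathcal F_\upsilon$ would be, giving ${\rm codim}_{\mathcal F_\sigma}\mathcal F_\upsilon\geq 4$; and as the singular locus is a union of $Z_x$-orbits while $\mathcal Z_\upsilon$ is a single orbit, $\mathcal Z_\upsilon$ is then disjoint from it, so $F_\upsilon$ is a smooth point of $\mathcal F_\sigma$. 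The only point requiring care is the appeal that every $\upsilon\in Sing(\sigma)$ arises from a basic case by completions and concatenations: this is precisely the combined content of Theorems~\ref{thm-main} and \ref{components}, which I treat as available here, and the rest of the argument is then purely formal bookkeeping with the invariance statements.
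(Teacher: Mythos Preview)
Your argument is correct and is exactly the ``immediate corollary'' the paper has in mind: the key computation is that in the basic case of Proposition~\ref{prop1} one has ${\rm codim}=4+2(k+r)$ and tangent defect $(k+1)(2r+2)+2=2kr+2(k+r)+4$, which coincide precisely because admissibility forces $kr=0$; then the invariance of both quantities under maximal completion (Proposition~\ref{prop_compl}) and concatenation (Proposition~\ref{prop3.17}) carries this over to every $\upsilon\in Sing(\sigma)$ via Theorem~\ref{components}, and the codimension bound $\geq 4$ on all components of the singular locus gives the smoothness claim for $\upsilon$ of codimension $\leq 3$. The paper provides no explicit proof beyond the phrase ``as an immediate corollary,'' and your write-up fills in precisely the intended details.
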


{\bf Remark}: There exists non-maximal $\sigma\in I_{n,k}$ and $\upsilon\in Sing(\sigma)$ such that ${\rm codim}_{\mathcal F_\sigma}\mathcal F_\upsilon=3$, but a straightforward computation shows that for any $\sigma\in I_{n,k}$ and $\upsilon\in G_\sigma$ such that $codim_{\mathcal{F}_\sigma}\mathcal{F}_\upsilon\leq 2$, $\mathcal{F}_\upsilon$ is smooth in $\mathcal{F}_\sigma$. This is known to be true for $\upsilon$ of $codim_{\mathcal{F}_\sigma}\mathcal{F}_\upsilon=1$, see \cite{PS2012}. For
$codim_{\mathcal{F}_\sigma}\mathcal{F}_\upsilon=2$, one can compute this combinatorially.

\section{Remarks on singular locus of $\mathcal F_\sigma$ for $\sigma\in I_{n,k}^{\max}$}\label{sect_remarks}
Note that the number of components depends heavily on the structure of $\sigma$ even for $\sigma\in I^{\max}_{n,k}$ where our algorithm provides all  $\upsilon\in Sing(\sigma).$ We can only note that for  $\sigma\in I_{n,k}^{\max}$ with $\rho(\sigma)\geq 4$ the number of components is greater or equal to $\binom{\rho(\sigma)-2}{2}$.

Indeed, we can assume that $\sigma$ is a link pattern with
$1,n\in\sigma^0$ and $|\tau^*(\sigma)|\geq 2.$  Then, by our
algorithm in the beginning of \S \ref{sec2.4} each pair
$i,j\in\tau^*(\sigma)$   gives rise to the pair of admissible arcs at least at one interval, and for $\{i,j\}\neq\{i',j'\}$ we get different pairs of admissible arcs.

In particular we get
\begin{proposition} For $\sigma\in I_{n,k}^{\max}$ with $\rho(\sigma)>4$ the number of elements in $Sing(\sigma)$ is greater or equal to 3.
\end{proposition}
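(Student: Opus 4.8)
The plan is to read the statement off the bound $|Sing(\sigma)|\ge\binom{\rho(\sigma)-2}{2}$ recorded just above the proposition. Since $\rho(\sigma)$ is a non-negative integer (by its defining formula $\rho(\sigma)=|\tau^*(\sigma)|+\{0,1,2\}$), the hypothesis $\rho(\sigma)>4$ forces $\rho(\sigma)\ge 5$, hence $\rho(\sigma)-2\ge 3$, and therefore $|Sing(\sigma)|\ge\binom{\rho(\sigma)-2}{2}\ge\binom{3}{2}=3$. So the only substantive content is to make the inequality $|Sing(\sigma)|\ge\binom{\rho(\sigma)-2}{2}$ rigorous, and I would organize that as the proof.

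First I would normalize $\sigma$. Using the reductions of \S\ref{gencase} — Proposition \ref{prop3.17} (insertion of fixed points and concatenation), Proposition \ref{prop_compl} (maximal completion) and Lemma \ref{lemma22} (extraction of a minimal arc $(i,i+1)$) — none of which changes the cardinality of $Sing(\cdot)$, and each of which moves $\rho$ in the controlled way recorded in \S\ref{2.1}, I may pass to the $\widehat\sigma$ produced in step (a) of the algorithm of \S\ref{sec2.4}, for which $1,n\in\widehat\sigma^0$. For such $\widehat\sigma$ one has $\rho(\widehat\sigma)=|\tau^*(\widehat\sigma)|+2$, and the reductions preserve $\rho$, so $|\tau^*(\widehat\sigma)|=\rho(\sigma)-2\ge 3$ and $|Sing(\sigma)|=|Sing(\widehat\sigma)|$.

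Next, for each $2$-element subset $\{i,j\}\subseteq\tau^*(\widehat\sigma)$ step (b) of the algorithm attaches a well-defined admissible pair of arcs $(a,b),(c,d)$ — the maximal arc over $(i,i+1)$ with right endpoint $<j$ and $\tau^*(\widehat\sigma)\cap[a,i-1]=\emptyset$, and the maximal arc over $(j,j+1)$ with left endpoint $>i$ and $\tau^*(\widehat\sigma)\cap[j+1,d]=\emptyset$ — admissible on at least one interval, hence a component $\mathcal F_{\upsilon(x)}\in Sing(\widehat\sigma)$ by Theorem \ref{thm-main} and Theorem \ref{components}. The key point is injectivity: distinct $\{i,j\}\ne\{i',j'\}$ give distinct components. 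This I would prove by recovering $\{i,j\}$ from the component: from the explicit formula for $\upsilon(x)$ in Theorem \ref{thm-main} one reads off the two maximal nested arcs $(a,d)$ and the large arc $(s,t)$, hence the admissible arc pair $(a,b),(c,d)$; and since $\widehat\sigma$ is maximal, $(i,i+1)$ is the unique minimal arc under $(a,b)$ lying in $\tau^*(\widehat\sigma)$ that is innermost on the left in the sense of step (b), and symmetrically for $(j,j+1)$ under $(c,d)$. Thus $\{i,j\}\mapsto\mathcal F_{\upsilon(x)}$ is an injection of the $2$-subsets of $\tau^*(\widehat\sigma)$ into $Sing(\widehat\sigma)$, giving $|Sing(\sigma)|=|Sing(\widehat\sigma)|\ge\binom{|\tau^*(\widehat\sigma)|}{2}=\binom{\rho(\sigma)-2}{2}$, which completes the argument.

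The main obstacle is precisely this injectivity bookkeeping: one must verify that two different admissible arc pairs never yield the same $\upsilon(x)$ even when a pair is admissible at several intervals, and that the arc pair attached to $\{i,j\}$ is unambiguously recovered from the component. Both follow from the explicit description in Theorem \ref{thm-main} and from the maximality of $\widehat\sigma$ (which pins down $(a,b)$ and $(c,d)$), but they require running through the four sub-cases of step (b). Once this is in place the proposition is immediate; the same argument in fact gives the sharper bound $|Sing(\sigma)|\ge\binom{\rho(\sigma)-2}{2}$ for every $\sigma\in I_{n,k}^{\max}$ with $\rho(\sigma)\ge 4$.
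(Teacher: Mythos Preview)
Your proposal is correct and follows essentially the same route as the paper. The paper's proof is the short paragraph immediately preceding the proposition: reduce to the case $1,n\in\sigma^0$ so that $|\tau^*(\sigma)|=\rho(\sigma)-2$, and then invoke the algorithm of \S\ref{sec2.4} to note that each two-element subset of $\tau^*(\sigma)$ produces a distinct admissible arc pair and hence a distinct element of $Sing(\sigma)$, giving the bound $\binom{\rho(\sigma)-2}{2}\ge 3$. You spell out more carefully the injectivity step (recovering $\{i,j\}$ from the component) that the paper leaves implicit in the phrase ``we get different pairs of admissible arcs,'' but the substance is the same.
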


In order to describe the picture in the case $\sigma\in I_{n,k}^{\max}$ with $\rho(\sigma)=4$  recall notion of $\sigma_{+a}$, complete link pattern and concatenation from Subsection \ref{maintheorem}.
Let $con(k)=(1,2k)(2,2k-1)\cdots(k,k+1)$ denote a link pattern with $2k$ points and $k$ concentric arcs.\index{$con(k)=(1,2k)(2,2k-1)\cdots(k,k+1)$, Link pattern with $2k$ points and $k$ concentric arcs} Recall that each maximal link pattern can be written as a concatenation of  maximal complete link patterns and fixed points. For instance, the link pattern $(2,3)(5,8)(6,7)(11,16)(12,13)(14,15)\in I_{18,6}$ is maximal and can be written as $(con(1)_{+1})
(con(2)_{+4})(\sigma'_{+10})$, where $con(1)$, $con(2)$ and $\sigma'=(1,6)(2,3)(4,5)$ are maximal complete link patterns.

Let $\sigma=(con(k_1)_{+a_1})(con(k_2)_{+a_2})\cdots(con(k_m)_{+a_m})\in I_{n,s}$ be a maximal link pattern where $a_1\geq1$, $a_i\geq a_{i-1}+2k_{i-1}+1$, $s=\sum_{j=1}^mk_j$ and $n\geq a_m+2k_m+1$, that is the union of sets of concentric arcs where each set is separated from the next one by nonempty set of fixed points and $1,n$ are fixed points. Then the number of components of $\sigma$ is exactly $\binom{m}{2}$. In particular, we get
\begin{proposition} Let $\sigma\in I_{n,k}^{\max}$ be such that $\rho(\sigma)=4$. Then $Sing(\sigma)$ contains the unique element.
\end{proposition}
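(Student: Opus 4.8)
The plan is to reduce to the case $1,n\in\sigma^0$ and then read off $Sing(\sigma)$ from the algorithm of \S\ref{sec2.4}, showing that it outputs exactly one link pattern (one can alternatively invoke directly the last assertion of Theorem \ref{thm-main}, of which this proposition is a special case).

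First I would normalize $\sigma$ by step (a) of the algorithm: the passage $\sigma\rightsquigarrow\widehat\sigma$ of step (a) is built from maximal completions and additions of fixed points, so by Propositions \ref{prop_compl} and \ref{prop3.17} it induces a bijection $Sing(\widehat\sigma)\to Sing(\sigma)$. Inspecting the three cases of step (a) one checks that $\widehat\sigma$ always has both endpoints fixed and $|\tau^*(\widehat\sigma)|=2$; hence $\rho(\widehat\sigma)=|\tau^*(\widehat\sigma)|+2=4$, and it suffices to prove the statement for $\sigma\in I^{\max}_{n,k}$ with $1,n\in\sigma^0$ and $\tau^*(\sigma)=\{i,j\}$, $i<j$.

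Second, by Theorem \ref{components} (Propositions \ref{prop3.24} and \ref{prop3.25}) every element of $Sing(\sigma)$ is produced by step (b) of the algorithm; and since $\tau^*(\sigma)$ has the single two-element subset $\{i,j\}$, step (b) is executed exactly once. It then picks an arc $(a,b)$ which is either strictly over $(i,i+1)$ with $b<j$ — forcing $a<i$ — or, if no such arc exists, the arc $(i,i+1)$ itself — so $a=i$; symmetrically $(c,d)$ is strictly over $(j,j+1)$ with $c>i$, forcing $d>j+1$, or it equals $(j,j+1)$, so $d=j+1$. With $m_1=\max\{f\in\sigma^0:f<a\}$ and $m_2=\min\{f\in\sigma^0:f>d\}$ we thus have $m_1<a\le i<j<d<m_2$. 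The crux of the argument is that the admissible interval for $(a,b),(c,d)$ is unique, whence step (b) yields a single $\omega$. Indeed $j\notin[m_1,a]$ and $i,j\notin[d,m_2]$, so $\tau^*(\sigma)\cap([m_1,a]\cup[d,m_2])\subseteq\{i\}$, and $i$ lies in it only if $a=i$. If $a<i$ the intersection is empty and by the first bullet of step (b) the interval $[m_1,m_2]$ is the only one; if $a=i$ we fall into the ``Otherwise'' branch, but then, using maximality of $\sigma$ together with the fact that any arc over $(i,i+1)$ has right endpoint $>i$, there is no $(s,t)\in\upsilon$ with $s<a<b<t<j$ nor with $i<s<c<d<t$, and the single index $l=i\in\tau^*(\sigma)\cap[m_1,a]$ supports no arc of $\upsilon$ over $(i,i+1)$ with right endpoint $<a=i$; hence $I=\{m_1\}$, $J=\{m_2\}$, and $[m_1,m_2]$ is again the unique admissible interval. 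So $|Sing(\sigma)|\le 1$; combined with the lower bound $|Sing(\sigma)|\ge\binom{\rho(\sigma)-2}{2}=1$ established earlier in this section, we get $|Sing(\sigma)|=1$.

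The step I expect to be the main obstacle is purely the bookkeeping: step (b) of the algorithm has several nested cases and its arcs $(a,b)$, $(c,d)$ may degenerate to the minimal arcs $(i,i+1)$, $(j,j+1)$ themselves, so the degenerate sub-cases must be traced by hand. They all collapse to the single interval $[m_1,m_2]$ for the same reason: when $|\tau^*(\sigma)|=2$ there is no element of $\tau^*(\sigma)$ strictly between $m_1$ and $a$ (other than possibly $i=a$) or between $d$ and $m_2$, so the sets $I$ and $J$ built by the algorithm cannot acquire more than one element.
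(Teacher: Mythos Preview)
Your proof is correct and reaches the same conclusion, but by a genuinely different route from the paper. The paper argues structurally: after passing to $\sigma'$ with endpoints fixed it observes that a maximal link pattern with $|\tau^*|=2$ must be one of two explicit shapes --- either two separate nests of concentric arcs $(con(k_1)_{+a_1})(con(k_2)_{+a_2})$ with fixed points between and around them, or two nests sitting under a common stack of $k_3$ outer concentric arcs --- and in each shape it simply names the unique admissible pair and its unique interval. You instead trace step~(b) of the algorithm abstractly: with $\tau^*(\widehat\sigma)=\{i,j\}$ the pair is forced, and the key observation $\tau^*(\widehat\sigma)\cap([m_1,a]\cup[d,m_2])\subseteq\{i\}$ (with equality only in the degenerate case $a=i$) forces $|I|=|J|=1$, hence a single interval. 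The paper's approach gives explicit formulas for the element of $Sing(\sigma)$; yours avoids the case split on the shape of $\sigma$ but requires chasing the algorithm through the degenerate sub-case where $(a,b)$ collapses to $(i,i+1)$, which you handle correctly by noting that no arc over $(i,i+1)$ can have right endpoint $<i$, so the set $\{q_l\}$ is empty.
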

 \begin{proof} Note that $\sigma$ is a completion of $\sigma'$ where there are only two  possibilities for $\sigma'$:
\begin{itemize}
\item $\sigma'=(con(k_1)_{+a_1})(con(k_2)_{+a_2})\in I_{n,k_1+k_2}$ with $a_1\geq 1$, $a_2\geq 2k_1+a_1+1$ and $n\geq a_2+2k_2+1$:

\begin{figure}[htp]
\begin{center}
\begin{picture}(-140,-30)(150,20)
\setlength{\unitlength}{.92pt}
\multiput(0,0)(10,0){2}{\circle*{3}}\multiput(18,0)(3,0){3}{\circle*{.5}}\put(30,0){\circle*{3}}.
\qbezier(34,0)(57,40)(80,0)\put(34,0){\circle*{3}}\put(39,0){\circle*{3}}
\qbezier(39,0)(57,30)(75,0)\multiput(43,0)(2,0){2}{\circle*{.25}}\put(49,0){\circle*{3}}
\qbezier(49,0)(57,10)(65,0)\multiput(69,0)(2,0){2}{\circle*{.25}}\put(65,0){\circle*{3}}
\put(75,0){\circle*{3}}\put(80,0){\circle*{3}}\put(-5,-10){\tiny$1$}\put(23,-10){\tiny$a_1$}
\put(31,-10){$\underbrace{\,\,\,}_{k_1}$}
\put(90,0){\multiput(0,0)(10,0){2}{\circle*{3}}\multiput(18,0)(3,0){3}{\circle*{.5}}\put(30,0){\circle*{3}}}
\put(100,0){\qbezier(34,0)(57,40)(80,0)\put(34,0){\circle*{3}}\put(39,0){\circle*{3}}
\qbezier(39,0)(57,30)(75,0)\multiput(43,0)(2,0){2}{\circle*{.25}}\put(49,0){\circle*{3}}
\qbezier(49,0)(57,10)(65,0)\multiput(69,0)(2,0){2}{\circle*{.25}}\put(65,0){\circle*{3}}
\put(75,0){\circle*{3}}\put(80,0){\circle*{3}}\put(19,-10){\tiny$a_2$}
\put(31,-10){$\underbrace{\,\,\,}_{k_2}$}}
\put(190,0){\multiput(0,0)(10,0){2}{\circle*{3}}\multiput(18,0)(3,0){3}{\circle*{.5}}\put(30,0){\circle*{3}}}
\put(217,-10){\tiny$n$}
\end{picture}
\end{center}\bigskip\bigskip\bigskip
\end{figure}

In this case the unique admissible pair is $(a_1,2k_1+a_1-1),(a_2,2k_2+a_2-1)$ and it is admissible at the unique interval $[a_1,2k_2+a_2+1].$

\item $\sigma'=(i+1,2(k_1+k_2+k_3)+i)(i+2,2(k_1+k_2+k_3+i-1))\cdots(i+k_3,i+k_3+2(k_1+k_2)+1)(con(k_1)_{+(i+k_3)})(con(k_2)_{+(i+k_3+2k_1)})\in I_{n,k_1+k_2+k_3}$ such that $i\geq 1,$ $k_1,k_2\geq 1$, $k_3\geq 0$ and $n>i+2(k_1+k_2+k_3).$

\begin{figure}[htp]
\begin{center}
\begin{picture}(-140,-60)(120,40)
\setlength{\unitlength}{.92pt}
\put(-25,0){\circle*{3}}\multiput(-21,0)(3,0){4}{\circle*{.25}}\put(-8,0){\circle*{3}}
\multiput(165,0)(15,0){2}{\circle*{3}}\multiput(170,0)(2,0){3}{\circle*{.25}}
\qbezier(0,0)(80,100)(160,0)\put(0,0){\circle*{3}}\put(160,0){\circle*{3}}
\qbezier(5,0)(80,90)(155,0)\multiput(9,0)(2,0){2}{\circle*{.25}}\multiput(149,0)(2,0){2}{\circle*{.25}}
\put(5,0){\circle*{3}}\put(155,0){\circle*{3}}
\qbezier(15,0)(80,80)(145,0)\put(16,0){\circle*{3}}\put(145,0){\circle*{3}}

\multiput(23,0)(7,0){2}{\circle*{3}}\put(43,0){\circle*{3}}\multiput(35,0)(2,0){2}{\circle*{.25}}
\put(62,0){\circle*{3}}\multiput(68,0)(2,0){2}{\circle*{.25}}
\multiput(74,0)(7,0){3}{\circle*{3}}\multiput(92,0)(2,0){4}{\circle*{.25}}\multiput(102,0)(22,0){2}{\circle*{3}}
\multiput(128,0)(2,0){4}{\circle*{.25}}\put(138,0){\circle*{3}}
\qbezier(23,0)(52,40)(81,0)
\qbezier(30,0)(52,30)(74,0)
\qbezier(42,0)(52,17)(62,0)
\qbezier(88,0)(113,40)(138,0)
\qbezier(102,0)(113,17)(124,0)
\put(-9,-10){\tiny$i$}
\put(-27,-10){\tiny$1$}
\put(182,-10){\tiny$n$}
\put(20,-10){$\underbrace{\,\,\,\,\,}_{k_1}$}
\put(-4,-10){$\underbrace{\,\,\,}_{k_3}$}
\put(86,-10){$\underbrace{\,\,}_{k_2}$}
\end{picture}
\end{center}\bigskip\bigskip\bigskip\bigskip\bigskip
\end{figure}

In this case the only admissible pair is $(i+k_3+1,i+k_3+2k_1),\, (i+k_3+2k_1+1,i+k_3+2k_1+2k_2)$ and it is admissible at the unique the interval $[i, i+2(k_1+k_2+k_3)+1]$.
\end{itemize}
\end{proof}

If $\sigma\in I_{n,k}$ is non maximal, the picture is more complex.
We would like to finish the paper with providing an example of a singular $Z_x$-orbit inside a smooth component.

\begin{example} Consider an example in $s\ell_8(\mathbb{C})$: Let $\mathcal{Z}_{\omega}$ be a $Z-$orbit with $\omega=(1,4)(2,7)(3,6)(5,8)$, and it is the component of singularity of $\mathcal{F}_T$ where
$$T=\ytableausetup{mathmode}
 \begin{ytableau}
  1 & 2\\
  3 & 4\\
  5 & 6\\
  7 & 8\\
 \end{ytableau}$$
with the  corresponding link pattern $\sigma_T=(1,2)(3,4)(5,6)(7,8)$. The corresponding admissible pair is $(3,4)(5,6)$ and interval $[2,7]$. Note that $\mathcal{Z}_{\omega}$ is also a $Z-$orbit in $\mathcal{F}_S$, where
$$S=\ytableausetup{mathmode}
 \begin{ytableau}
  1 & 4\\
  2 & 6\\
  3 & 7\\
  5 & 8\\
 \end{ytableau}$$
with the corresponding link pattern $\sigma_S=(1,8)(2,7)(3,4)(5,6)$. Since $\mathcal{F}_S$ is smooth, so $\mathcal{Z}_{\omega}$ is a smooth $\mathcal{Z}$-orbit in it (which is not surprising since the singularity of $Z-$orbit $\mathcal Z'$ in the closure of another $Z-$orbit $\mathcal Z$ depends on both $\mathcal Z$ and $\mathcal Z'$).
However, $\mathcal Z_{\omega}$ is the component of singularity in $\overline{\mathcal{Z}}_\upsilon$ with $\upsilon=(1,7)(2,8)(3,4)(5,6)$. One see at once that (1) $\mathcal{Z}_\upsilon\subset\mathcal{F}_S$ (of $codim_{\mathcal{F}_S}\mathcal{F}_\upsilon=1$), (2) $\mathcal{Z}_\omega\subset\bar{\mathcal{Z}}_\upsilon$, (3) $codim_{\overline{\mathcal{Z}}_\upsilon}\mathcal{Z}_\omega=3$. As for $\dim {\mathcal T}_{F_\omega}(\mathcal F_\upsilon)$ one can see that there are
4 predecessors of $\omega$ in $G_{\upsilon}$, see Figure \ref{figL1L2}.
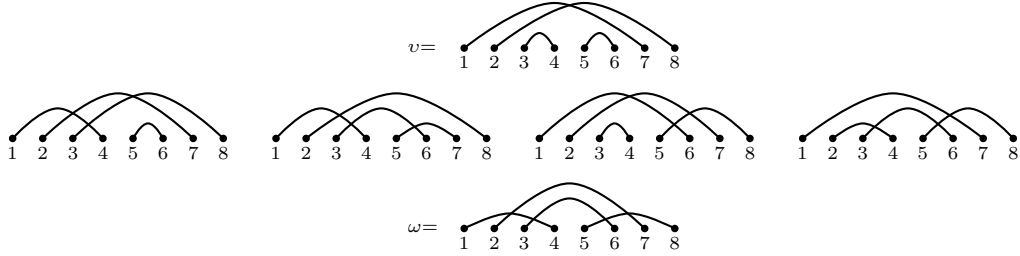
\begin{figure}[htp]
\begin{center}
\begin{pspicture}(0,-2.5)(14,1.2)
\put(6,0){\pscircle*(0,0){0.05}\pscircle*(.4,0){0.05}\pscircle*(.8,0){0.05}
\pscircle*(1.2,0){0.05}\pscircle*(1.6,0){0.05}\pscircle*(2,0){0.05}\pscircle*(2.4,0){0.05}\pscircle*(2.8,0){0.05}
\put(-.07,-.27){\scriptsize$1$}\put(.33,-.27){\scriptsize$2$}\put(.73,-.27){\scriptsize$3$}
\put(1.13,-.27){\scriptsize$4$}\put(1.53,-.27){\scriptsize$5$}\put(1.93,-.27){\scriptsize$6$}
\put(2.33,-.27){\scriptsize$7$}\put(2.73,-.27){\scriptsize$8$}
\pscurve(0,0)(1.2,.6)(2.4,0)\pscurve(.4,0)(1.6,.6)(2.8,0)
\pscurve(.8,0)(1,.2)(1.2,0)\pscurve(1.6,0)(1.8,.2)(2,0)\put(-.75,0){${}_{\upsilon=}$}}
\put(0,-1.2){\pscircle*(0,0){0.05}\pscircle*(.4,0){0.05}\pscircle*(.8,0){0.05}
\pscircle*(1.2,0){0.05}\pscircle*(1.6,0){0.05}\pscircle*(2,0){0.05}\pscircle*(2.4,0){0.05}\pscircle*(2.8,0){0.05}
\put(-.07,-.27){\scriptsize$1$}\put(.33,-.27){\scriptsize$2$}\put(.73,-.27){\scriptsize$3$}
\put(1.13,-.27){\scriptsize$4$}\put(1.53,-.27){\scriptsize$5$}\put(1.93,-.27){\scriptsize$6$}
\put(2.33,-.27){\scriptsize$7$}\put(2.73,-.27){\scriptsize$8$}
\pscurve(0,0)(.6,.4)(1.2,0)\pscurve(.4,0)(1.4,.6)(2.4,0)
\pscurve(.8,0)(1.8,.6)(2.8,0)\pscurve(1.6,0)(1.8,.2)(2,0)}
\put(3.5,-1.2){\pscircle*(0,0){0.05}\pscircle*(.4,0){0.05}\pscircle*(.8,0){0.05}
\pscircle*(1.2,0){0.05}\pscircle*(1.6,0){0.05}\pscircle*(2,0){0.05}\pscircle*(2.4,0){0.05}\pscircle*(2.8,0){0.05}
\put(-.07,-.27){\scriptsize$1$}\put(.33,-.27){\scriptsize$2$}\put(.73,-.27){\scriptsize$3$}
\put(1.13,-.27){\scriptsize$4$}\put(1.53,-.27){\scriptsize$5$}\put(1.93,-.27){\scriptsize$6$}
\put(2.33,-.27){\scriptsize$7$}\put(2.73,-.27){\scriptsize$8$}
\pscurve(0,0)(.6,.4)(1.2,0)\pscurve(.4,0)(1.6,.6)(2.8,0)
\pscurve(.8,0)(1.4,.4)(2,0)\pscurve(1.6,0)(2,.2)(2.4,0)}
\put(7,-1.2){\pscircle*(0,0){0.05}\pscircle*(.4,0){0.05}\pscircle*(.8,0){0.05}
\pscircle*(1.2,0){0.05}\pscircle*(1.6,0){0.05}\pscircle*(2,0){0.05}\pscircle*(2.4,0){0.05}\pscircle*(2.8,0){0.05}
\put(-.07,-.27){\scriptsize$1$}\put(.33,-.27){\scriptsize$2$}\put(.73,-.27){\scriptsize$3$}
\put(1.13,-.27){\scriptsize$4$}\put(1.53,-.27){\scriptsize$5$}\put(1.93,-.27){\scriptsize$6$}
\put(2.33,-.27){\scriptsize$7$}\put(2.73,-.27){\scriptsize$8$}
\pscurve(0,0)(1,.6)(2,0)\pscurve(.4,0)(1.4,.6)(2.4,0)
\pscurve(.8,0)(1,.2)(1.2,0)\pscurve(1.6,0)(2.2,.4)(2.8,0)}
\put(10.5,-1.2){\pscircle*(0,0){0.05}\pscircle*(.4,0){0.05}\pscircle*(.8,0){0.05}
\pscircle*(1.2,0){0.05}\pscircle*(1.6,0){0.05}\pscircle*(2,0){0.05}\pscircle*(2.4,0){0.05}\pscircle*(2.8,0){0.05}
\put(-.07,-.27){\scriptsize$1$}\put(.33,-.27){\scriptsize$2$}\put(.73,-.27){\scriptsize$3$}
\put(1.13,-.27){\scriptsize$4$}\put(1.53,-.27){\scriptsize$5$}\put(1.93,-.27){\scriptsize$6$}
\put(2.33,-.27){\scriptsize$7$}\put(2.73,-.27){\scriptsize$8$}
\pscurve(0,0)(1.2,.6)(2.4,0)\pscurve(.4,0)(.8,.2)(1.2,0)
\pscurve(.8,0)(1.4,.4)(2,0)\pscurve(1.6,0)(2.2,.4)(2.8,0)}
\put(6,-2.4){\pscircle*(0,0){0.05}\pscircle*(.4,0){0.05}\pscircle*(.8,0){0.05}
\pscircle*(1.2,0){0.05}\pscircle*(1.6,0){0.05}\pscircle*(2,0){0.05}\pscircle*(2.4,0){0.05}\pscircle*(2.8,0){0.05}
\put(-.07,-.27){\scriptsize$1$}\put(.33,-.27){\scriptsize$2$}\put(.73,-.27){\scriptsize$3$}
\put(1.13,-.27){\scriptsize$4$}\put(1.53,-.27){\scriptsize$5$}\put(1.93,-.27){\scriptsize$6$}
\put(2.33,-.27){\scriptsize$7$}\put(2.73,-.27){\scriptsize$8$}
\pscurve(0,0)(.6,.2)(1.2,0)\pscurve(.4,0)(1.4,.6)(2.4,0)
\pscurve(.8,0)(1.4,.4)(2,0)\pscurve(1.6,0)(2.2,.2)(2.8,0)\put(-.75,0){${}_{\omega=}$}}
\end{pspicture}
\caption{The link patterns $\omega$ and $\upsilon$, and the predecessors of $\omega$ in $G_{L_S}$.}\label{figL1L2}
\end{center}
\end{figure}

One can also note at once that these are the predecessors of $\omega$ in $G_{\sigma_S}.$
Thus $\dim\,\mathcal{T}_{F_\omega}(\mathcal F_\upsilon)=\dim\mathcal{T}_{F_\omega}(\mathcal{F}_S)=\dim \mathcal{F}_S=\dim \mathcal{Z}_\upsilon+1$.

This example provides us with three facts:
\begin{itemize}
\item A singular component in $\mathcal{Z}$ can be of codimension $3$.
\item $dim\,\mathcal{T}_{\overline{\mathcal{Z}}}(\mathcal{Z'})-\dim\mathcal Z$ can be smaller than ${\rm codim}_{\overline{\mathcal{Z}}}\mathcal{Z}'$
(indeed here it is $\dim\mathcal{Z}+1$).
\item In a smooth component there are singular $\mathcal{Z}$-orbit closures.
\end{itemize}
\end{example}
\section*{Notation}
\begin{tabular}{lp{14cm}}
\S1& $\mathbb{K}$, $V$, $\mathcal{F}$\\
\S1.1& $x$, $\mathcal{F}_x$\\
\S1.2& $\lambda$, $\lambda\vdash n$, $Y(x)=Y_\lambda$, $\lambda^*$, $Tab_\lambda$, $T_{\{i\}}$, $Y_i(T)$, $x\mid_{V_i}$, $\mathcal{V}_x^T$, $\mathcal{F}_T=\overline{\mathcal{V}_x^T}$, $\{\mathcal{F}_T\}_{T\in Tab_\lambda}$\\
\S1.3& $col(i)$, $\tau^*(T)$, $T_i$, $\tau^*(T)$, $|S|$\\
\S1.4& $Z_x$, $S_n$, ${\rm Rank}\,x$, $I_n$, $I_{n,k}$, $\sigma$-basis, $\sigma$-flag, $\sigma_T$, $\mathcal Z_\sigma$, $G_T$\\
\S2.1& $P_\sigma$, $\sigma^0$, $\sigma^\ell$, $\sigma^r$, $c_{\sigma}^r((i,j))$, $c_{\sigma}^l((i,j))$, $c(\sigma))$, $[a,b]$, $b_\sigma((i,j))$, $b(\sigma)$, $b_\sigma(f)$, $S_{[a,b]}$, $\pi_{a,b}(\sigma)$, $R(\sigma)$, $A\leq B$, $\sigma\geq \upsilon$, $Cov(a)$, 
$d_0$, $I_{n,k}^{\max}$\\
\S2.2& $\sigma_{(i_1,j_1),\ldots,(i_s,j_s)}^-$, $\sigma(i,j)=(i,j)\sigma$, $\mathcal V_{G_\sigma}$, $\mathcal E_{G_\sigma}$, $a_{\sigma,\upsilon}$, $\mathcal T_F(\mathcal V)$\\
\S3.1& $Sing(\sigma)$\\
\S3.2& $\sigma_{+a}$, $(s_1,s_2)$-maximal completion, $\sigma\upsilon_{+n}$\\
\S4.2& $\widehat \sigma$, $Gr_d(n)$, $\mathcal{H}_x$, $\mathcal B_n=\{e_i\}_{i=1}^n$, $x*y(e_i)$\\
\S5& $con(k)$
\end{tabular}

\end{document}